\numberwithin{equation}{section}
\newfont{\gd}{eufm10 scaled \magstep1}
\newfont{\gs}{eufm7 scaled \magstep1}
\newfont{\gss}{eufm5 scaled \magstep1}
\newcommand{\Z}{{\mathbb{Z}}}
\newcommand{\N}{{\mathbb{N}}}
\newcommand{\C}{{\mathbb{C}}}
\newcommand{\uloopr}[1]{\ar@'{@+{[0,0]+(-4,5)}@+{[0,0]+(0,10)}@+{[0,0] +(4,5)}}^{#1}}
\newcommand{\uloopd}[1]{\ar@'{@+{[0,0]+(5,4)}@+{[0,0]+(10,0)}@+{[0,0]+ (5,-4)}}^{#1}}
\newcommand{\dloopr}[1]{\ar@'{@+{[0,0]+(-4,-5)}@+{[0,0]+(0,-10)}@+{[0, 0]+(4,-5)}}_{#1}}
\newcommand{\dloopd}[1]{\ar@'{@+{[0,0]+(-5,4)}@+{[0,0]+(-10,0)}@+{[0,0 ]+(-5,-4)}}_{#1}}
\newcommand{\luloop}[1]{\ar@'{@+{[0,0]+(-8,2)}@+{[0,0]+(-10,10)}@+{[0, 0]+(2,2)}}^{#1}}
\newtheorem{lem}{Lemma}[section]
\newtheorem{theor}[lem]{Theorem}
\newtheorem{prop}[lem]{Proposition}
\newtheorem{rema}[lem]{Remark}
\newtheorem{defi}[lem]{Definition}
\newtheorem{exem}[lem]{Example}
\newtheorem{exems}[lem]{Examples}
\newtheorem{qtn}[lem]{Question}
\newtheorem{conjecture}[lem]{Conjecture}
\newtheorem*{nota}{Notation}
\newtheorem{noname}[lem]{}
\begin{document}
\title[K-theoretic characterization of graded isomorphisms]{Towards a K-theoretic characterization of graded isomorphisms between Leavitt path algebras}
\author{P. Ara}
\address{Departament de Matem\`atiques, Facultat de Ci\`encies, Edifici C, Universitat Aut\`onoma de Barcelona,
08193 Bellaterra (Barcelona), Spain.} \email{para@mat.uab.cat}
\author{E. Pardo}
\address{Departamento de Matem\'aticas, Facultad de Ciencias, Universidad de C\'adiz,
Campus de Puerto Real, 11510 Puerto Real (C\'adiz),
Spain.}\email{enrique.pardo@uca.es}
\urladdr{https://sites.google.com/a/gm.uca.es/enrique-pardo-s-home-page/}
\thanks{The first author was partially supported by the DGI
and European Regional Development Fund, jointly, through Project
MTM2011-28992-C02-01. The second author was partially supported by the DGI
and European Regional Development Fund, jointly, through Project
MTM2011-28992-C02-02, and by PAI III grants FQM-298 and P11-FQM-7156 of the
Junta de Andaluc\'{\i}a. Both authors are partially supported by 2009 SGR 1389 grant of the Comissionat per Universitats i Recerca de la Generalitat de Catalunya.} \subjclass[2010]{Primary 16D70; Secondary 46L55} \keywords{Leavitt path
algebra, graph $C^*$-algebra, graded algebra, K-Theory.}
%
%
\begin{abstract} In \cite{Hazrat},
 Hazrat gave a K-theoretic invariant for Leavitt
path algebras as graded algebras. Hazrat conjectured that this
invariant classifies Leavitt path algebras up to graded isomorphism,
and proved the conjecture in some cases. In this paper, we prove
that a weak version of the conjecture holds for all finite essential graphs.

\end{abstract}

\maketitle

\section{Introduction}

Leavitt path algebras of row-finite graphs have been introduced in
\cite{AA1} and \cite{AMFP}. They have become a subject of
significant interest, both for algebraists and for analysts working
in C*-algebras. The Cuntz-Krieger algebras $C^*(E)$ (the C*-algebra
counterpart of these Leavitt path algebras) are described in
\cite{Raeburn}. While sharing some striking similarities, the
algebraic and analytic theories present some remarkable differences,
as has been shown for instance in \cite{Tomforde}, \cite{AP},
\cite{AC} (see also \cite{Work}).

For a field $K$, the algebras $L_K(E)$ are natural generalizations of the algebras investigated by Leavitt in \cite{Le}, and are a specific type of
path $K$-algebras associated to a graph $E$ (modulo certain relations). The family of algebras which can be realized as the Leavitt path algebras of
a graph includes matrix rings ${\mathbb M}_n(K)$ for $n\in \mathbb{N}\cup \{\infty\}$ (where ${\mathbb M}_\infty(K)$ denotes matrices of countable
size with only a finite number of nonzero entries), the Toeplitz algebra, the Laurent polynomial ring $K[x,x^{-1}]$, and the classical Leavitt
algebras $L(1,n)$ for $n\ge 2$. Constructions such as direct sums, direct limits, and matrices over the previous examples can be also realized in
this setting.

A great deal of effort has been focused on trying to unveil the
algebraic structure of $L_K(E)$ via the graph nature of $E$.
Concretely, the literature on Leavitt path algebras includes
necessary and sufficient conditions on a graph $E$ so that the
corresponding Leavitt path algebra $L_K(E)$ is simple \cite{AA1},
purely infinite simple \cite{AA2} or exchange \cite{APS}. Another
remarkable approach has been the research of their monoids of
finitely generated projective modules $V(L_K(E))$ \cite{AMFP} and
the computation of its algebraic K-theory \cite{ABC}. The
availability of these data, and the tight connection between
properties of Leavitt path algebras and those of graph
$C^*$-algebras suggested the convenience of studying classification
results for (purely infinite simple) Leavitt path algebras using
K-theoretical invariants, in a similar way to that of \cite{Kirch,
Phil} for the case of Kirchberg algebras, and in particular for
purely infinite simple graph $C^*$-algebras. In this direction, important connections between
Leavitt path algebras, graph C*-algebras and symbolic dynamics have been recently
investigated by various authors, see e.g. \cite{AbAnhLouP2}, \cite{Hazrat2},
\cite{matsumoto} and \cite{matsumoto2}. In particular, substantial
advances on the above classification problem have been undertaken in \cite{AbAnhLouP2}, but at this
moment there is no complete classification result available  yet.

The graded structure of Leavitt path algebras has been an important tool
for studying the ideal structure of Leavitt path algebras, as well as
suitable versions of the Uniqueness Theorems for graph $C^*$-algebras in
this context \cite{Tomforde}. Also, it becomes relevant for the recent work of S.
Paul Smith on noncommutative geometry \cite{SPaulSmith}. Very recently,
Hazrat \cite{Hazrat, Hazrat2} proposed an approach to the
classification problem when the equivalence relation considered is
that of graded isomorphisms between algebras. After introducing a
suitable notion of graded $K_0$-group, he proved that for a strongly
$\Z$-graded algebra $A$ the groups $K_0^{gr}(A)$ and $K_0(A_0)$ have
a natural structure of $\Z[x,x^{-1}] $-module, and also that, thanks
to Dade's Theorem, we have $K_0^{gr}(A)\cong K_0(A_0)$ as ordered
$\Z[x,x^{-1}] $-modules. Hazrat conjectures in \cite[Conjecture
1]{Hazrat} that, given row-finite graphs $E$ and $F$,  we have that
 $L(E)\cong_{gr} L(F)$ if and only if there is an order-preserving
 $\Z[x,x^{-1}]$-module isomorphism $(K_0^{gr}(L(E)),
[1_{L(E)}])\cong (K_0^{gr}(L(F)), [1_{L(F)}]) $. Moreover, he proves
that the conjecture holds for finite graphs being acyclic, comet or,
more generally, polycephalic \cite[Theorem 9]{Hazrat}. \vspace{.2truecm}

In this paper
we prove that a weak version of Hazrat's conjecture holds for finite graphs with neither sources nor sinks.
Namely we prove that, under the above hypothesis on the graded $K_0$-groups, the algebra
$L(F)$ is graded-isomorphic to a certain deformation $L^g(E)$ of the algebra $L(E)$.
(See the beginning of Section \ref{MainSection} for the precise definition of
$L^g(E)$.)  We also obtain a description of the automorphisms of $L(E)$ which induce
the identity on $K_0^{{\rm gr}}$,  for the same class of graphs (Theorem \ref{theor:realizingiso}).
This is closely related to the uniqueness part of \cite[Conjecture 3]{Hazrat}.
\vspace{.2truecm}

The article is organized as
follows. Section \ref{Preliminares} includes the basic definitions
and examples that will be used throughout. In Section \ref{Hazrat
invariant} we will show that Hazrat's invariant is equivalent to
other K-theoretic invariants, and also to an invariant used for
classifying subshifts of finite type (see \cite{W}). This section also contains
lifting results of certain isomorphisms between the graded $K_0$-groups to
graded Morita equivalences and graded isomorphisms, respectively (see Theorem \ref{thm:gr-Morita} and Theorem \ref{thm:l=1}).
These results confirm the existence part of \cite[Conjecture 3]{Hazrat} in the case where
the isomorphism between the $K_0^{{\rm gr}}$-groups is induced by a strong shift equivalence (see also
\cite[Proposition 15(2)]{Hazrat2}).
In Section
\ref{MainSection} we state the main result of the paper and fix the
strategy to prove it. The hard part of the proof is contained in Section
\ref{ProofMainTheorem}. Section 6 contains a study of the uniqueness of liftings
of maps between the K-theoretic invariants.

\section{Preliminares}\label{Preliminares}

We briefly recall some graph-theoretic definitions and properties; more complete explanations and descriptions can be found in \cite{AA1}. A \emph{graph} $E=(E^0,E^1,r,s)$ consists of two sets $E^0,E^1$ and maps $r,s:E^1 \to E^0$.  (Some authors use the phrase `directed' graph for this structure.)  The elements of $E^0$ are called \emph{vertices} and the elements of $E^1$ \emph{edges}. And edge $e \in E^1$ is said to point from $s(e)$ to $r(e)$. If $s^{-1}(v)$ is a finite set for every $v\in E^0$, then the graph is called \emph{row-finite}.  A vertex $v$ for which $s^{-1}(v)$ is empty is called a \emph{sink};  a vertex $w$ for which $r^{-1}(w)$ is empty is called a \emph{source}.  If $F$ is a subgraph of $E$, then $F$ is called \emph{complete} in case $s^{-1}_F(v)  = s^{-1}_E(v)$  for every $v\in F^0$ having $s^{-1}_F(v) \neq \emptyset$.

A \emph{path} $\mu$ in a graph $E$ is a sequence of edges $\mu=e_1\dots e_n$ such that $r(e_i)=s(e_{i+1})$ for $i=1,\dots,n-1$.  In this case, $s(\mu):=s(e_1)$ is the \emph{source} of $\mu$, $r(\mu):=r(e_n)$ is the \emph{range} of $\mu$, and $n$ is the \emph{length} of $\mu$.  An edge $e$ is an {\it exit} for a path $\mu = e_1 \dots e_n$ if there exists $i$ such that $s(e)=s(e_i)$ and $e \neq e_i$. If $\mu$ is a path in $E$, and if $v=s(\mu)=r(\mu)$, then $\mu$ is called a \emph{closed path based at $v$}. If $\mu= e_1 \dots e_n$ is a closed path based at $v = s(\mu)$ and $s(e_i)\neq s(e_j)$ for every $i\neq j$, then $\mu$ is called a \emph{cycle}.

The following notation is standard.  Let $A$ be a $p\times p$ matrix
having non-negative integer entries (i.e., $A = (a_{ij})\in {\rm
M}_p(\Z^+)$).  The graph $E_A$ is defined by setting
$(E_A)^0=\{v_1,v_2,\ldots,v_p\}$, and defining $(E_A)^1$ by
inserting exactly $a_{ij}$ edges in $E_A$ having source vertex $v_i$
and range vertex $v_j$.  Conversely, if $E$ is a finite graph with
vertices $\{v_1,v_2,...,v_p\}$, then we define the {\it adjacency
matrix} $A_E$ {\it of} $E$ by setting $(A_E)_{ij}$ as the number of
edges in $E$ having source vertex $v_i$ and range vertex $v_j$.

\begin{defi}\label{definition}  {\rm Let $E$ be any row-finite graph, and $K$ any field. The {\em Leavitt path $K$-algebra} $L_K(E)$ {\em of $E$ with coefficients in $K$} is  the $K$-algebra generated by a set $\{v \mid v\in E^0\}$ of pairwise orthogonal idempotents, together with a set of variables $\{e,e^* \mid e \in E^1 \}$, which satisfy the following relations:

(1) $s(e)e=er(e)=e$ for all $e\in E^1$.

(2) $r(e)e^*=e^*s(e)=e^*$ for all $e\in E^1$.

(3) (The ``CK1 relations") \ $e^*e'=\delta _{e,e'}r(e)$ for all $e,e'\in E^1$.

(4) (The ``CK2 relations") \ $v=\sum _{\{ e\in E^1\mid s(e)=v \}}ee^*$ for every vertex $v\in E^0$ for which $s^{-1}(v)$ is
nonempty.
}
\end{defi}

When the role of the coefficient field $K$ is not central to the discussion, we will often denote $L_K(E)$ simply by $L(E)$.
The set $\{e^*\mid e\in E^1\}$ will be denoted by $(E^1)^*$. We let $r(e^*)$ denote $s(e)$, and we let $s(e^*)$ denote $r(e)$. If $\mu = e_1 \dots e_n$ is a path, then we denote by $\mu^*$ the
element $e_n^* \dots e_1^*$ of $L_K(E)$.

An alternate description of $L_K(E)$ is given in \cite{AA1}, where it is described in terms of a free
associative algebra modulo the appropriate relations indicated in Definition \ref{definition} above.
As a consequence, if $A$ is any $K$-algebra which contains a set of elements satisfying these same relations
(we call such a set an $E$-\emph{family}), then there is a (unique) $K$-algebra homomorphism from $L_K(E)$ to
$A$ mapping the generators of $L_K(E)$ to their appropriate counterparts in $A$.

If $E$ is a finite graph then $L_K(E)$ is unital, with $\sum _{v\in
E^0} v=1_{L_K(E)}$. Conversely, if $L_K(E)$ is unital, then $E^0$ is
finite. If $E^0$ is infinite then $L_K(E)$ is a ring with a set of
local units; one such set of local units consists of sums of
distinct elements of $E^0$.  There is a canonical $\Z$-grading on
$L_K(E)$, which is given by $L_K(E)=\bigoplus_{n\in \Z} L_K(E)_n$,
where, for each $n\in \mathbb{Z}$, the {\it degree} $n$ component
$L_K(E)_n$ is spanned by elements of the form $\{pq^* \mid {\rm
length}(p)-{\rm length}(q)=n\}$.  The set of \emph{homogeneous
elements} is $\bigcup_{n\in {\mathbb Z}} L_K(E)_n$, and an element
of $L_K(E)_n$ is said to be $n$-\emph{homogeneous} or
\emph{homogeneous of degree} $n$. The $K$-linear extension of the
assignment $pq^* \mapsto qp^*$ (for $p,q$ paths in $E$) yields an
involution on $L_K(E)$, which we denote simply as ${}^*$.
Information regarding the ``C$^*$-algebra of a graph", also known as
the ``Cuntz-Krieger graph C$^*$-algebra", may be found in
\cite{Raeburn}. In particular, the graph $C^*$-algebra $C^*(E)$ of a
graph $E$ is the completion of $L_{\C}(E)$ in a suitable norm
\cite{Raeburn}.\vspace{.2truecm}

Recall that for a ring $R$,  we denote by $K_0(R)$ the Grothendieck group of $R$.  This is the
group $F/S$, where $F$ is the free group generated by isomorphism classes of finitely generated
projective left $R$-modules, and $S$ is the subgroup of $F$ generated by symbols of the form
$[P\oplus Q]-[P]-[Q]$. As is standard, we denote the isomorphism class of $R$ in $K_0(R)$ by
$[1_R]$; we will call it the order-unit of the $K_0$-group.  The group $K_0(R)$ is the universal group of the monoid $V(R)$ of isomorphism classes of
finitely generated projective left $R$-modules (with binary operation in $V(R)$ given by
$[A]+[B]=[A\oplus B]$).

For a row-finite graph $E$, the {\it monoid of} $E$, denoted $M_E$, is the monoid generated by the
set $E^0$ of vertices of $E$ modulo appropriate relations, specifically,
$$M_E = \langle a_v, v\in E^0\mid a_v=\sum\limits
_{\{e\in s^{-1}(v)\}}a_{r(e)}, \, \text{ for }v\in E^0 \text{ which
is not a sink}  \rangle.$$ It is shown in \cite[Theorem 2.5]{AMFP}
that $V(L(E))\cong M_E$ for any row-finite graph $E$. This yields
$K_0(L(E))\cong \mbox{Grot}(M_E):=G$, where $\mbox{Grot}(M_E)$
denotes the universal group of the monoid $M_E$. If $E$ is finite,
then $M_E$ is finitely generated, hence so is its universal group
$G$. Thus $G$ admits a presentation $\pi: \mathbb{Z}^n\rightarrow G$
(an epimorphism). Here $\mbox{ker}(\pi)$ is the subgroup of
relations, which, in case $E$ does not have sinks, corresponds to
the image of the group homomorphism $I-A^t_E:
\mathbb{Z}^n\rightarrow\mathbb{Z}^n$, where $A^t_E$ is the transpose
of the incidence matrix $A_E$ of $E$. Hence we get
$$K_0(L(E))\cong G\cong \mathbb{Z}^n
/\mbox{ker}(\pi)=\mathbb{Z}^n/\mbox{im}(I-A^t_E)=\mbox{coker}(I-A^t_E).$$ Moreover, under this
isomorphism the element $[1_{L(E)}]$ is represented by $(1,1,...,1)^t + \mbox{im}(I-A^t_E)$ in
$\mbox{coker}(I-A^t_E)$.\vspace{.2truecm}

Now, we will recall the notion of fractional skew monoid ring
\cite{AGGP} in the particular case of the monoid being $\Z^+$. Let
$A$ be a unital ring, let $p^2=p\in A$ be an idempotent, and let
$\alpha: A\rightarrow pAp$ be an isomorphism. Then the fractional
skew monoid ring of $A$ over $\Z^+$ by $\alpha$ is the ring $R$
generated by an isomorphic copy of $A$ and two generators $t_+,
t_{-}$ satisfying the following relations:
\begin{enumerate}
\item $t_-t_+= 1$ and $t_+t_-= p$;
\item $t_+^na= \alpha^n(a)t_+^n$ for all $a\in A, n\in \Z^+$;
\item $at_-^n= t_-^n\alpha^n(a)$ for all $a\in A, n\in \Z^+$.
\end{enumerate}
We denote $R:=A[t_+,t_{-};\alpha]$. By \cite[2.2]{AGGP} the elements $r\in R$ can all
be written as `polynomials' of the form
$$r=a_{n}t_+^n+\ldots
+a_{1}t_++a_0+t_{-}a_{-1}+\ldots t_{-}^m a_{-m},$$
 with coefficients $a_i\in A$. By \cite[Proposition 1.6]{AGGP}, $R$
is a $\Z$-graded ring $R= \bigoplus_{i\in\Z} R_i$, with $R_i=
At^i_+$ for $i>0$ and $R_i= t_-^{-i}A$ for $i<0$, while $A_0=A$.
This construction is an exact algebraic analog of the construction
of the crossed product of a C*-algebra by an endomorphism introduced
by Paschke \cite{Paschke}. In fact, if $A$ is a C*-algebra and the
corner isomorphism $\alpha$ is a *-homomorphism, then Paschke's
C*-crossed product, which he denotes by $A\rtimes _{\alpha}\N$, is
just the completion of $A[t_+,t_{-};\alpha]$ in a suitable norm. If
$E$ is a finite graph with no sources, then it is known that
$L(E)=L(E)_0[t_+, t_{-};\alpha]$ for suitable elements $t_+,t_{-}\in
L(E)$, being $\alpha$ the corner isomorphism of $L(E)_0$ defined by
the rule $\alpha (a)=t_+at_{-}$ for all $a\in L(E)_0$
\cite{AGGP}.

\section{Equivalent forms of Hazrat's invariant}\label{Hazrat invariant}
\label{sect:Equival}

In this section, we will relate the invariant proposed by Hazrat for
classifying Leavitt path algebras up to graded isomorphism to a
couple of K-theoretic invariants, and to an invariant (the so-called
dimension triple) which appears in the classification of subshifts
of finite type.\vspace{.2truecm}

Let $\Gamma $ be a group and let $A$ be a $\Gamma $-graded ring. We will denote by $\mbox{Gr}-A$ the category of
graded $A$-modules and graded homomorphisms, see e.g. \cite{HazMon}.
A graded right
$A$-module is projective in the category $\mbox{Mod}-A$ of right $A$-modules
if and only if it is projective in the category $\mbox{Gr}-A$ of graded right $A$-modules
(see e.g.  \cite[Proposition 1.2.13]{HazMon}).
Recall from \cite[2.5]{Hazrat} that  if $\Gamma$ is a group and $A$ is a
$\Gamma$-graded ring, then the graded Grothendieck group $K_0^{gr}(A)$ is
defined as the universal enveloping group of the abelian monoid of
isomorphism classes of graded finitely generated projective modules
over $A$.
Also, for $g \in \Gamma$, the $g$-suspension functor
$\mathcal{T}_{g}: \mbox{Gr}-A\rightarrow \mbox{Gr}-A$ (defined by
the rule $(\mathcal{T}_{g}(M))_h=M_{hg}$) is an isomorphism with the
property that $\mathcal{T}_{g}\mathcal{T}_{h}=\mathcal{T}_{gh}$ for
$g,h \in \Gamma$. Moreover,  $\mathcal{T}_{g}$ restricts to the category
of graded finitely generated projective $A$-modules, and hence it
induces a structure of $\Z[\Gamma]$-module on $K_0^{gr}(A)$ by the rule
$g[P]:=[\mathcal{T}_{g}(P)]$.

By Dade's Theorem \cite[Theorem 3.1.1]{Nasta_VanOs}, if $\Gamma$ is a
group and $A$ is a strongly $\Gamma$-graded ring, then the functor
$(-)_0: \mbox{Gr}-A\rightarrow \mbox{Mod}-{A_0}$ (defined by the
rule $M\mapsto M_0$) is an additive functor with inverse $-
\otimes_{A_0} A : \mbox{Mod}-A_0 \rightarrow \mbox{Gr}-A$, so that
it induces an equivalence of categories. Furthermore, since
$A_{g}\otimes_{A_0} A_{h}\cong A_{gh}$ for $g,h \in \Gamma$ as
$A_0$-bimodules, the suspension functor $\mathcal{T}_{g}:
\mbox{Gr}-A\rightarrow \mbox{Gr}-A$ induces a functor
$\mathcal{T}_{g}:  \mbox{Mod}-A_0\rightarrow \mbox{Mod}-A_0$, given
by the rule $M\mapsto M\otimes_{A_0}A_{g}$, such that
$\mathcal{T}_{g}\mathcal{T}_{h}=\mathcal{T}_{gh}$ for $g,h \in \Gamma$
making the following diagram commutative:
$$\xymatrix{\mbox{Gr}-A\ar[rr]^{\mathcal{T}_{g}} \ar[dd]_{(-)_0} & & \mbox{Gr}-A \ar[dd]^{(-)_0}\\
 & & \\
\mbox{Mod}-A_0\ar[rr]_{\mathcal{T}_{g}} & & \mbox{Mod}-A_0}
$$
Therefore $K_i(A_0)$ is also a $\Z[\Gamma]$-module, and $K_i^{gr}(A)\cong
K_i(A_0)$ as $\Z[\Gamma]$-modules for $i\geq 0$.

In the case of Leavitt path algebras, we will specialize the above
situation to the natural $\Z$-grading, so that $\Z[\Gamma]$ will be
$\Z[x,x^{-1}]$, and in particular we only need to take care of
$\mathcal{T}_1$ (which represents $x\cdot$) to have a complete
picture of the action.

Recall that two $\Gamma$-graded rings $A$ and $B$ are said to be {\it graded Morita equivalent}
in case there exists an equivalence of categories $F\colon  \mbox{Gr}-A\to\mbox{Gr}-B$ such that
$F\circ\mathcal T _g= \mathcal T _g \circ F$ for all $g\in \Gamma $. By \cite[Theorem 5.4]{GreenGordon},
$A$ and $B$ are graded Morita equivalent if and only if there is a graded right $A$-module $P$, which is a finitely generated
projective generator in $\mbox{Mod}-A$, such that $\mbox{End}_A(P)$ and $B$ are isomorphic
as graded rings.

Given a graded equivalence $F\colon  \mbox{Gr}-A\to\mbox{Gr}-B$, there is an induced ordered $\Z[\Gamma]$-module homomorphism
$K_0^{gr}(F)\colon K_0^{gr}(A)\to K_0^{gr}(B)$ defined by $K_0^{gr}(F)([X])= [F(X)]$ for a finitely generated graded projective module $X$.
Note that $K_0^{gr}(F)=K_0^{gr}(G)$ if $F\cong G$. If $\phi \colon A\to B$ is a graded isomorphism then $\phi$ induces an obvious graded Morita equivalence
$\Phi$ such that $K_0^{gr}(\Phi) =K_0^{gr}(\phi)$.

\begin{exems}
 \label{exam:grMoritaequi}{\rm
 (1) Let $e$ be a homogeneous full idempotent in the $\Gamma $-graded ring $A$, then $A$ is graded Morita equivalent to $eAe$, cf. \cite[Example 2.3.1]{HazMon}.

  (2) Assume that $A, B$ are $\Gamma $-graded rings and $F\colon  \mbox{Gr}-A\to\mbox{Gr}-B$ is a graded equivalence. Set
  $P=F(A_A)$. Then $P$ has a natural structure of graded $A-B$-bimodule, and $F\cong -\otimes _{A} P$, see e.g. \cite[Theorem 2.3.5]{HazMon}.
  Now assume that $P_B\cong B_B$ as graded modules. We can give to $B$ a structure of $A-B$-bimodule such that the above is a graded
  isomorphism of bimodules. Therefore $F\cong -\otimes _A B$, and in particular we obtain a graded ring isomorphism $\phi $ as the composition
  of the graded isomorphisms
  $$ A\to \text{End}_A(A) \to \text{End}_B (A\otimes _A B)\to \text{End}_B(B) \to B .$$
  It is easily checked that the Morita equivalence induced by $\phi$ is exactly $-\otimes _A B$. Therefore
  $$K_0^{gr}(F) = K_0^{gr} (-\otimes _{A} P)= K_0^{gr}(-\otimes _A B)= K_0^{gr} (\phi).$$}
  \end{exems}

We now state Hazrat's main conjecture.

\begin{conjecture}
\label{conj:Hazrat}{\rm \cite[Conjecture 1]{Hazrat} Given finite
graphs $E$ and $F$, we have that $L(E)\cong_{gr} L(F)$ if and only
if $$(K_0^{gr}(L(E)), [1_{L(E)}])\cong_{\Z[x,x^{-1}]}
(K_0^{gr}(L(F)), [1_{L(F)}]) .$$}
\end{conjecture}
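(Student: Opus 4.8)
The forward implication is immediate from the functoriality recorded above. If $\phi\colon L(E)\to L(F)$ is a graded isomorphism then, $E$ and $F$ being finite, both algebras are unital and $\phi(1_{L(E)})=1_{L(F)}$; the graded Morita equivalence $\Phi$ induced by $\phi$ gives an order-preserving $\Z[x,x^{-1}]$-module isomorphism $K_0^{gr}(\Phi)=K_0^{gr}(\phi)$ carrying $[1_{L(E)}]$ to $[1_{L(F)}]$. The entire weight of the conjecture therefore sits in the converse, and the plan for the converse is to route the $K_0^{gr}$-datum through symbolic dynamics and then lift it back to an algebra isomorphism.

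First I would reduce to the case of \emph{essential} graphs, those with neither sinks nor sources. This reduction is itself a step to be justified: sources should be removable by graph moves, while the non-strongly-graded part contributed by sinks (for which Dade's Theorem fails, so that the identification $K_0^{gr}(L(E))\cong K_0(L(E)_0)$ breaks down) must be split off; at each stage one must check that both the graded isomorphism class and the ordered $\Z[x,x^{-1}]$-module invariant are preserved. For an essential graph $L(E)$ is strongly graded, so by Section \ref{sect:Equival} the pair $(K_0^{gr}(L(E)),[1_{L(E)}])$ is, as an ordered $\Z[x,x^{-1}]$-module with distinguished order-unit, nothing but the dimension triple of $A_E^t$ together with its distinguished vector $(1,\dots,1)^t$.

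Granting this translation, an order-preserving $\Z[x,x^{-1}]$-module isomorphism $(K_0^{gr}(L(E)),[1_{L(E)}])\cong(K_0^{gr}(L(F)),[1_{L(F)}])$ is, by the classification of dimension triples for subshifts of finite type \cite{W}, exactly a \emph{shift equivalence} between $A_E^t$ and $A_F^t$ matching the distinguished order-units. The return trip is supplied by the lifting machinery of this paper: Theorem \ref{thm:gr-Morita} and Theorem \ref{thm:l=1} take a \emph{strong} shift equivalence inducing a prescribed $K_0^{gr}$-isomorphism and lift it first to a graded Morita equivalence and then, using the order-unit to force $[1_{L(E)}]\mapsto[1_{L(F)}]$, to a genuine graded isomorphism $L(E)\cong_{gr}L(F)$. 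Thus the converse reduces to upgrading the shift equivalence furnished in the previous step to a strong shift equivalence realizing the same order-preserving $K_0^{gr}$-map.

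This upgrade is where I expect the real difficulty to lie, and it is the step I cannot presently see how to carry out in full generality. The passage from shift equivalence to strong shift equivalence is precisely the algebraic form of Williams' conjecture, which is known to fail for general nonnegative integral matrices by the counterexamples of Kim and Roush, and the presence of the order-unit does not obviously rescue it. Overcoming this obstacle is the whole content of the biconditional: one would need either to exploit the extra rigidity coming from the order-unit and the $\Z[x,x^{-1}]$-action to manufacture a strong shift equivalence where the bare dynamical problem provides only a shift equivalence, or to bound and then absorb the discrepancy between the two (the same discrepancy that, in the weaker statement, produces a graded isomorphism onto a deformation of $L(E)$ rather than onto $L(E)$ itself). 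Pinning down exactly when this absorption is possible is, in my view, the crux on which a full proof stands or falls.
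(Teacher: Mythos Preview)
This statement is a \emph{conjecture}, not a theorem; the paper does not prove it and explicitly presents it as open. What the paper does prove is the weak version, Theorem~\ref{theor:Kiffgr-iso unital}: under the $K_0^{gr}$-hypothesis (and for finite essential graphs) one obtains $L^g(E)\cong_{gr}L(F)$ for some locally inner automorphism $g$ of $L(E)_0$, not $L(E)\cong_{gr}L(F)$.

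Your analysis of where a proof would break is accurate, and your honesty about the gap is appropriate. You correctly identify that the forward implication is formal, that the converse translates (via Theorem~\ref{Theorem: MainFirstSection}) into a shift equivalence between $A_E^t$ and $A_F^t$, and that Theorems~\ref{thm:gr-Morita} and~\ref{thm:l=1} would finish the job \emph{if} this shift equivalence could be promoted to a strong shift equivalence inducing the same $K_0^{gr}$-map. You are also right that this promotion is essentially the Williams problem, known to fail in general by Kim--Roush, and that the order-unit does not obviously rescue it.

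It is worth noting that the paper's own attack on the weak version does \emph{not} proceed by attempting this promotion. Rather than routing through strong shift equivalence, Section~\ref{ProofMainTheorem} works directly with the shift equivalence data $(S,R)$ of lag $l$: it builds explicit partitions of the path spaces $E^{kl+\varepsilon}v_i$ and $F^{m+kl+\varepsilon}w_j$ and uses them to lift the $K_0$-diagram $(\dag)$ to a diagram $(\diamondsuit)$ of algebra maps between the finite stages $\mathcal M(E)_n$ and $\mathcal M(F)_n$. This produces two isomorphisms $\varphi_0,\varphi_1\colon \mathcal M(E)\to\mathcal M(F)$ with $\beta\varphi_0=\varphi_1\alpha$, and the defect $g=\varphi_0^{-1}\varphi_1$ is the locally inner automorphism appearing in the conclusion. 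So the ``discrepancy'' you allude to is made completely explicit in the paper: it is the failure of $\varphi_0$ and $\varphi_1$ to coincide, and the open question (see Remark~\ref{rem:no-iso}) is whether for \emph{this particular} $g$ one always has $L^g(E)\cong_{gr}L(E)$. Your reduction to essential graphs is also more optimistic than what the paper achieves; the paper simply assumes essentiality throughout and does not carry out any such reduction.
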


\medskip

Hazrat proves that the conjecture holds for acyclic, comet and
polycephalic finite graphs \cite[Theorem 9]{Hazrat}. The isomorphism appearing in the Conjecture
is assumed to preserve the order in both directions, to commute with the action of $\mathcal T _*$, and to take the canonical order-unit
$[1_{L(E)}]$ of $K_0^{gr}(L(E))$ to the canonical order-unit $[1_{L(F)}]$ of $K_0^{gr}(L(F))$. These are the isomorphisms in the
category of triples $(G,G^+,u)$, where $G$ is a $\Z[x,x^{-1}]$-module, $G^+$ is a positive cone in $G$, and $u$
is a distinguished order-unit in $G$ (see \cite[3.5]{HazMon}). This will be referred to later as the order-unit preserving category.

\vspace{.2truecm}

Because our tools require the use of the structure of strongly $\Z$-graded rings, we will
need to ask our graphs to contain no \texttt{sink}s \cite[Theorem
4]{Hazrat}. Also, we will consider a different picture of the
$\Z$-graded structure of a Leavitt path algebra $L(E)$ over a finite
graph $E$, which links grading with the existence of special non
necessarily unital endomorphisms of $L(E)_0$. To be concrete, assume
that $E$ is a finite graph that contains no sources. If $E^0=\{
v_1,\dots ,v_n\}$, then for each $1\leq i\leq n$ there exists at
least one $e_i\in E^1$ such that $r(e_i)=v_i$. If we define
$t_+=\sum\limits_{i=1}^{n}e_i\in L(E)_1$ and
$t_{-}=\sum\limits_{i=1}^{n}e_i^*\in L(E)_{-1}$, then it is easy to
see that $t_{-}t_+=1$. Hence, by \cite[Lemma 2.4]{AGGP},
$$L(E)=L(E)_0[t_+, t_{-}; \alpha].$$
Since this picture of $L(E)$ plays a central role in our arguments, from now on we will ask our
graphs to have no \texttt{source}s
Moreover, if  $p:=t_+t_{-}$, then $\alpha: L(E)_0\rightarrow
pL(E)_0p$ is a corner isomorphism defined by the rule $\alpha
(a)=t_+at_{-}$. Under this picture,
$L(E)_n=L(E)_0t_+^n=L(E)_0p_nt_+^n$ and
$L(E)_{-n}=t_{-}^nL(E)_0=t_{-}^np_nL(E)_0$ for $n>0$, where
$p_n=\alpha^n(1)$. Moreover, since $L(E)_0$ is a unital
ultramatricial algebra (see e.g. \cite[Proof of Theorem 5.3]{AMFP}),
we can classify it via its Bratteli diagram $X_A$. This Bratteli
diagram can be described fixing at each level of the diagram the
same number of vertices, namely $\vert E^0\vert$ vertices at each
level, while the edges between two different levels are codified by
the entries of the adjacency matrix $A:=A_E$ of the graph $E$.

\medskip

The following standard definition will be useful in order to shorten our statements.

\begin{defi}
 \label{def:essential-graphs} {\rm An {\it essential graph} is a graph with neither sources nor sinks.}
\end{defi}

We will show that the K-theoretic part of \cite[Conjecture
1]{Hazrat} can be rephrased in equivalent forms which are more
useful to relate the K-theoretic isomorphisms with the behavior of
the graded structure of the Leavitt path algebras involved.

Given any $M\in \mbox{Mod}-L(E)_0$, we will define a right $L(E)_0$-module $M^{\alpha}$ as follows:
\begin{enumerate}
\item As abelian group, $M^{\alpha}:=Mp$;
\item The action is defined by the rule
$$\xymatrix @R=.5pc @C=.5pc { M^{\alpha} \times L(E)_0 \ar[r]  & M^{\alpha}\\
(m,a) \ar @{|->}[r]  & m\ast a:= m\alpha (a)
}
$$
\end{enumerate}
Notice that, whenever $m\in M^{\alpha}$, $m\ast 1=m\alpha(1)=m\cdot
p=m$, so that $M^{\alpha}$ is a unital module.

\begin{lem}\label{Lemma: FirstEquiv}
The right $L(E)_0$-modules $M^{\alpha}$ and $M\otimes_{L(E)_0} L(E)_0t_+$ are isomorphic.
\end{lem}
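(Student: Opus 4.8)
The plan is to exhibit an explicit isomorphism of right $L(E)_0$-modules and check it is well defined, bijective, and $L(E)_0$-linear. Recall that $L(E)_0 t_+ = L(E)_1$ and that, as a right $L(E)_0$-module, $L(E)_1 = L(E)_0 p_1 t_+$ where $p_1 = \alpha(1) = t_+ t_-= p$; the right action of $a \in L(E)_0$ on $b t_+ \in L(E)_1$ is $(bt_+)a = b\alpha(a)t_+$, using relation (2) of the fractional skew monoid ring. So as an abelian group $L(E)_1 \cong L(E)_0 p$ via $bt_+ \mapsto bp$ (with inverse $c \mapsto ct_+ = cpt_+$, noting $t_-t_+=1$ forces this to be injective on the image), and under this identification the right $L(E)_0$-action on $L(E)_1$ is carried to $c \cdot a = c\alpha(a)$ on $L(E)_0 p$. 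That is precisely the module $(L(E)_0)^\alpha = M^\alpha$ in the special case $M = L(E)_0$.

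First I would define $\Phi\colon M \otimes_{L(E)_0} L(E)_0 t_+ \to M^\alpha$ on simple tensors by $\Phi(m \otimes b t_+) := m b p$. I would check this is balanced over $L(E)_0$: for $a \in L(E)_0$ we have $\Phi(ma \otimes bt_+) = mabp$ and $\Phi(m \otimes a b t_+) = mabp$ as well (here $abt_+ \in L(E)_1$ and its image in $M^\alpha$-land is $abp$), so $\Phi$ descends to the tensor product. Linearity is immediate from additivity in each variable.

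Next I would verify $L(E)_0$-linearity. On the left side the right action of $a \in L(E)_0$ is $(m \otimes bt_+)\cdot a = m \otimes (bt_+)a = m \otimes b\alpha(a)t_+$, so $\Phi((m\otimes bt_+)\cdot a) = m b\alpha(a) p = m b \alpha(a)$ (since $\alpha(a) = \alpha(a)p$). On the right side the action is $\Phi(m\otimes bt_+)\ast a = (mbp)\ast a = mbp\,\alpha(a) = mb\alpha(a)$. These agree, so $\Phi$ is a homomorphism of right $L(E)_0$-modules.

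Finally I would produce the inverse $\Psi\colon M^\alpha \to M \otimes_{L(E)_0} L(E)_0 t_+$ by $\Psi(mp) := mp \otimes pt_+ = m \otimes pt_+$, and check $\Psi \circ \Phi = \mathrm{id}$ and $\Phi \circ \Psi = \mathrm{id}$ using $t_-t_+=1$, $t_+t_-=p$, and the fact that every element of $L(E)_1$ has the form $bpt_+$ with $bpt_+ = b t_+$ (so that in the tensor product $b t_+ \otimes$-moves of scalars let us reduce any $m \otimes bt_+$ to $mb \otimes pt_+$). The one point requiring a little care — and the only place there is any genuine obstacle — is checking that $\Psi$ is well defined, i.e. that the assignment depends only on the element $mp$ of $M^\alpha = Mp$ and not on a chosen preimage; this is automatic since $mp$ literally \emph{is} the element we feed in, and $\Psi$ is defined directly on $Mp$. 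The remaining identities are routine manipulations with the defining relations of $L(E)_0[t_+,t_-;\alpha]$.
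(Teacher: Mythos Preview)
Your proof is correct and follows essentially the same approach as the paper: both define the map $m\otimes bt_+\mapsto mbp$ (your $\Phi$ is exactly the paper's $f$), check it is $L(E)_0$-balanced and $L(E)_0$-linear via the relation $(bt_+)a=b\alpha(a)t_+$, and identify it as a bijection. The only difference is cosmetic: the paper simply asserts $f$ is a group isomorphism after noting $pt_+=t_+$ forces $L(E)_0t_+=L(E)_0pt_+$, whereas you write out the inverse $\Psi$ explicitly and verify both compositions.
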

\begin{proof}
Since $pt_+=t_+$, the module $M\otimes_{L(E)_0} L(E)_0t_+$ is
unital. Moreover,
$$M\otimes_{L(E)_0} L(E)_0t_+=M\otimes_{L(E)_0} L(E)_0pt_+.$$
Hence, the map
$$\xymatrix @R=.5pc @C=.5pc { **[l]M\otimes_{L(E)_0} L(E)_0pt_+ \ar[rrr]^(.5)f & & & M^{\alpha}\\
m\otimes (xp)t_+ \ar @{|->}[rrr] & & & mxp }
$$
is a group isomorphism. Also, for any $r\in L(E)_0$,
$$
f((m\otimes (xp)t_+)r)=f(m\otimes (xp)\alpha(r)t_+)=(mxp)\alpha(r)=(mxp)\ast
r=f(m\otimes (xp)t_+)\ast r
$$
which ends the proof.
\end{proof}

Define $\mathcal{T}:=\mathcal{T}_1: \mbox{Mod}-L(E)_0\rightarrow
\mbox{Mod}-L(E)_0$ by
$$\mathcal T (M)= M\otimes_{L(E)_0} L(E)_1 = M\otimes _{L(E)_0} L(E)_0t_+.$$
This corresponds to the translation functor when $L(E)$ is strongly graded, i.e., when $E$ has no sinks
(\cite[Theorem 4]{Hazrat}).
By Lemma \ref{Lemma: FirstEquiv}, the induced
isomorphism
$$\mathcal{T}_{\ast}: K_0(L(E)_0)\rightarrow K_0(L(E)_0)$$
is given by the rule $\mathcal{T}_{\ast}([P])=[P^{\alpha}]$.\vspace{.2truecm}

We will make use of the following well-known (and easy to prove)
lemma:

\begin{lem}
\label{lem:full=no-sinks} Let $F$ be a finite graph. Then $p=t_+t_-$
is a full idempotent in $L(F)_0$ if and only if $F$ does not have
sinks.
\end{lem}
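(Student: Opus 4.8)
The plan is to unwind what $p = t_+t_-$ is in terms of the defining relations and vertices. Recall that $t_+ = \sum_{i=1}^n e_i$ with $r(e_i) = v_i$, so $t_- = \sum_{i=1}^n e_i^*$, and using the CK1 relations $e_j^* e_i = \delta_{ij} r(e_i) = \delta_{ij} v_i$ one computes $p = t_+ t_- = \sum_{i} e_i e_i^*$. The key observation is that $e_i e_i^*$ is a projection of $L(F)_0$ sitting under the vertex $s(e_i)$, i.e.\ $s(e_i)\cdot e_ie_i^* = e_ie_i^* = e_ie_i^*\cdot s(e_i)$. So $p = \sum_i e_ie_i^*$ is a sum of ``pieces'' of vertices, one piece for each $v_i\in F^0$, the piece being chosen inside the source of the fixed edge $e_i$ with range $v_i$.

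First I would recall that the ideal of $L(F)_0$ generated by $p$ is all of $L(F)_0$ iff $p$ is full (since $L(F)_0$ is unital, with $1 = \sum_{v\in F^0} v$). So it suffices to decide when every vertex $v\in F^0$ lies in the ideal $L(F)_0 p L(F)_0$. Using the CK2 relation at a vertex $w$ which is not a sink, $w = \sum_{\{e\,:\,s(e)=w\}} ee^*$, each summand $ee^* = e\,r(e)\,e^*$, and $r(e) = v_j$ for some $j$; since $e_j e_j^* \le v_j$ and $v_j = r(e_j)$, one can reconstruct $v_j$ inside the ideal generated by $p$ by writing $v_j = e_j^*(e_je_j^*)e_j$, hence $e_j e_j^* \in L(F)_0 p L(F)_0$ forces $v_j$ into it, and then $ee^* = e v_j e^*$ lies there too. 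Iterating the CK2 relations: starting from any vertex $w$, expand $w$ as a sum over edges out of $w$, land on the ranges of those edges, and keep going; because $F$ is finite and $p$ already ``touches'' every vertex through the $e_i$'s, one shows by an induction on path length that $w \in L(F)_0 p L(F)_0$ for every $w$ that is not a sink, and in fact — using that every $v_i$ is the range of some edge — for every vertex, provided there are no sinks. Conversely, if $F$ has a sink $v$, I would argue that $v$ is a central homogeneous idempotent of $L(F)_0$ orthogonal to $p$: indeed $v$ has no edges out of it, so it does not appear in the CK2 expansions, $v\cdot e_ie_i^* = 0$ for all $i$ (as $s(e_i)\ne v$, since $v$ being a sink is never a source of an edge), hence $vp = 0$ and $v L(F)_0 p L(F)_0 = 0$, so $p$ cannot be full.

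The main obstacle is the forward direction: verifying carefully that the CK2-expansion induction actually terminates and covers \emph{all} vertices, not just those reachable by long paths. The point to be careful about is that ``no sinks'' guarantees the CK2 relation is available at every vertex, so every $v$ can be pushed forward along edges, and ``finite graph'' together with the fact that each $v_i$ is hit as a range of the chosen edge $e_i$ (this uses ``no sources'' implicitly in the ambient standing hypothesis, but for this lemma one only needs that the $e_i$ exist, which they do once $v_i$ has an incoming edge — and if some $v_i$ were a source we would simply not have an $e_i$ with $r(e_i)=v_i$, so one should phrase the construction of $t_+$ and hence of $p$ correctly, matching the setup of the excerpt). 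I would organize this as: (i) compute $p = \sum_i e_ie_i^*$; (ii) show $v_i \in \langle p\rangle$ for all $i$; (iii) show $w\in \langle p\rangle$ for every non-sink $w$ by downward induction on CK2; (iv) conclude $\langle p\rangle = L(F)_0$ when there are no sinks; (v) prove the contrapositive using a sink as an orthogonal central idempotent.
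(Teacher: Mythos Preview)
The paper omits the proof, calling the lemma ``well-known (and easy to prove)'', so there is no paper argument to compare against.

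Your converse (a sink yields a central idempotent of $L(F)_0$ orthogonal to $p$) is fine. In the forward direction there is a genuine gap: when you write $v_j = e_j^*(e_je_j^*)e_j$ and $ee^* = e\,v_j\,e^*$, the outer factors $e_j^*, e_j, e, e^*$ lie in $L(F)_{\pm 1}$, not in $L(F)_0$. Your manipulations therefore show that $p$ is full in $L(F)$, not in $L(F)_0$, and fullness in the ambient algebra is a weaker statement. (One could rescue this by noting that $L(F)$ is strongly $\Z$-graded when $F$ has no sinks and then invoking the ideal correspondence for strongly graded rings, but you do not do so, and a direct argument is shorter anyway.)

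The fix is a one-line adjustment. For any $e\in F^1$ with $r(e)=v_j$, the elements $ee_j^*$ and $e_je^*$ \emph{do} lie in $L(F)_0$, and
\[
ee^* \;=\; (ee_j^*)\,p\,(e_je^*) \;\in\; L(F)_0\,p\,L(F)_0 .
\]
If $F$ has no sinks, CK2 then gives $w=\sum_{s(e)=w}ee^*\in L(F)_0\,p\,L(F)_0$ for every vertex $w$, hence $1\in L(F)_0\,p\,L(F)_0$. With this correction your steps (ii)--(iii) collapse to a single application of CK2 and no induction on path length is required.
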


Assume that $E$ is an essential graph. It follows
from Lemma \ref{lem:full=no-sinks} and the fact that $\alpha:
L(E)_0\rightarrow pL(E)_0p$ is an isomorphism that the induced map
$$\alpha_{\ast}: K_0(L(E)_0)\rightarrow K_0(L(E)_0)$$
is a group isomorphism.

\begin{lem}\label{Lemma: SecondEquiv} If $E$ is a finite essential graph, then the maps $\alpha_{\ast}$ and
$\mathcal{T}_{\ast}$ are mutually inverse isomorphisms.
\end{lem}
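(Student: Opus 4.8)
The plan is to show that $\alpha_*$ and $\mathcal{T}_*$ are inverse to each other by exhibiting, at the level of modules, natural isomorphisms $\mathcal{T}(M^\alpha)\cong M$ and $(\mathcal{T}(M))^\alpha\cong M$ for every unital $M\in\mbox{Mod}-L(E)_0$, or alternatively by directly computing the composite maps on $K_0(L(E)_0)$. Recall from Lemma \ref{Lemma: FirstEquiv} that $\mathcal{T}_*([P])=[P^\alpha]=[Pp]$ with the twisted action $m\ast a=m\alpha(a)$, and that $\alpha_*$ is induced by the corner isomorphism $\alpha\colon L(E)_0\to pL(E)_0p$; concretely, $\alpha_*([P])$ is the class of $P$ viewed as a right $pL(E)_0p$-module via restriction along $\alpha$, re-regarded as an $L(E)_0$-module, which one checks is exactly $[\mbox{Hom}_{L(E)_0}(pL(E)_0, P)]$ or equivalently $[P\otimes_{L(E)_0} (\text{the }\alpha\text{-twist of }pL(E)_0)]$.

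The key computation is the following. First I would fix a finitely generated projective $P$ and identify $\alpha_*([P])$ concretely: since $\alpha\colon L(E)_0\xrightarrow{\sim} pL(E)_0p$ is an isomorphism, the functor sending a right $L(E)_0$-module $N$ to $N$ with action twisted through $\alpha$ (landing in $pL(E)_0p$-modules) followed by $-\otimes_{pL(E)_0p} pL(E)_0$ to return to $L(E)_0$-modules realizes $\alpha_*$ on $K_0$; call this functor $\mathcal{S}$. Then I would show $\mathcal{S}\circ\mathcal{T}\cong \mathrm{id}$ and $\mathcal{T}\circ\mathcal{S}\cong\mathrm{id}$ on the category of unital projective $L(E)_0$-modules. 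For the first, $\mathcal{T}(M)=M\otimes_{L(E)_0}L(E)_0t_+$ and applying $\mathcal{S}$ one twists and tensors; using the relations $t_-t_+=1$, $t_+t_-=p$, and $t_+^na=\alpha^n(a)t_+^n$, the element $m\otimes xt_+$ is sent, after multiplying by $t_-$ on the appropriate side, back to $mx$, giving a natural isomorphism onto $M$. For the other composite one argues symmetrically. The upshot is $\mathcal{T}_*\circ\alpha_*=\mathrm{id}$ and $\alpha_*\circ\mathcal{T}_*=\mathrm{id}$ on $K_0(L(E)_0)$.

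A cleaner route, which I would actually write up, is to observe that $\mathcal{T}$ is the translation functor $\mathcal{T}_1$ and is therefore an \emph{auto-equivalence} of $\mbox{Mod}-L(E)_0$ (since $E$ has no sinks, $L(E)$ is strongly graded and Dade's Theorem applies, as recalled in Section \ref{sect:Equival}); its inverse is $\mathcal{T}_{-1}$, the functor $M\mapsto M\otimes_{L(E)_0}L(E)_{-1}=M\otimes_{L(E)_0}t_-L(E)_0$. It then suffices to identify $\mathcal{T}_{-1}$ on $K_0$ with $\alpha_*$. For a projective $P$, using $t_+t_-=p$ and $t_-L(E)_0=t_-pL(E)_0$ one gets $P\otimes_{L(E)_0}t_-L(E)_0\cong P\otimes_{L(E)_0}t_-pL(E)_0$, and the map $q\otimes t_-(pa)\mapsto \alpha^{-1}(qp)\,a$ is a well-defined $L(E)_0$-module isomorphism onto $P$ with the action pulled back along $\alpha$, precisely because $\alpha^{-1}$ is defined on $pL(E)_0p$ and $\alpha(\alpha^{-1}(qp))=qp$. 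This exhibits $\mathcal{T}_{-1,*}=\alpha_*$, hence $\alpha_*=(\mathcal{T}_*)^{-1}$, which is the claim.

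The main obstacle I anticipate is purely bookkeeping: keeping straight on which side $t_+$ and $t_-$ act, and correctly tracking the corner $p=\alpha(1)$ so that all the tensor-product identifications are over the right ring and genuinely well-defined (the twisted actions must be checked to be balanced over $L(E)_0$ as in the proof of Lemma \ref{Lemma: FirstEquiv}). There is no deep difficulty beyond this; once the identification $\mathcal{T}_{-1}\cong(-)$-pulled-back-along-$\alpha$ is set up correctly on finitely generated projectives, the fact that $\mathcal{T}_1$ and $\mathcal{T}_{-1}$ are mutually inverse auto-equivalences (guaranteed by $\mathcal{T}_g\mathcal{T}_h=\mathcal{T}_{gh}$) immediately yields that $\alpha_*$ and $\mathcal{T}_*$ are mutually inverse on $K_0(L(E)_0)$.
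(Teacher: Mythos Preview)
Your overall strategy is correct and aligns with the paper's: both arguments show that the composite of $\alpha_*$ and $\mathcal{T}_*$ is the identity on $K_0(L(E)_0)$, using that each map is already known to be an isomorphism. The paper, however, executes this far more concretely than either of your sketches. It works directly on generators $[eL(E)_0]$ for $e=e^2\in L(E)_0$: since $\alpha_*([eL(E)_0])=[\alpha(e)L(E)_0]$ and $\mathcal{T}_*([Q])=[Q^\alpha]$, one needs only an explicit right $L(E)_0$-module isomorphism $eL(E)_0\to(\alpha(e)L(E)_0)^\alpha$, and the map $x\mapsto\alpha(x)$ does the job (surjectivity uses that $\alpha\colon L(E)_0\to pL(E)_0p$ is onto). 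No appeal to Dade's Theorem, $\mathcal{T}_{-1}$, or functorial tensor identifications is needed.

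Your ``cleaner route'' via $\mathcal{T}_{-1}$ is a legitimate alternative, but the displayed map $q\otimes t_-(pa)\mapsto\alpha^{-1}(qp)\,a$ is not well-defined as written: $q\in P$ is a module element, so $\alpha^{-1}(qp)$ has no meaning. The correct identification at the level of generators is that $eL(E)_0\otimes_{L(E)_0}t_-L(E)_0\cong et_-L(E)_0\cong\alpha(e)L(E)_0$ via $\alpha(e)a\mapsto t_-\alpha(e)a=et_-a$ (using $t_-\alpha(e)=et_-$ and $t_+et_-=\alpha(e)$ for injectivity). This is exactly the bookkeeping you anticipated; once fixed, your argument goes through, but the paper's version sidesteps it entirely by staying with the single explicit map $x\mapsto\alpha(x)$.
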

\begin{proof}
Since both maps are group isomorphisms, it suffices to show that
$$\mbox{Id}_{K_0(L(E)_0)}=\mathcal{T}_{\ast}\circ \alpha_{\ast}.$$
Let $e^2=e\in L(E)_0$. Then
$$(\mathcal{T}_{\ast}\circ \alpha_{\ast})([eL(E)_0])=\mathcal{T}_{\ast}([\alpha(e)L(E)_0])=[(\alpha(e)L(E)_0)^{\alpha}].$$
The map
$$ \xymatrix @R=.5pc @C=.5pc { eL(E)_0 \ar[rrr]^(.4)g & & & (\alpha(e)L(E)_0)^{\alpha}\\
x \ar @{|->}[rrr] & & & \alpha(x)
}
$$
is well-defined and additive. As $\alpha :L(E)_0\rightarrow
pL(E)_0p$ is a corner isomorphism, $g$ is injective. Now, let $y\in
(\alpha(e)L(E)_0)^{\alpha}$. Thus, $y=\alpha(e)rp$ for some $r\in
L(E)_0$. Then,
$$y=\alpha(e)rp=\alpha(e\cdot 1)rp=\alpha(e)prp $$
and since $prp\in pL(E)_0p=\alpha(L(E)_0)$, there exists a unique $s\in L(E)_0$ such that $\alpha(s)=prp$, whence
$$\alpha(e)prp=\alpha(e)\alpha(s)=\alpha(es) $$
and hence $y=g(es)$, so that $g$ is onto.

Finally, for any $x\in eL(E)_0$ and any $r\in L(E)_0$,
$$g(xr)=\alpha(xr)=\alpha(x)\alpha(r)=\alpha(x)\ast r=g(x)\ast r,$$

Thus $[eL(E)_0]=[(\alpha(e)L(E)_0)^{\alpha}]$ in $K_0(L(E)_0)$, so
that $(\mathcal{T}_{\ast}\circ \alpha_{\ast})([eL(E)_0])=[e(L(E)_0]$
for all idempotents $e$ in $L(E)_0$. Since $K_0(L(E)_0)$ is
generated by the elements $[eL(E)_0]$, for $e=e^2\in L(E)_0$, we
conclude that $\mathcal T_{\ast}\circ \alpha
_{\ast}=\mbox{Id}_{K_0(L(E)_0)}$.
\end{proof}

As a consequence we have the following result.

\begin{prop}\label{Corollary: TheGoodOne}
Let $E,F$ be finite essential graphs, and
$\alpha, \beta$ the respective corner isomorphisms. Then, the
following are equivalent:
\begin{enumerate}
\item There exists an isomorphism $\Psi: (K_0(L(E)_0), K_0^+(L(E)_0))\to (K_0(L(F)_0), K_0^+(L(F)_0))$
such that $\Psi\circ
(\mathcal{T}_E)_{\ast}=(\mathcal{T}_F)_{\ast}\circ\Psi$
\item There exists an isomorphism $\Phi: (K_0(L(E)_0), K_0^+(L(E)_0))\to (K_0(L(F)_0), K_0^+(L(F)_0))$ such that $\Phi\circ \alpha_{\ast}=\beta_{\ast}\circ \Phi$
\item There exists an isomorphism of ordered $\Z[x,x^{-1}]$-modules $K_0^{gr}(L(E))\cong K_0^{gr}(L(F))$.
\end{enumerate}
Moreover, the result holds also in the order-unit
preserving category. \hspace{\fill}$\Box$
\end{prop}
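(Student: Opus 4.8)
The plan is to prove the chain of equivalences by showing $(2)\Leftrightarrow(3)$ first, then $(1)\Leftrightarrow(2)$, and finally observing that every step carries over verbatim in the order-unit preserving category. The equivalence $(1)\Leftrightarrow(2)$ is essentially formal given the preceding lemmas: by Lemma \ref{Lemma: SecondEquiv}, for each of the graphs $E$ and $F$ the maps $\alpha_\ast$ and $(\mathcal{T}_E)_\ast$ (respectively $\beta_\ast$ and $(\mathcal{T}_F)_\ast$) are mutually inverse automorphisms of $K_0(L(E)_0)$ (respectively $K_0(L(F)_0)$). Hence an order-isomorphism $\Psi$ intertwining the $\mathcal{T}_\ast$'s intertwines their inverses as well, so one may simply take $\Phi=\Psi$, and conversely take $\Psi=\Phi$. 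So $(1)$ and $(2)$ are literally the same statement once one knows $\mathcal{T}_\ast=\alpha_\ast^{-1}$.

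For $(2)\Leftrightarrow(3)$, the key point is that $L(E)$ is strongly $\Z$-graded: by Lemma \ref{lem:full=no-sinks} the idempotent $p=t_+t_-$ is full in $L(E)_0$ (since $E$ has no sinks), and together with $t_-t_+=1$ this forces $L(E)_{-1}L(E)_1=L(E)_1L(E)_{-1}=L(E)_0$, i.e. strong gradedness. Therefore Dade's Theorem applies (as recalled in Section \ref{sect:Equival}), giving an equivalence $(-)_0\colon \mathrm{Gr}\text{-}L(E)\to \mathrm{Mod}\text{-}L(E)_0$ under which the suspension functor $\mathcal{T}_1$ on $\mathrm{Gr}\text{-}L(E)$ corresponds to $\mathcal{T}=-\otimes_{L(E)_0}L(E)_1$ on $\mathrm{Mod}\text{-}L(E)_0$, and hence an isomorphism of $\Z[x,x^{-1}]$-modules $K_0^{gr}(L(E))\cong K_0(L(E)_0)$ carrying the positive cone to the positive cone and $x\cdot$ to $\mathcal{T}_\ast$. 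The same holds for $F$. Composing, an ordered $\Z[x,x^{-1}]$-module isomorphism $K_0^{gr}(L(E))\cong K_0^{gr}(L(F))$ is exactly the same data as an order-isomorphism $K_0(L(E)_0)\to K_0(L(F)_0)$ commuting with the $\mathcal{T}_\ast$'s, which is $(1)$, hence $(2)$ by the previous paragraph.

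Finally, to get the order-unit preserving version, one checks that under Dade's equivalence the class $[1_{L(E)}]\in K_0^{gr}(L(E))$ corresponds to $[L(E)_0]=[1_{L(E)_0}]\in K_0(L(E)_0)$ (the functor $(-)_0$ sends the free rank-one graded module $L(E)$ to $L(E)_0$), so requiring $\Psi$ (equivalently $\Phi$) to send order-unit to order-unit matches requiring the $K_0^{gr}$-isomorphism to send $[1_{L(E)}]$ to $[1_{L(F)}]$. No step uses anything beyond this correspondence, so the whole argument runs unchanged in the order-unit preserving category.

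The main obstacle I anticipate is not any single hard computation but rather bookkeeping: one must verify carefully that the isomorphism $K_0^{gr}(L(E))\cong K_0(L(E)_0)$ supplied by Dade's Theorem is simultaneously (i) an isomorphism of abelian groups, (ii) order-preserving in both directions, (iii) compatible with the $\Z[x,x^{-1}]$-actions, i.e. intertwines $x\cdot$ with $\mathcal{T}_\ast$, and (iv) order-unit preserving — and that $\mathcal{T}_\ast$ as defined here (via $-\otimes_{L(E)_0}L(E)_0t_+$, equivalently $[P]\mapsto[P^\alpha]$ by Lemma \ref{Lemma: FirstEquiv}) is genuinely the functor induced by the $1$-suspension, not merely some automorphism. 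Points (i)–(iii) are exactly the content of the general discussion in Section \ref{sect:Equival} specialized to $\Gamma=\Z$, so the real work is (iv) plus the identification of the two descriptions of $\mathcal{T}_\ast$, both of which are straightforward but need to be stated.
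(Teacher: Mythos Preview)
Your proposal is correct and follows exactly the approach the paper intends: the paper presents this proposition with no proof at all (just the $\Box$), treating it as an immediate consequence of Lemma \ref{Lemma: SecondEquiv} (giving $(1)\Leftrightarrow(2)$ since $\mathcal{T}_\ast=\alpha_\ast^{-1}$) together with the Dade-theorem identification $K_0^{gr}(L(E))\cong K_0(L(E)_0)$ as ordered $\Z[x,x^{-1}]$-modules recalled earlier in Section \ref{sect:Equival} (giving $(1)\Leftrightarrow(3)$). Your write-up simply makes explicit the bookkeeping the paper leaves to the reader; the only cosmetic difference is that for strong gradedness the paper would cite \cite[Theorem 4]{Hazrat} rather than deriving it from Lemma \ref{lem:full=no-sinks}.
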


\begin{nota}
{\rm We will denote the condition in Proposition \ref{Corollary: TheGoodOne}(1) as
$$(K_0(L(E)_0), K_0^+(L(E)_0))\cong_{\Z[x,x^{-1}]} (K_0(L(F)_0), K_0^+(L(F)_0)).$$
and that in Proposition \ref{Corollary: TheGoodOne}(2) as
$$(K_0(L(E)_0), K_0^+(L(E)_0), \alpha_{\ast})\cong (K_0(L(F)_0), K_0^+(L(F)_0), \beta_{\ast}).$$}
\end{nota}

In order to give the next equivalent condition, we need to recall some facts about conjugacy of subshifts of finite type \cite{W}.
Recall \cite[Definition 7.3.1]{L-M} that given $A,B$ nonnegative integral square matrices, and $\ell \geq 1$, a shift equivalence of
lag $\ell$ from $A$ to $B$ is a pair of rectangular nonnegative integral matrices $(S,R)$ satisfying the four shift equivalence equations:
\begin{enumerate}
\item (i) $AR=RB$; (ii) $SA=BS$.
\item (i) $A^{\ell}=RS$; (ii) $B^{\ell}=SR$.
\end{enumerate}
We will denote this equivalence by $A\sim_{SE} B$.

In order to relate this equivalence with our problem, recall that if
$E$ is a finite graph with no sinks, and $A:=A_E^t$ denotes the
transpose of the adjacency matrix, then the ultramatricial algebra
$L(E)_0$ is a direct limit of matricial algebras $L(E)_{0,n}$
($n\geq 0$) whose connecting maps are induced by the (CK2) relation
(see e.g. \cite[Proof of Theorem 5.3]{AMFP}). Moreover, by the
picture of the Bratteli diagram associated to this ultramatricial
algebra, $K_0(L(E)_0)\cong \varinjlim (K_0(L(E)_{0,n}), A)$, where
$K_0(L(E)_{0,n})\cong \Z^N$ for all $n\geq 0$, and $N=\vert
E^0\vert$. We will denote $G_m:=K_0(L(E)_{0,m})$ ($m\geq 0$) and
$G:=K_0(L(E)_{0})$. Then, we have a commutative diagram
$$\xymatrix@!=3.5pc{G_0 \ar[r]^A \ar[d]_A & G_1 \ar[r]^A \ar[d]_A & G_2 \ar[r]^A \ar[d]_A &G_3  \ar[d]_A  \ar @{.>}[rr] & &  G\ar[d]^{\delta_A}\\
G_0 \ar[r]_A  & G_1 \ar[r]_A  & G_2 \ar[r]_A  &G_3  \ar @{.>}[rr] & &  G   }
$$
that induces a group isomorphism $\delta_A:G\rightarrow G$ by multiplication by $A$ on the direct limit.

With that in mind, we can show the following result

\begin{lem}\label{Lemma: ThirdEquiv}
Let $E$ be a finite essential graph, let $A$ be
the transpose of the incidence matrix of $E$, and let $\delta_A$ and
$\alpha_{\ast}$ be the automorphisms of $K_0(L(E)_0)$ defined above.
Then, $\alpha_{\ast}$ is the inverse of $\delta_A$.
\end{lem}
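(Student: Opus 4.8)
The plan is to read off the action of the corner isomorphism $\alpha$ on $K_0$ from what it does to the standard filtration $L(E)_0=\bigcup_{n\ge 0}L(E)_{0,n}$. Recall (see \cite[Proof of Theorem 5.3]{AMFP}) that, as $E$ has no sinks, $L(E)_{0,n}$ is the linear span of the elements $\mu\nu^*$ with $\length(\mu)=\length(\nu)=n$, that $L(E)_{0,n}\cong\bigoplus_{v\in E^0}M_{k(n,v)}(K)$ where $k(n,v)$ is the number of paths of length $n$ with range $v$, and that the elements $\mu\nu^*$ with $\length(\mu)=\length(\nu)=n$ and $r(\mu)=r(\nu)=v$ form a complete set of matrix units for the $v$-th block; in particular a diagonal idempotent $\mu\mu^*$ with $\length(\mu)=n$ and $r(\mu)=v$ is a minimal projection in that block. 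Writing $\iota_n\colon G_n=K_0(L(E)_{0,n})\to G$ for the canonical maps, we identify each $G_n$ with $\Z^N$ by letting the standard basis vector indexed by $v$ correspond to the class of a minimal projection in the $v$-th block of $L(E)_{0,n}$; under these identifications the connecting maps become multiplication by $A$, so that $\iota_n=\iota_{n+1}\circ A$, while $\delta_A$ is characterized by $\delta_A\circ\iota_n=\iota_n\circ A$.

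First I would compute $\alpha$ on a diagonal minimal projection. Fix a path $\mu$ of length $n$ with $r(\mu)=v$, and let $v_j=s(\mu)$. Since $t_+=\sum_i e_i$ with $r(e_i)=v_i$ and the $v_i$ are pairwise distinct, the only nonzero term of $t_+\mu$ is $e_j\mu$, a path of length $n+1$ with range $v$; dually $\mu^*t_-=(e_j\mu)^*$. Hence
$$\alpha(\mu\mu^*)=t_+(\mu\mu^*)t_-=(e_j\mu)(e_j\mu)^*,$$
a minimal projection in the $v$-th block of $L(E)_{0,n+1}$. So $\alpha$ carries a minimal projection of the $v$-th block at level $n$ to one at level $n+1$. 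Since $\alpha$ is an isomorphism of $L(E)_0$ onto the full corner $pL(E)_0p$ (Lemma \ref{lem:full=no-sinks}), and $\alpha_*$ denotes the composite of $K_0(\alpha)$ with the isomorphism $K_0(pL(E)_0p)\to K_0(L(E)_0)$ induced by the corner inclusion, which on projection classes just reinterprets $\alpha(q)$ inside $L(E)_0$, we conclude that
$$\alpha_*\circ\iota_n=\iota_{n+1}\colon\Z^N=G_n\longrightarrow G\qquad(n\ge 0).$$

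It remains to put the two identities together. For each $n\ge 0$,
$$\delta_A\circ\alpha_*\circ\iota_n=\delta_A\circ\iota_{n+1}=\iota_{n+1}\circ A=\iota_n,$$
using $\delta_A\circ\iota_{n+1}=\iota_{n+1}\circ A$ in the middle step and $\iota_{n+1}\circ A=\iota_n$ at the end. Thus $\delta_A\circ\alpha_*$ agrees with $\mathrm{id}_G$ on $\iota_n(G_n)$ for every $n$; since $G=\bigcup_{n\ge 0}\iota_n(G_n)$, this forces $\delta_A\circ\alpha_*=\mathrm{id}_G$, and as both maps are automorphisms of $G$ we also get $\alpha_*\circ\delta_A=\mathrm{id}_G$. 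Hence $\alpha_*=\delta_A^{-1}$.

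The only delicate point is the bookkeeping just indicated: one must keep straight that $\alpha$ lands in the proper corner $pL(E)_0p$ and that ``$\alpha_*$'' already incorporates the full-corner identification, so that the class of $\alpha(\mu\mu^*)$ in $G$ genuinely equals $[(e_j\mu)(e_j\mu)^*]$; granting this, everything reduces to the elementary matching of minimal projections across consecutive levels of the Bratteli diagram $X_A$. (One could instead invoke Lemma \ref{Lemma: SecondEquiv} and reduce to proving $\mathcal T_*=\delta_A$, but working directly with $\alpha$ as above is the shortest route.)
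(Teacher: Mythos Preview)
Your proof is correct and follows essentially the same approach as the paper's. The paper simply asserts that ``the map corresponding to $\alpha$ at each step of the direct limit is the identity map $\mathrm{id}\colon G_k\to G_{k+1}$,'' whereas you supply the explicit computation $\alpha(\mu\mu^*)=(e_j\mu)(e_j\mu)^*$ justifying this; both then finish by the same one-line verification that $\delta_A\circ\alpha_*\circ\iota_n=\iota_n$.
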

\begin{proof}
Let $G=\varinjlim (G_n, A)$, where $G_n=\Z^{\vert E^0\vert}$ for all $n\geq 0$.
The map corresponding to $\alpha: L(E)_0\rightarrow L(E)_0$ at each step of the direct limit is
the identity map $\mbox{id}: G_k\rightarrow G_{k+1}$. Hence, we have a commutative diagram
$$\xymatrix@!=3.5pc{G_0 \ar[r]^A \ar[d]_A & G_1 \ar[r]^A \ar[d]_A & G_2 \ar[r]^A \ar[d]_A &G_3  \ar[d]_A  \ar @{.>}[rr] & &  G\ar @/^/[d]^{\delta_A}\\
G_0 \ar[r]_A \ar[ru]^{id} & G_1 \ar[r]_A \ar[ru]^{id} & G_2 \ar[r]_A \ar[ru]^{id} &G_3  \ar @{.>}[rr] & &  G \ar @/^/[u]^{\alpha_*}  }
$$
Since both maps are automorphisms of $K_0(L(E)_0)$, it suffices to
show that $\delta_A \circ \alpha_{\ast}=\mbox{Id}_G$. Following the
diagram,
$$(\delta_A \circ \alpha_{\ast})(\iota_{n, \infty}(g))=
\delta_A (\alpha((\iota_{n, \infty}(g))))=\delta_A(\iota_{n+1, \infty}(g))=\iota_{n+1, \infty}(Ag)=\iota_{n+1,\infty}\iota_{n,n+1}(g)=\iota_{n, \infty}(g),$$
where $\iota_{n, \infty}\colon G_n \to G$ is the canonical map.
This concludes the proof.
\end{proof}

\begin{rema}
\label{rem:T=deltaA} {\rm It follows from Lemmas \ref{Lemma:
SecondEquiv} and \ref{Lemma: ThirdEquiv} that $\mathcal
T_*=\delta_A$ when $E$ is a finite essential graph. In fact, it is easy to prove directly that this equality holds
for any finite graph without sinks: Setting $A_i=L(E)_i$, we have, for $v\in E^0$,
$$\mathcal T _*([vA_0])= [vA_0\otimes _{A_0} A_1]=[vA_1]=\sum _{e\in s^{-1}(v)} [eA_0]=\sum _{e\in s^{-1}(v)} [r(e)A_0].
$$
Similarly , if $| \gamma |\ge 1$, we have
$$
\mathcal T _*([\gamma \gamma^*A_0])  =[\gamma\gamma^*A_1]= \sum _{e\in s^{-1}(r(\gamma))} [\gamma ee^*\gamma ^*A_1]
 = \sum _{e\in s^{-1}(r(\gamma))} [\gamma ' ee^*(\gamma') ^*A_0] \, , $$
where $\gamma = f\gamma '$ for $f\in E^1$ and $| \gamma ' | =| \gamma | -1$. }
\qed\end{rema}

\medskip

It is important to notice that
$$(K_0(L(E)_0), K_0^+(L(E)_0),  \delta_{A_E^t})\cong (K_0(L(F)_0), K_0^+(L(F)_0),  \delta_{A_F^t})$$
is equivalent to $A_E^t\sim_{SE} A_F^t$ with lag $l$ for some $l \geq 1$ \cite[Theorem 7.5.8]{L-M}. Now, we can state the main result of this section

\begin{theor}\label{Theorem: MainFirstSection}
Let $E,F$ be finite essential graphs, let
$\alpha, \beta$ be the respective corner isomorphisms. Set $A= A_E^t$ and $B= A_F^t$, and let $\delta_A$,
$\delta_B$ be the automorphisms of $K_0(L(E)_0)$ and $K_0 (L(F)_0)$ defined above. Then,
the following are equivalent:
\begin{enumerate}
\item $A\sim_{SE}B$.
\item $ (K_0(L(E)_0), K_0^+(L(E)_0))\cong_{\Z[x,x^{-1}]} (K_0(L(F)_0), K_0^+(L(F)_0))$.
\item $(K_0(L(E)_0), K_0^+(L(E)_0), \alpha_{\ast})\cong (K_0(L(F)_0), K_0^+(L(F)_0), \beta_{\ast})$.
\item $(K_0(L(E)_0), K_0^+(L(E)_0), \delta_A)\cong (K_0(L(F)_0), K_0^+(L(F)_0), \delta_B)$.
\item There is an ordered  $\Z[x,x^{-1}]$-module isomorphism $K_0^{gr}(L(E))\cong
K_0^{gr}(L(F))$.
\end{enumerate}
Moreover, the equivalences also hold in the order-unit
preserving category. \hspace{\fill}$\Box$
\end{theor}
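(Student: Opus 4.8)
The plan is to assemble the statement from the equivalences already established in this section, together with the classical dimension-group description of shift equivalence. Conditions (2), (3) and (5) are, respectively, conditions (1), (2) and (3) of Proposition~\ref{Corollary: TheGoodOne} (via the Notation following that proposition), so their mutual equivalence --- including in the order-unit preserving category --- is already in hand. It therefore remains only to connect (4) with (3) and (4) with (1).

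For the equivalence (3)~$\Leftrightarrow$~(4) I would argue as follows. By Lemma~\ref{Lemma: ThirdEquiv} one has $\alpha_*=\delta_A^{-1}$ as an automorphism of $K_0(L(E)_0)$ and $\beta_*=\delta_B^{-1}$ as an automorphism of $K_0(L(F)_0)$, and all four maps are order automorphisms (for $\alpha_*$ this is immediate from Lemmas~\ref{Lemma: SecondEquiv} and~\ref{lem:full=no-sinks}). Hence a group isomorphism $\Phi\colon(K_0(L(E)_0),K_0^+(L(E)_0))\to(K_0(L(F)_0),K_0^+(L(F)_0))$ satisfies $\Phi\circ\alpha_*=\beta_*\circ\Phi$ if and only if it satisfies $\Phi\circ\delta_A=\delta_B\circ\Phi$, since passing to inverses leaves unchanged both the order condition and a possible order-unit condition. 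This gives (3)~$\Leftrightarrow$~(4), also in the order-unit preserving category.

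For the equivalence (1)~$\Leftrightarrow$~(4) I would invoke the observation recorded just before the statement of the theorem. The Bratteli-diagram picture of the ultramatricial algebra $L(E)_0$ identifies the triple $(K_0(L(E)_0),K_0^+(L(E)_0),\delta_A)$, equipped with the canonical order-unit $[1_{L(E)_0}]$, with the dimension triple of the nonnegative integral matrix $A=A_E^t$ in the sense of \cite[Section~7.5]{L-M}: each level of the diagram is $\Z^{|E^0|}$ with connecting map $A$, and the positive cone consists of the elements becoming nonnegative after finitely many applications of $A$. The same applies to $F$ and $B$. Then \cite[Theorem~7.5.8]{L-M} says precisely that $A\sim_{SE}B$ (a shift equivalence of some lag $\ell\ge 1$) if and only if these dimension triples are isomorphic --- equivalently, condition (4) holds --- and the same statement is valid with order-units. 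Chaining the implications yields (1)~$\Leftrightarrow$~(4)~$\Leftrightarrow$~(3)~$\Leftrightarrow$~(2)~$\Leftrightarrow$~(5), both as stated and in the order-unit preserving category.

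The only point requiring genuine care --- and the one I would write out in full --- is the identification used in the (1)~$\Leftrightarrow$~(4) step: one must verify that under the isomorphism $K_0(L(E)_0)\cong\varinjlim(\Z^{|E^0|},A)$ the ordered-group structure, the canonical order-unit and the automorphism $\delta_A$ correspond to the dimension group, its distinguished order-unit and the dimension automorphism attached to $A$ in \cite{L-M}. This is routine from the explicit Bratteli diagram, but it is exactly the place where the Leavitt-path-algebra side and the symbolic-dynamics side are glued, so it merits an explicit argument; all the remaining steps are bookkeeping with lemmas already proved.
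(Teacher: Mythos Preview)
Your proposal is correct and follows essentially the same route as the paper: the theorem is stated with a \hspace{\fill}$\Box$ precisely because it is assembled from Proposition~\ref{Corollary: TheGoodOne} (giving (2)$\Leftrightarrow$(3)$\Leftrightarrow$(5)), Lemma~\ref{Lemma: ThirdEquiv} (giving (3)$\Leftrightarrow$(4) via $\alpha_*=\delta_A^{-1}$), and the sentence immediately preceding the theorem invoking \cite[Theorem~7.5.8]{L-M} for (1)$\Leftrightarrow$(4). Your added remark that the identification of $(K_0(L(E)_0),K_0^+(L(E)_0),\delta_A)$ with the dimension triple of $A$ is the one place deserving explicit care is well taken, though the paper treats this as implicit in the Bratteli-diagram description given earlier in the section.
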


For a non-negative matrix $A\in M_N(\Z^+)$, we denote by $(\Delta_A, \Delta_A^+, \delta _A)$ the dimension triple of $A$.
Here $\Delta _A= \varinjlim (G_n, A)$, with $G_n=\Z ^N$ for all $n$, and $\delta _A$ is induced by multiplication by $A$ at each level, so that
$K_0(L(E)_0)\cong \Delta_A$ and the dimension triple is isomorphic to $(K_0(L(E)_0), K_0^+(L(E)_0), \delta_A)$, where $E$ is the graph with incidence matrix $A^t$.

Note that every shift equivalence $(S,R)\colon A\sim_{SE} B$ of lag $l$, where $A\in M_N(\Z^+)$ and $B\in M_M(\Z^+)$,
induces an isomorphism $\Phi$ of dimension triples such that $ \Phi (\iota_{n,\infty}(x))= \iota_{m+n,\infty} (Sx)$ for $x\in G_n=\Z^N$, where $m$ is a fixed non-negative integer.
The inverse map $\Psi \colon \Delta _B\to \Delta _A$ is defined by $\Psi (\iota _{m+n,\infty}(x)) = \iota_{n+l,\infty} (Rx)$ for $x\in \Z^M$ and $n\ge 0$.
(See the diagram on \cite[page 234]{L-M} for an illustrative picture in the case $l=2$.)
Conversely, given an isomorphism of dimension triples $ \Phi \colon (\Delta_A,\Delta_A^+,\delta_A)\to (\Delta_B,\Delta_B^+,\delta_B)$, there is
a shift equivalence $(S,R)\colon A\sim_{SE} B$ of lag $l$ for some $l\ge 1$ and $m\ge 0$ such that $\Phi$ is the map described above (see \cite[Theorem 7.5.8]{L-M}).

Now we consider the special case given by a strong shift equivalence.

Let $A\in M_N(\Z^+)$ and $B\in M_M(\Z^+)$ be two non-negative matrices.
An {\it elementary shift equivalence} $(S,R): A\approx B$ from $A$ to $B$ is a pair of non-negative integral matrices $R$ and $S$ such that
$A=RS$ and $B=SR$. Note that this is just a shift equivalence of lag $l=1$.  A {\it strong shift equivalence of lag $l$}
from $A$ to $B$ is a sequence of $l$ elementary shift equivalences
\begin{equation}
\label{eq:sse}
(S_1,R_1)\colon A=A_0\approx A_1, (S_2,R_2) \colon A_1\approx A_2, \dots , (S_l,R_l)\colon A_{l-1} \approx A_l =B,
\end{equation}
see \cite[Definition 7.2.1]{L-M}.
Note that each strong shift equivalence as above induces a shift equivalence of lag $l$ from $A$ to $B$, and thus
an isomorphism $\Phi$ from the
dimension triple $(\Delta_A, \Delta _A^+, \delta_A)$
associated to $A$ to the dimension triple $(\Delta_B, \Delta_B^+, \delta_B)$ associated to $B$ \cite[7.5]{L-M},  such that
$$ \Phi (\iota_{n,\infty} (x)) = \iota_{m+n,\infty} ( S_l\cdots S_1x )$$
for $x\in \Z^N$, where $m$ is a fixed non-negative integer. By Theorem \ref{Theorem: MainFirstSection}, $\Phi$ induces an isomorphism,
also denoted by $\Phi$, from $K_0^{gr}(L(E))$ onto $K_0^{gr}(L(F))$.
Letting $S= S_l\cdots S_1$, $R= R_l\cdots R_1$,  we say that $(S,R)$ is a strong shift equivalence from $A$ to $B$. Two non-negative integral matrices $A$ and $B$ are
said to be {\it strongly shift equivalent} if there is a strong shift equivalence from $A$ to $B$.

In the situation above, we say that $\Phi$ is the morphism determined by $((S,R),m)$, or just by $(S,m)$. (Observe that $\Phi$ only depends on $S$ and $m$.)

Some examples can be useful to clarify these concepts.

\begin{exems}
 \label{exam:Moritamatrices}
 \begin{enumerate}
{\rm  \item Let $E$ be an essential finite graph, and consider the corner isomorphism $\alpha \colon L(E) \to pL(E)p$ built before.
  Then $\alpha$ induces a graded Morita-equivalence from $L(E)$ to itself. This corresponds to the elementary shift equivalence $(I_N, A)$,
  with $m=1=l$, where $A=A_E^t$. The inverse is given by the elementary shift equivalence $(A, I_N)$ with $m=0$ and $l=1$. It is realized by the
  graded Morita equivalence inverse to the one defined by $\alpha$.
  \item Let
   $$E \colon \hspace{1truecm} \xymatrix{\bullet^{v} \ar@(dl,ul)
\ar@/^/[r] & \bullet^{w} \ar@/^/[l]} \qquad \text{ and  }\qquad F\colon \hspace{1truecm}\xymatrix{\bullet^{v_1} \ar@(dl,ul) \ar[r] & \bullet^{w_1}
\ar@/^/[dl] \\ \bullet^{v_2} \ar[u] \ar@/^/[ur]}$$ be the graphs considered in \cite[Example 1.10]{AbAnhLouP2}, where $F$ is obtained from $E$ by an in-splitting.
  Then the corresponding graded Morita equivalence induces at the level of $K_0^{gr}$ a strong shift equivalence $(S,R)$, with
  $S =\begin{pmatrix} 1 & 0 \\ 0 & 1 \\ 1 & 0 \end{pmatrix}$, $R= \begin{pmatrix}  1 & 1 & 0 \\ 0 & 0 & 1
  \end{pmatrix}$, with $m=l= 1$. It is important to remark here that, since $m=1$, the induced map $\Phi \colon K_0^{gr}(L(E))  \to K_0^{gr}(L(F))$
  does not preserve the canonical order-units. This corresponds to the fact that the process described in \cite{AbAnhLouP2} gives only a graded Morita equivalence
  in this case, but not an isomorphism. Considering the same pair $(S,R)$, but with $m=0$, $l=1$, one obtains another map
  $\Phi' \colon K_0^{gr}(L(E))  \to K_0^{gr}(L(F))$ which does preserve the order-unit. We will show later that this map can be lifted to a graded isomorphism
  from $L(E)$ to $L(F)$.
\item We consider now the graphs
$$E \colon  \hspace{1truecm} \xymatrix{\bullet^{v} \ar@(dl,ul)
\ar@/^/[r] & \bullet^{w} \ar@/^/[l]} \qquad \text{ and } \qquad  F' \colon  \hspace{1truecm} \xymatrix{\bullet^{v^1} \ar[d] \ar@(dl,ul) & \bullet^{w^1}
\ar@/^/[dl] \ar[l] \\ \bullet^{v^2} \ar@/^/[ur]}$$ arising in \cite[Example 1.13]{AbAnhLouP2}. The graph $E$ is the same as the one considered in the previous example, but
now the graph $F'$ is obtained from $E$ by an out-splitting instead of an in-splitting. In this case the corresponding graded Morita equivalence is indeed a graded isomorphism.
At the level of $K_0^{gr}$, it corresponds to the strong shift equivalence $(S,R)$, with $S= \begin{pmatrix} 1 & 0 \\ 1 & 0 \\ 0 & 1 \end{pmatrix}$,
$R=\begin{pmatrix} 1 & 0 & 1  \\ 0 & 1  & 0  \end{pmatrix}$, with $m=0$, $l=1$.}
   \end{enumerate}
\end{exems}

The first part of the following result has been proved by Hazrat \cite[Proposition 15(2)]{Hazrat2}.

\begin{theor}
\label{thm:gr-Morita}
Let $E$ and $F$ be finite essential graphs, and set $A=A_E^t$, $B=A_F^t$. If $A$ and $B$ are strongly shift equivalent then
$L(E)$ and $L(F)$ are graded Morita equivalent. Moreover, if $(S,R)$ is a strong shift equivalence from $A$ to $B$ and $m$ is any non-negative integer,
then there is a graded Morita equivalence $\Psi \colon \mbox{Gr}-L(E)\longrightarrow \mbox{Gr}-L(F)$ such that $K_0^{gr}(\Psi)$ is the isomorphism induced by
$((S,R),m)$.
 \end{theor}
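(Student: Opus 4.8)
The plan is to reduce the statement to the case of a single elementary shift equivalence with shift parameter $m=0$, and to settle that case by an Elliott-type intertwining argument carried out at the level of the degree-zero subalgebras and equivariantly with respect to the corner isomorphisms.

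First I would record the bookkeeping behind the reduction. Graded Morita equivalences compose, and the assignment $\Psi\mapsto K_0^{gr}(\Psi)$ sends composites to composites and naturally isomorphic functors to the same homomorphism. By Example~\ref{exam:grMoritaequi}(1) and Example~\ref{exam:Moritamatrices}(1), the self-equivalence of $\mathrm{Gr}\text{-}L(F)$ attached to the corner isomorphism $\beta$ induces on $K_0^{gr}(L(F))\cong K_0(L(F)_0)$ precisely the automorphism $\beta_*$. Combining this with a direct computation from the explicit formula $\Phi(\iota_{n,\infty}(x))=\iota_{m+n,\infty}(S_l\cdots S_1x)$ for the morphism determined by a strong shift equivalence, one checks that $\Phi$ equals $\beta_*^m$ post-composed with the composition of the morphisms determined by $((S_i,R_i),0)$, where $A=A_0\approx A_1\approx\cdots\approx A_l=B$ is the chosen decomposition into elementary shift equivalences $(S_i,R_i)\colon A_{i-1}\approx A_i$. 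Hence it suffices to produce, for a single elementary shift equivalence $(S,R)\colon A\approx B$, a graded Morita equivalence $\Psi\colon\mathrm{Gr}\text{-}L(E)\to\mathrm{Gr}\text{-}L(F)$ whose induced map $K_0^{gr}(\Psi)$ is the morphism determined by $((S,R),0)$.

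For this, since $E$ and $F$ are essential and in particular have no sinks, $L(E)$ and $L(F)$ are strongly $\Z$-graded, so Dade's theorem identifies $\mathrm{Gr}\text{-}L(E)$ with $\mathrm{Mod}\text{-}L(E)_0$ compatibly with the suspension functor $\mathcal{T}_1$, and likewise for $F$; and by Lemma~\ref{Lemma: FirstEquiv} the functor $\mathcal{T}_1$ on $\mathrm{Mod}\text{-}L(E)_0$ is naturally isomorphic to $M\mapsto M^\alpha$. Thus it suffices to build a Morita equivalence $H\colon\mathrm{Mod}\text{-}L(E)_0\to\mathrm{Mod}\text{-}L(F)_0$ together with a natural isomorphism between $H\circ(-)^\alpha$ and $(-)^\beta\circ H$, and to verify that $K_0(H)$ is multiplication by $S$ on the dimension groups with zero index shift. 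Recall that $L(E)_0=\varinjlim(L(E)_{0,n},\phi_n)$ and $L(F)_0=\varinjlim(L(F)_{0,n},\psi_n)$ have matricial stages with $K_0(L(E)_{0,n})\cong\Z^N$, $K_0(L(F)_{0,n})\cong\Z^M$, $K_0(\phi_n)=A$, $K_0(\psi_n)=B$, and that $\alpha$ and $\beta$ act on these direct systems as the index-shift maps (Lemma~\ref{Lemma: ThirdEquiv}). Since $A=RS$ and $B=SR$, and homomorphisms between matricial algebras are classified up to conjugacy by their effect on $K_0$, one builds inductively --- working in the stabilizations in order to absorb the mismatch of order-units --- a zig-zag
$$\cdots\xrightarrow{\phi_{n-1}}L(E)_{0,n}\xrightarrow{u_n}L(F)_{0,n}\xrightarrow{v_n}L(E)_{0,n+1}\xrightarrow{u_{n+1}}L(F)_{0,n+1}\xrightarrow{v_{n+1}}\cdots$$
with $K_0(u_n)=S$, $K_0(v_n)=R$, $v_n\circ u_n=\phi_n$ and $u_{n+1}\circ v_n=\psi_n$. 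Taking direct limits along the $E$-indexed and the $F$-indexed cofinal subsystems identifies the limit of the zig-zag with stable forms of both $L(E)_0$ and $L(F)_0$, which yields $H$; and $K_0(H)$ is the direct limit of the stage maps $S$, that is, the morphism determined by $((S,R),0)$.

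The step I expect to be the main obstacle is showing that this intertwining can be arranged \emph{equivariantly}, i.e. that the $u_n$ and $v_n$ can be chosen so that the induced $H$ carries $(-)^\alpha$ to $(-)^\beta$. Concretely, the ``shift index by two levels'' endomorphism of the limit of the zig-zag must restrict, under the two identifications above, to $\alpha$ on $L(E)_0$ and to $\beta$ on $L(F)_0$; producing it requires interleaving into the zig-zag an auxiliary compatible family of stage maps that realize the identity on $K_0$ and make all the relevant squares commute, which one again handles with the classification of maps between matricial algebras and repeated absorption of conjugating units. Granting this, transporting $H$ and the natural isomorphism $H\circ(-)^\alpha\cong(-)^\beta\circ H$ back through Dade's theorem produces the graded Morita equivalence $\Psi$ with $K_0^{gr}(\Psi)$ the morphism determined by $((S,R),0)$, and the reduction of the second paragraph then gives the full statement for arbitrary strong shift equivalences and arbitrary $m\ge 0$.
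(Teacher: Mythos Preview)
Your reduction to a single elementary shift equivalence with shift parameter $m=0$ is correct and cleanly organised: the bookkeeping with $\beta_*^m$ and the composition of the stage maps $((S_i,R_i),0)$ does reproduce the morphism determined by $((S,R),m)$, and the self-equivalence coming from the corner isomorphism (Example~\ref{exam:Moritamatrices}(1)) handles the shift in $m$, just as in the paper. This is also where the overlap with the paper's argument ends.

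For the core case --- lifting a single elementary shift equivalence $(S,R)\colon A\approx B$ to a graded Morita equivalence --- the paper takes a completely different and much shorter route. It invokes Williams' theorem \cite{W} to factor the strong shift equivalence as a finite chain of in-splittings, out-splittings, and their inverses, and then quotes \cite[Propositions~1.11 and~1.14]{AbAnhLouP2}, where each such graph move is already shown to yield an explicit graded Morita equivalence with known effect on $K_0^{gr}$. Composing these and adjusting $m$ at the end finishes the proof; no intertwining at the level of $\mathcal M(E)$ and $\mathcal M(F)$ is ever needed.

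Your alternative --- an Elliott-type zig-zag between the matricial stages of $\mathcal M(E)$ and $\mathcal M(F)$, transported through Dade --- is a reasonable strategy, and the non-equivariant part (a Morita equivalence $H$ with $K_0(H)$ given by $S$) goes through by standard AF techniques. But the entire content sits in the step you yourself flag as the main obstacle: arranging $H\circ(-)^\alpha\cong(-)^\beta\circ H$. Concretely this is a bimodule isomorphism $L(E)_1\otimes_{\mathcal M(E)}P\cong P\otimes_{\mathcal M(F)}L(F)_1$, not merely an identity in $K_0$, and the sentence about ``interleaving an auxiliary compatible family of stage maps'' with ``repeated absorption of conjugating units'' does not supply it. At each stage you must simultaneously enforce $v_n\circ u_n=\phi_n$, $u_{n+1}\circ v_n=\psi_n$, \emph{and} the equivariance squares with $\alpha|_{\mathcal M(E)_n}$ and $\beta|_{\mathcal M(F)_n}$; these over-determine the maps up to inner automorphism, and the coherence of the correcting units across stages is precisely what has to be checked. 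That is exactly the kind of careful diagram-lifting the paper carries out in Section~\ref{ProofMainTheorem} for the harder isomorphism theorem --- and even there the outcome is only $L^g(E)\cong_{gr}L(F)$ for some locally inner $g$. So as written the proposal has a genuine gap at its crucial point; closing it along your lines would essentially reproduce a large part of Section~\ref{ProofMainTheorem}, whereas the paper's proof of this particular theorem sidesteps the issue entirely via \cite{W} and \cite{AbAnhLouP2}.
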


\begin{proof}
Let $(S,R)$ be a strong shift equivalence from $A$ to $B$ and let $m$ be a non-negative integer. By \cite{W}, there is a finite
set of finite essential graphs $E_0,E_1,\dots , E_n$,
 with $E_0=E$ and $E_n=F$, such that each $E_i$ is obtained from $E_{i-1}$ by an out-splitting, or an in-splitting, or the
 inverse of one of these operations. (See also \cite[Chapters 2 and 7]{L-M}). Now there are matrices $S_i,R_i$ corresponding to these
 operations, so that $A_{E_{i-1}}^t=R_iS_i$ and $A_{E_i}^t= S_iR_i$ for $i=1,\dots ,n$ and $S=S_n\cdots S_1$ and $R=R_n\cdots R_1$.
  By \cite[Propositions 1.11 and 1.14]{AbAnhLouP2} each of these operations induces a
graded Morita equivalence
$$\Psi _i\colon \mbox{Gr}-L(E_{i-1})\longrightarrow \mbox{Gr}-L(E_i)$$
such that the induced map $K_0^{gr}(\Psi_i)$ is the map corresponding to $((S_i,R_i), m_i)$ for some $m_i\ge 0$ (where we identify $(K_0^{gr}(L(E_i)),
K_0^{gr}(L(E_i))^+, \mathcal T _*) $
with the corresponding dimension triple $(\Delta _{A_{E_i}^t}, \Delta _{A_{E_i}^t}^+, \delta_{A_{E_i}^t}))$.
Now $\Psi' := \Psi _n\circ \cdots \circ \Psi _1$ is a graded Morita equivalence from $\mbox{Gr}-L(E)$ to $\mbox{Gr}-L(F)$ and
$$ K_0^{gr}(\Psi) = K_0^{gr}(\Psi _n)\cdots K_0^{gr}(\Psi _1) = (S_n\cdots S_1, m_1+\cdots +m_n)= (S,m'),$$
where $m'=m_1+\cdots + m_n$.

Now if $m'=m$ we have reached the desired map $((S,R),m)$. If $m'<m$, consider
 $$((S_{n+1}, R_{n+1}), m-m') = ((I_M, B^{m-m'}), m-m'),$$
 with lag $m-m'$, see Example \ref{exam:Moritamatrices}(1). Set $m_{n+1}:= m-m'$. Then the composition of all $(S_i,m_i)$,
$i=1,\dots ,n+1$,  is precisely the map from the dimension triple of $A$ to the dimension triple of $B$ given by $(S, m)$. Similarly, if $m'>m$, consider
$$((S_{n+1}, R_{n+1}), m'-m)= ((B^{m'-m}, I_M),0),$$
with lag $m'-m$, see Example \ref{exam:Moritamatrices}(1). Then we again get that the composition of all $(S_i,m_i)$ (with $m_{n+1}:= m'-m$) is precisely $(S, m)$.

In any case, by Example \ref{exam:Moritamatrices}(1), there is a graded Morita equivalence $\Psi_{n+1}\colon \mbox{Gr}-L(F)\to \mbox{Gr}-L(F)$ such that
$K_0^{gr}(\Psi_{n+1})= (S_{n+1}, m_{n+1})$. Now set
$$\Psi: = \Psi_{n+1}\circ  \Psi '\colon \mbox{Gr}-L(E)\longrightarrow  \mbox{Gr}-L(F)$$
be the composition, which is a graded Morita equivalence.
(Of course $\Psi_{n+1}=\text{Id}$ if $m=m'$.) Then clearly $K_0^{gr}(\Psi)$ is the map induced by $((S,R),m)$.
 \end{proof}

Now we can make the following definition.

\begin{defi}
 \label{def:ssequeiv}
{\rm  Let $E$ and $F$ be finite essential graphs, and set $A=A_E^t$, $B=A_F^t$. We say that $L(E)$ and $L(F)$
 are {\it strongly shift $K_0^{gr}$-isomorphic} if there exists a strong shift equivalence $(S,R)$ from $A$ to $B$ and a non-negative integer $m$ such that
 the induced isomorphism $\Phi \colon K_0^{gr}(L(E))\to K_0^{gr}(L(F))$ satisfies that $\Phi ([1_{L(E)}])=[1_{L(F)}]$.}
 \end{defi}

In this case we can deduce graded isomorphism using  \cite{W}. First, we need a lemma.

\begin{lem}
 \label{lem:cancellationinK0gr}
 Let $E$ be a finite graph without sinks. Assume that $P,Q$ are graded finitely generated projective right $L(E)$-modules such that
 $[P]= [Q]$ in $K_0^{gr}(L(E))$. Then $P$ and $Q$ are graded-isomorphic.
\end{lem}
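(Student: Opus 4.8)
The plan is to transfer the question to the ultramatricial algebra $L(E)_0$ and there use cancellation of finitely generated projective modules. Since $E$ is a finite graph without sinks, $L(E)$ is strongly $\Z$-graded by \cite[Theorem 4]{Hazrat}; hence, by Dade's Theorem \cite[Theorem 3.1.1]{Nasta_VanOs}, the degree-zero functor $(-)_0\colon \mbox{Gr}-L(E)\to \mbox{Mod}-L(E)_0$ is an equivalence of categories with inverse $-\otimes_{L(E)_0}L(E)$, as recalled in Section \ref{sect:Equival}. This functor is additive, so it carries graded finitely generated projective $L(E)$-modules to finitely generated projective $L(E)_0$-modules and induces the isomorphism $K_0^{gr}(L(E))\cong K_0(L(E)_0)$ sending $[X]$ to $[X_0]$ (keeping in mind that the notion of finitely generated projective object in $\mbox{Gr}-L(E)$ agrees with the ungraded one, as recalled in Section \ref{sect:Equival}). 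Thus the hypothesis $[P]=[Q]$ in $K_0^{gr}(L(E))$ gives $[P_0]=[Q_0]$ in $K_0(L(E)_0)$.

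Next I would recall that $L(E)_0$ is an ultramatricial $K$-algebra: $L(E)_0=\varinjlim L(E)_{0,n}$, where each $L(E)_{0,n}$ is a finite direct product of matrix algebras over $K$ (see e.g.\ \cite[Proof of Theorem 5.3]{AMFP}). Consequently $V(L(E)_0)=\varinjlim V(L(E)_{0,n})$, and since each $V(L(E)_{0,n})\cong (\Z^+)^{|E^0|}$ is cancellative and a direct limit of cancellative monoids is again cancellative, $V(L(E)_0)$ is cancellative. Equivalently, $L(E)_0$ has cancellation of finitely generated projective modules and the natural map $V(L(E)_0)\to K_0(L(E)_0)$ is injective (its image being the positive cone of the dimension group $K_0(L(E)_0)$). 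Therefore $[P_0]=[Q_0]$ in $K_0(L(E)_0)$ forces $P_0\cong Q_0$ as right $L(E)_0$-modules.

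Applying the inverse equivalence $-\otimes_{L(E)_0}L(E)$ then yields graded isomorphisms $P\cong P_0\otimes_{L(E)_0}L(E)\cong Q_0\otimes_{L(E)_0}L(E)\cong Q$, which finishes the proof. I do not expect a serious obstacle: once Dade's equivalence is in place the argument is essentially formal, and the single substantive ingredient is the cancellation property of $L(E)_0$, which is standard for ultramatricial (equivalently, locally matricial) algebras since their monoid of finitely generated projectives is cancellative.
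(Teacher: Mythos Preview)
Your proof is correct and follows essentially the same route as the paper: reduce via Dade's Theorem to $L(E)_0$, use that the ultramatricial algebra $L(E)_0$ has cancellation for finitely generated projectives, and pull the isomorphism back along the equivalence. The only cosmetic difference is that the paper invokes stable rank one for the cancellation step, while you argue directly that $V(L(E)_0)$ is cancellative as a direct limit of cancellative monoids; both justifications are standard and equivalent here.
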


\begin{proof}
 Observe that $L(E)$ is strongly graded, so that $\mbox{Gr}-L(E)$ is equivalent to $\mbox{Mod}-L(E)_0$ by Dade's Theorem.
 It follows that there are finitely generated projective right $L(E)_0$-modules $P_0$ and $Q_0$ such that $P\cong_{gr} P_0\otimes _{L(E)_0}L(E)$
 and $Q\cong_{gr} Q_0\otimes _{L(E)_0} L(E)$, and $[P_0]=[Q_0]$ in $K_0(L(E)_0)$. Since $L(E)_0$ is an ultramatricial algebra, it has stable rank one,
 and so $P_0\cong Q_0$. It follows that $P\cong_{gr} Q$.
 \end{proof}

\begin{theor}
 \label{thm:l=1}
 Let $E$ and $F$ be finite essential graphs. If $L(E)$ and $L(F)$ are strongly shift $K_0^{gr}$-isomorphic then $L(E)$ and
 $L(F)$ are graded isomorphic. Moreover, let $\Phi \colon K_0^{gr}(L(E))\to K_0^{gr}(L(F))$ be an isomorphism satisfying the conditions in Definition
 \ref{def:ssequeiv}. Then there is a graded isomorphism
 $\phi\colon L(E)\to L(F)$ such that $K_0^{gr}(\phi) = \Phi$.
\end{theor}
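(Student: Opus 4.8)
The plan is to combine Theorem \ref{thm:gr-Morita} with the cancellation result of Lemma \ref{lem:cancellationinK0gr} to upgrade the graded Morita equivalence to a genuine graded isomorphism. Concretely, let $\Phi \colon K_0^{gr}(L(E))\to K_0^{gr}(L(F))$ be an isomorphism satisfying the conditions in Definition \ref{def:ssequeiv}; so there is a strong shift equivalence $(S,R)$ from $A=A_E^t$ to $B=A_F^t$ and a non-negative integer $m$ such that $\Phi$ is the map induced by $((S,R),m)$ and $\Phi([1_{L(E)}])=[1_{L(F)}]$. By Theorem \ref{thm:gr-Morita} there is a graded Morita equivalence $\Psi \colon \mbox{Gr}-L(E)\longrightarrow \mbox{Gr}-L(F)$ with $K_0^{gr}(\Psi)=\Phi$. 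As recalled in Section \ref{sect:Equival} (via \cite[Theorem 5.4]{GreenGordon} and Examples \ref{exam:grMoritaequi}), such a $\Psi$ is given, up to natural isomorphism, by $\Psi \cong -\otimes _{L(E)} P$ for a graded $L(E)$--$L(F)$-bimodule $P$ which, as a right $L(F)$-module, is a finitely generated projective generator, and $\mbox{End}_{L(F)}(P)\cong L(E)$ as graded rings.

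First I would set $P:=\Psi(L(E)_{L(E)})$, a finitely generated graded projective right $L(F)$-module, and compute its class in $K_0^{gr}(L(F))$: by definition of $K_0^{gr}(\Psi)$ we have $[P]=K_0^{gr}(\Psi)([1_{L(E)}])=\Phi([1_{L(E)}])=[1_{L(F)}]=[L(F)_{L(F)}]$. Now invoke Lemma \ref{lem:cancellationinK0gr} (applicable since $F$ is essential, hence in particular has no sinks): from $[P]=[L(F)]$ in $K_0^{gr}(L(F))$ we conclude $P\cong_{gr} L(F)$ as graded right $L(F)$-modules. Following Examples \ref{exam:grMoritaequi}(2), we may then transport the $L(E)$-bimodule structure along this graded isomorphism to make $L(F)$ into a graded $L(E)$--$L(F)$-bimodule isomorphic to $P$, so that $\Psi \cong -\otimes _{L(E)} L(F)$; and the composite of graded ring isomorphisms
$$ L(E)\;\longrightarrow\; \mbox{End}_{L(E)}(L(E))\;\longrightarrow\; \mbox{End}_{L(F)}(L(E)\otimes _{L(E)} L(F))\;\longrightarrow\; \mbox{End}_{L(F)}(L(F))\;\longrightarrow\; L(F) $$
yields a graded ring isomorphism $\phi\colon L(E)\to L(F)$ whose associated Morita equivalence is precisely $-\otimes _{L(E)} L(F)\cong \Psi$. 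Hence $K_0^{gr}(\phi)=K_0^{gr}(\Psi)=\Phi$, which gives both conclusions of the theorem at once; the first assertion (existence of \emph{some} graded isomorphism) is the special case obtained by forgetting $\Phi$.

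The step I expect to require the most care is verifying that $P=\Psi(L(E))$ really is a finitely generated projective \emph{generator} as a right $L(F)$-module and that $\mbox{End}_{L(F)}(P)\cong L(E)$ as \emph{graded} rings, i.e. that the graded Morita theory machinery of \cite{GreenGordon} and Examples \ref{exam:grMoritaequi} applies cleanly to $\Psi$; once that is in place, the passage from $P\cong_{gr}L(F)$ to the isomorphism $\phi$ is formal, and the identity $K_0^{gr}(\phi)=\Phi$ follows from the last paragraph of Section \ref{sect:Equival} and Examples \ref{exam:grMoritaequi}(2), which record exactly that the Morita equivalence induced by a graded ring isomorphism recovers its $K_0^{gr}$-map. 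The only other point to check is that the cancellation lemma is being applied to right modules over $L(F)$, matching the convention under which $P$ naturally lives as a right module; this is the reason $F$ (rather than $E$) is required to have no sinks in this step, and it is satisfied since $F$ is essential by hypothesis.
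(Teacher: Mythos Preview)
Your proposal is correct and follows essentially the same route as the paper: apply Theorem \ref{thm:gr-Morita} to obtain a graded Morita equivalence $\Psi$ with $K_0^{gr}(\Psi)=\Phi$, set $P=\Psi(L(E))$, use $\Phi([1_{L(E)}])=[1_{L(F)}]$ to get $[P]=[L(F)]$, invoke Lemma \ref{lem:cancellationinK0gr} to conclude $P\cong_{gr}L(F)$, and then cite Example \ref{exam:grMoritaequi}(2) to upgrade to a graded ring isomorphism $\phi$ with $K_0^{gr}(\phi)=K_0^{gr}(\Psi)=\Phi$. The extra care you flag about $P$ being a graded progenerator is already packaged into the graded Morita theory invoked in Example \ref{exam:grMoritaequi}(2) (via \cite{HazMon}), so no additional argument is needed there.
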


\begin{proof}
By definition there is a strong shift equivalence $(S,R)$ from $A:=A_E^t$ to $B:=A_F^t$ and a non-negative integer $m$ such
that the induced map $K_0^{gr}(L(E))\to K_0^{gr}(L(F))$ is $\Phi$.

By Theorem \ref{thm:gr-Morita}, there is a graded Morita equivalence
$$\Psi\colon \mbox{Gr}-L(E) \longrightarrow \mbox{Gr}-L(F)$$
such that $K_0^{gr}(\Psi )=\Phi$.
We have
$$[1_{L(F)}] =\Phi ([1_{L(E)}])=K_0^{gr}(\Psi)([1_{L(E)}]) .$$
Set $P:= \Psi (L(E)_{L(E)})$. Then we get from the above that $[P]= [L(F)_{L(F)}]$ in $K_0^{gr}(L(F))$, so that, by Lemma \ref{lem:cancellationinK0gr},
we get that $P\cong_{gr}L(F)_{L(F)}$. By Example \ref{exam:grMoritaequi}(2), there is a graded isomorphism $\phi  \colon L(E)\to L(F)$ such that
$K_0^{gr}(\phi)= K_0^{gr}(\Psi)$. Since $K_0^{gr}(\Psi ) = \Phi$ we obtain that $K_0^{gr}(\phi) = \Phi$, as desired.
\end{proof}

\begin{exem}
 \label{exam:again-aureus}
{\rm  Let $E$ and $F$ be the graphs considered in Example \ref{exam:Moritamatrices}(2). As stated there the map induced by $((S,R),0)$ gives a strong shift equivalence
 $\Phi$ from the dimension triple of $A$ to the dimension triple of $B$ preserving the canonical order-units. By Theorem \ref{thm:l=1}, there is a graded isomorphism
 $\phi \colon L(E)\to L(F)$ such that $K_0^{gr}(\phi)=\Phi$. Observe that, following the proof of this theorem, we need to compose the graded Morita equivalence
 given in the proof of \cite[Proposition 1.11]{AbAnhLouP2} with the inverse of the graded Morita equivalence induced by the corner isomorphism
 $\beta \colon L(F)\to pL(F)p$ corresponding to $L(F)$, in order to get a graded Morita equivalence taking $[1_{L(E)}]$ to $[1_{L(F)}]$.}
 \end{exem}

\begin{qtn}
  {\rm Let $E$ and $F$ be finite essential graphs. Assume there is an isomorphism
  $$(K_0^{gr}(L(E)), K_0^{gr}(L(E))^+, [1_{L(E)}])\cong (K_0^{gr}(L(F)), K_0^{gr}(L(F))^+, [1_{L(F)}]).$$
  Write $A:=A_E^t$ and $B=A_F^t$. Assume in addition that $A$ and $B$ are strongly shift equivalent.
  Is then $L(E)$ graded isomorphic to $L(F)$? A positive answer would follow from Theorem \ref{thm:l=1}
  if one knows that every automorphism of the dimension triple $(\Delta_B, \Delta_B^+, \delta_B)$ is induced by
  a strong shift equivalence from $B$ to $B$.}
\end{qtn}

\section{Towards a characterization of graded isomorphisms}\label{MainSection}

The main goal of this section is to establish a strategy to prove a weak version of \cite[Conjecture 1]{Hazrat}
in the case of finite essential graphs.

In order to state our main result, we need some definitions.  Recall that an automorphism $g $ of an algebra $\mathcal A$ is said to be {\it locally inner}
in case, given any finite number of elements $x_1, \dots , x_n$ in $\mathcal A$ there is an inner automorphism $h$ of $\mathcal A$
such that $h(x_i)= \varphi (x_i)$ for $i=1, \dots , n$. It is well-known (\cite[Lemma 15.23(b)]{vnrr}) that any automorphism $\varphi $ of an ultramatricial algebra $\mathcal A$,
such that $K_0(\varphi ) = \text{id}$, is locally inner.

Let $E$ be a finite essential graph, and write $L(E)\cong L(E)_0[t_+,t_-,\alpha]$ as in Section \ref{sect:Equival}.
Let $g$ be a locally inner automorphism of $L(E)_0$. Then we define $L^g(E)$ by
$$L^g(E)= L(E)_0[t_+,t_-, g\circ \alpha ].$$
Observe that $L^g(E)$ is a graded algebra with the same graded $K$-theory as $L(E)$.

\begin{theor}\label{theor:Kiffgr-iso unital}
If $E,F$ are finite essential graphs, then the following are equivalent:
\begin{enumerate}
\item $(K_0(L(E)_0), K_0^+(L(E)_0), [1_{L(E)_0}])\cong_{\Z[x,x^{-1}]} (K_0(L(F)_0), K_0^+(L(F)_0), [1_{L(F)_0}])$.
\item  There exists a locally inner automorphism $g$ of $L(E)_0$ such that $L^g(E)\cong_{gr}L(F)$.
\end{enumerate}
\end{theor}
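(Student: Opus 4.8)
The plan is to prove $(2)\Rightarrow(1)$ as an easy observation and to devote the real work to $(1)\Rightarrow(2)$. For $(2)\Rightarrow(1)$, note that $L^g(E)$ is graded isomorphic to an algebra with degree-zero part $L(E)_0$, and since $g$ is locally inner, $K_0(g)=\mathrm{id}$, so the translation functor on $L^g(E)$ induces $K_0(g\circ\alpha)_*=K_0(g)_*\circ\alpha_*=\alpha_*$ on $K_0(L(E)_0)$; hence $L^g(E)$ and $L(E)$ have the same ordered $\Z[x,x^{-1}]$-module $K_0^{gr}$ with the same order-unit (the class of $1_{L(E)_0}=1_{L^g(E)_0}$). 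So a graded isomorphism $L^g(E)\cong_{gr} L(F)$ yields the isomorphism of triples in (1) via Proposition \ref{Corollary: TheGoodOne} (order-unit version) and Theorem \ref{Theorem: MainFirstSection}, using that the order-unit in $K_0^{gr}$ corresponds to $[1_{L(E)_0}]$ in $K_0(L(E)_0)$.

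For $(1)\Rightarrow(2)$, the first step is to translate the hypothesis, via Theorem \ref{Theorem: MainFirstSection} and the discussion of dimension triples, into a statement about matrices: $(1)$ is equivalent to a shift equivalence $A\sim_{SE}B$ (with $A=A_E^t$, $B=A_F^t$) inducing an isomorphism $\Phi$ of dimension triples that moreover sends $[1_{L(E)_0}]$ to $[1_{L(F)_0}]$. By Williams' theorem \cite{W}, $A\sim_{SE}B$ does \emph{not} immediately give strong shift equivalence, so one cannot directly invoke Theorem \ref{thm:l=1}; this is the crux. The strategy will be to pass to the \emph{presentation} of $\Phi$ as coming from a strong shift equivalence \emph{after} absorbing the discrepancy into the corner automorphism. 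Concretely, a shift equivalence of lag $\ell$ always becomes a strong shift equivalence up to multiplying by a power of the automorphism $\delta_A$ (equivalently, using Krieger's/Williams' decomposition results one reduces to the lag-one case at the cost of replacing $A$ by $A^\ell$, i.e. passing to a graph Morita equivalent to $E$). Thus after replacing $E$ by a graded-Morita-equivalent essential graph $E'$ — which changes $L(E)$ only up to graded Morita equivalence and does not affect the isomorphism class of $L(E)_0$ — we reduce to the situation where the isomorphism $\Phi$ of dimension triples \emph{is} induced by a genuine strong shift equivalence, but now with a possibly wrong order-unit: $\Phi([1_{L(E')_0}])$ and $[1_{L(F)_0}]$ need not coincide, they only agree up to the action of the automorphism $\delta_B$ (i.e. up to a power of $x$ in $K_0^{gr}$).

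The heart of the argument is then to correct this order-unit discrepancy by deforming $\alpha$. Given a strong shift equivalence $(S,R)$ from $A$ to $B$ with $\Phi_{(S,m)}$ preserving order-units — which exists as soon as we allow ourselves to adjust the integer $m$, because changing $m$ shifts the image of the order-unit by $\delta_B^{\pm 1}$, and by hypothesis (1) the two order-units already match up to the module structure, hence up to some power of $\delta_B$ — Theorem \ref{thm:l=1} produces a graded isomorphism $L(E')\cong_{gr}L(F)$. Finally one has to reconcile this with the statement as phrased: we want $L^g(E)$, not $L(E')$, on the left. For this we use that $L(E')$ is graded Morita equivalent to $L(E)$ via a Morita equivalence whose $K_0^{gr}$-map is $\delta_A^{k}$ for the relevant $k$ (Theorem \ref{thm:gr-Morita}, Example \ref{exam:Moritamatrices}(1)); composing, we obtain a graded Morita equivalence $L(E)\to L(F)$ sending $[1_{L(E)_0}]$ to $[1_{L(F)_0}]$, and then invoke Lemma \ref{lem:cancellationinK0gr} and Example \ref{exam:grMoritaequi}(2) exactly as in the proof of Theorem \ref{thm:l=1} to upgrade it to a graded isomorphism, provided the image of $L(E)_{L(E)}$ is graded-isomorphic to a \emph{free} module of rank one — which is where the locally inner automorphism $g$ enters: the obstruction to the Morita bimodule being isomorphic to $L(E)_0$ itself (rather than to $L(E)_0$ with a twisted bimodule structure) is an automorphism of $L(E)_0$ inducing the identity on $K_0$, hence locally inner by \cite[Lemma 15.23(b)]{vnrr}, and twisting $\alpha$ by its inverse produces exactly $L^g(E)$ with $L^g(E)\cong_{gr}L(F)$.

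**Main obstacle.** The delicate point is the reduction from a bare shift equivalence (which is all hypothesis (1) gives, by Theorem \ref{Theorem: MainFirstSection}) to a strong shift equivalence, since these are genuinely different in general; I expect the bulk of the real work — presumably deferred to Section \ref{ProofMainTheorem} — to consist of carefully tracking the order-unit and the bimodule structure through Williams' decomposition and the ultramatricial cancellation arguments, and isolating the residual ambiguity precisely as a locally inner automorphism $g$ so that $L^g(E)$, and not merely something graded Morita equivalent to it, appears on the left-hand side.
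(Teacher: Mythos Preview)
Your treatment of $(2)\Rightarrow(1)$ is fine and matches the paper.

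For $(1)\Rightarrow(2)$ your plan has a genuine gap. You correctly note that hypothesis~(1), via Theorem~\ref{Theorem: MainFirstSection}, yields only a shift equivalence $A\sim_{SE}B$ of some lag $l$, and that shift equivalence is strictly weaker than strong shift equivalence. But both of the tools you want to invoke---Theorem~\ref{thm:gr-Morita} (for the graded Morita equivalence) and Theorem~\ref{thm:l=1} (for the graded isomorphism)---require a \emph{strong} shift equivalence as input. Your proposed reductions do not bridge this gap: replacing $A$ by $A^{\ell}$ does give an elementary equivalence $A^{\ell}=RS$, $B^{\ell}=SR$, but the graph with incidence matrix $(A^{\ell})^t$ corresponds to the $\ell$th Veronese of $L(E)$, which carries a different $\Z$-grading and is not graded Morita equivalent to $L(E)$ in the required sense; and ``passing to a graded-Morita-equivalent essential graph $E'$'' via in/out-splittings only produces further strong shift equivalences within the strong-shift-equivalence class of $A$, which need not contain $B$. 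In short, you are assuming one can always manufacture a strong shift equivalence from a shift equivalence by such manipulations, and that is precisely what fails in general.

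The paper's route is entirely different and never reduces to strong shift equivalence. Starting from the shift equivalence $(S,R)$ of lag $l$, it writes down a $K_0$-level commutative ``ladder'' diagram $(\dag)$ built from $S$, $R$, $A^l$, $B^l$, and the identity maps encoding $\alpha_*$, $\beta_*$. The substance of Section~\ref{ProofMainTheorem} is to lift $(\dag)$ to a commutative diagram $(\diamondsuit)$ of matricial algebras $\mathcal M(E)_{kl+\varepsilon}$, $\mathcal M(F)_{m+kl+\varepsilon}$ by explicit combinatorics: one chooses coherent partitions of the path sets $E^{kl+\varepsilon}v_i$ and $F^{m+kl+\varepsilon}w_j$, indexed by one another in a way encoding the four shift-equivalence relations, and uses them to define unital homomorphisms $\phi_{kl+\varepsilon}$, $\psi_{m+kl+\varepsilon}$ lifting $S$ and $R$. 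No Williams decomposition or bimodule bookkeeping appears. The output is a pair of algebra isomorphisms $\varphi_0,\varphi_1\colon \mathcal M(E)\to\mathcal M(F)$ with $\beta\varphi_0=\varphi_1\alpha$. One then sets $g:=\varphi_0^{-1}\varphi_1$, checks directly from the diagram that $K_0(g)=\mathrm{id}$ (Lemma~\ref{lem:locallyinner}), hence $g$ is locally inner by \cite[Lemma 15.23(b)]{vnrr}; the identity $\beta\varphi_0=\varphi_0(g\alpha)$ then lets the universal property of the fractional skew monoid ring extend $\varphi_0$ to a graded isomorphism $L^g(E)=\mathcal M(E)[t_+,t_-;g\alpha]\to\mathcal M(F)[s_+,s_-;\beta]=L(F)$.
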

\begin{proof}
$(2)\Rightarrow (1)$. If $L^g(E)\cong_{gr}L(F)$, there is an
order-unit preserving  ordered  $\Z[x,x^{-1}]$-module isomorphism
$K_0^{gr}(L^g(E))\cong K_0^{gr}(L(F))$. So the result follows from
Theorem \ref{Theorem: MainFirstSection} and the fact that $K_0^{gr}(L^g(E))= K_0^{gr}(L(E))$.

$(1)\Rightarrow (2)$ will be proven in Section
\ref{ProofMainTheorem}.
\end{proof}

The proof of $(1)\Rightarrow (2)$ is more involved, and we will postpone it until next section. We will explain in the present section
the strategy we will follow to prove it, and we will give the final argument of the proof, assuming the results obtained
in the next section.\vspace{.2truecm}

We do not know whether the existence of a graded isomorphism between $L^g(E)$ and $L(F)$ implies the existence of a graded isomorphism
between $L(E)$ and $L(F)$.

\begin{exem}
{\rm A particular case to apply this argument is the problem of isomorphism
 between
$L_2$ and $L_{2_-}$ (see e.g. \cite[Section 2]{AbAnhLouP2}). If $L_2$ is graded 
isomorphic to $L_{2_-}$,
then by Theorem \ref{theor:Kiffgr-iso unital} $(2)\Rightarrow (1)$ and
Theorem \ref{Theorem: MainFirstSection} the corresponding matrices $A$ and $B$ are shift equivalent, and so
flow equivalent. By \cite{Franks} we then have $\mbox{det}(I-A_2)=\mbox{det}(I-A_{2_-})$, but
$\mbox{det}(I-A_2)=1$ and $\mbox{det}(I-A_{2_-})=-1$  (by direct calculation).
Thus, $L_2$ and $L_{2_-}$ are not graded 
isomorphic.
It is still open to decide the existence of a (nongraded) isomorphism.}
\end{exem}

We now begin to explain our strategy to show Theorem \ref{theor:Kiffgr-iso unital}.

\begin{noname}\label{startingdata}
{\rm \textbf{\textsc{Starting Data}:} Let $E,F$ be finite essential graphs,
such that $N=\vert E^0\vert ,
M=\vert F^0\vert$. We fix pictures $L(E)=L(E)_0[t_+, t_{-}; \alpha]$
and $L(F)=L(F)_0[s_+, s_{-}; \beta]$. Also, we define $A:=A_E^t\in
M_N(\Z^+)$ and $B:=A_F^t\in M_{M}(\Z^+)$, and fix $\delta_A:
K_0(L(E)_0)\rightarrow K_0(L(E)_0)$ and $\delta_B:
K_0(L(F)_0)\rightarrow K_0(L(F)_0)$ to be the group isomorphisms given
by multiplication by $A$ (respectively $B$).  If we write
$\mathcal M (E):=L(E)_0=\varinjlim (\mathcal M (E)_n, j_{n,m}^E)$, where $\mathcal M (E)_n = L(E)_{0,n}$, and
similarly $\mathcal M (F) : = L(F)_0=\varinjlim
(\mathcal M (F)_n, j_{n,m}^F)$ (whose connecting maps are induced by the
(CK2) relation on $E$ and $F$ respectively), then we denote $G_m:=
K_0(\mathcal M(E)_m)$, $H_m:= K_0(\mathcal M (F)_m)$, $G:=K_0(\mathcal M (E))$ and
$H:=K_0(\mathcal M (F))$. Notice that for every $m\geq 0$ we have $G_m\cong
\Z^N$ and $H_m\cong \Z^{M}$, as well as that $G=\varinjlim (G_m, A)$
and $H=\varinjlim (H_m, B)$.}
\end{noname}

In order to attain our goal, we start by establishing some previous
facts. The proof of the next two lemmas is trivial.

\begin{lem}\label{Lemma:CommCubeS}
If there exists a nonnegative integer matrix $S\in M_{M,N}(\Z^+)$ such that $SA=BS$, then for every $l,k, m\geq 1$ there exists a commutative diagram
$$\xymatrix@R=.25pc @C=.25pc{
%
& & & G_{l+1} \ar[rrrr]^{A^k}  \ar'[dd][ddddd]_S& & & & G_{k+l+1} \ar[ddddd]_S\\
& & & & & & & \\
G_{l} \ar[uurrr]_{id_{\Z^N}}\ar[rrrr]^{A^k} \ar[ddddd]_S& & & &G_{k+l} \ar[uurrr]_{id_{\Z^N}}\ar[ddddd]_S\\
& & & & & & &\\
& & & & & & &\\
& & & H_{m+1}  \ar'[r][rrrr]^{B^k}  & & & &H_{ m+k+1} \\
& & & & & & &\\
H_{m} \ar[uurrr]_{id_{\Z^{M}}}\ar[rrrr]_{B^k}   & & & & H_{m+k}\ar[uurrr]_{id_{\Z^{M}}}
}
$$
\end{lem}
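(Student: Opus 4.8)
The statement to prove is Lemma \ref{Lemma:CommCubeS}, which asserts that if $S \in M_{M,N}(\Z^+)$ satisfies $SA = BS$, then a certain cube-shaped diagram commutes, with $G$-levels on top, $H$-levels on the bottom, horizontal maps given by powers $A^k$ (resp.\ $B^k$), the slanted maps being the identity matrices $\mathrm{id}_{\Z^N}$ (resp.\ $\mathrm{id}_{\Z^M}$), and all four vertical maps given by $S$.

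\textbf{Plan of proof.} The proof is purely a matter of checking that each of the six faces of the cube commutes, and since all maps are given by integer matrices acting on $\Z^N$ or $\Z^M$ (or rectangular maps between them), each face reduces to a single matrix identity. First I would observe that the two ``horizontal square'' faces (front and back) commute trivially: in the top-front square the maps $G_l \to G_{l+1}$ and $G_{k+l}\to G_{k+l+1}$ are both $\mathrm{id}_{\Z^N}$ (these are the slanted arrows labelled $\mathrm{id}_{\Z^N}$ redrawn as the sides of a square), and the two horizontal maps are both $A^k$; since $A^k \cdot \mathrm{id} = \mathrm{id}\cdot A^k$ the square commutes. The analogous statement holds on the bottom with $B^k$ and $\mathrm{id}_{\Z^M}$. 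Next, the two ``vertical square'' faces involving only $G$'s and $H$'s: the left face has $G_l \xrightarrow{A^k} G_{k+l}$ on top, $H_m \xrightarrow{B^k} H_{m+k}$ on the bottom, and $S$ on both sides, so commutativity is exactly $S A^k = B^k S$, which follows by an immediate induction from the hypothesis $SA = BS$. The right face is the identical identity $SA^k = B^k S$ with shifted indices. Finally the top and bottom faces: the top face is a square with vertices $G_l, G_{l+1}, G_{k+l}, G_{k+l+1}$, horizontal maps $A^k$, and slanted maps $\mathrm{id}_{\Z^N}$, commuting because $A^k = A^k$; the bottom face is the same with $B^k$ and $\mathrm{id}_{\Z^M}$.

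\textbf{Main obstacle.} There is essentially no obstacle: as the paper itself says, ``the proof of the next two lemmas is trivial,'' and indeed every face of the cube reduces either to the tautology $A^k = A^k$ (resp.\ $B^k = B^k$) or to the relation $SA^k = B^k S$. The only mildly substantive point is the induction establishing $SA^k = B^k S$ for all $k \ge 1$ from $SA = BS$: the base case $k=1$ is the hypothesis, and for the inductive step $SA^{k+1} = (SA^k)A = (B^k S)A = B^k(SA) = B^k(BS) = B^{k+1}S$. Assembling these face-commutativities gives the commutativity of the whole cube, which is the assertion of the lemma.

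\begin{proof}
Every map appearing in the diagram is the map induced on $K_0$ of a matricial algebra by a nonnegative integer matrix, so each $G_j$ is identified with $\Z^N$, each $H_j$ with $\Z^M$, and commutativity of any subdiagram is equivalent to an identity of integer matrices.

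First, an easy induction on $k$ shows that $SA^k = B^k S$ for all $k\ge 1$: the case $k=1$ is the hypothesis $SA=BS$, and assuming $SA^k=B^kS$ we get $SA^{k+1}=(SA^k)A=(B^kS)A=B^k(SA)=B^k(BS)=B^{k+1}S$.

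Now we check that the six faces of the cube commute. The two faces lying ``above'' and ``below'' (with vertices $G_l,G_{l+1},G_{k+l},G_{k+l+1}$, respectively $H_m,H_{m+1},H_{m+k},H_{m+k+1}$) have horizontal maps $A^k$ (resp.\ $B^k$) and slanted maps $\mathrm{id}_{\Z^N}$ (resp.\ $\mathrm{id}_{\Z^M}$), so commutativity amounts to $A^k\cdot \mathrm{id}=\mathrm{id}\cdot A^k$ (resp.\ the same for $B^k$), which is clear. The two ``front'' and ``back'' faces each consist of two horizontal maps equal to $A^k$ on the $G$-side and two slanted maps equal to $\mathrm{id}_{\Z^N}$; these commute for the same reason, and similarly on the $H$-side with $B^k$ and $\mathrm{id}_{\Z^M}$. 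Finally, the two remaining faces (the ``left'' and ``right'' faces, each joining a $G$-edge to the corresponding $H$-edge by the vertical map $S$) have the form
$$\xymatrix{G_i \ar[r]^{A^k}\ar[d]_S & G_{k+i}\ar[d]^S \\ H_j \ar[r]_{B^k} & H_{k+j}}$$
for the appropriate indices $i,j$, and commutativity of each is precisely the identity $SA^k=B^kS$ established above.

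Since all six faces commute, the whole cube commutes, as required.
\end{proof}
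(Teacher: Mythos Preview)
Your proposal is correct and matches the paper's own treatment: the paper simply declares the proof ``trivial'' and omits it entirely, and your verification that every face reduces to either a tautology involving identity maps or the relation $SA^k=B^kS$ (obtained by induction from $SA=BS$) is exactly the expected argument. One small cosmetic point: your labeling of the faces is slightly tangled---what you call the ``front and back'' faces duplicates your description of the top and bottom, while the two side faces (with maps $\mathrm{id}$ and $S$, commuting via $S\cdot\mathrm{id}=\mathrm{id}\cdot S$) are not named explicitly---but since every relevant matrix identity is either verified or entirely tautological, this does not affect correctness.
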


\begin{lem}\label{Lemma:CommCubeR}
If there exists a nonnegative integer matrix $R\in M_{N,M}(\Z^+)$ such that $AR=RB$, then for every $l,k, m\geq 1$ there exists a commutative diagram
$$\xymatrix@R=.25pc @C=.25pc{
%
& & & G_{l+1} \ar[rrrr]^{A^k}  & & & & G_{k+l+1} \\
& & & & & & & \\
G_{l} \ar[uurrr]_{id_{\Z^N}}\ar[rrrr]^{A^k} & & & &G_{k+l} \ar[uurrr]_{id_{\Z^N}}\\
& & & & & & &\\
& & & & & & &\\
& & & H_{m+1} \ar'[uuu][uuuuu]_R \ar'[r][rrrr]^{B^k}  & & & &H_{ m+k+1} \ar[uuuuu]_R \\
& & & & & & &\\
H_{m} \ar[uurrr]_{id_{\Z^{M}}}\ar[uuuuu]_R \ar[rrrr]_{B^k}   & & & & H_{m+k}\ar[uurrr]_{id_{\Z^{M}}}\ar[uuuuu]_R
}
$$
$\mbox{ }$ \hspace{\fill}$\Box$
\end{lem}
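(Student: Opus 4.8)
The plan is simply to verify that each of the six faces of the displayed cube commutes, all of which amounts to an elementary identity of integer matrices. Recall that under the identifications $G_j\cong\Z^N$ and $H_j\cong\Z^M$, the arrows labelled $id$ are the maps realizing the corner isomorphisms $\alpha$ (resp. $\beta$) at each finite stage of the direct limits $G=\varinjlim(G_n,A)$ and $H=\varinjlim(H_n,B)$, while the arrows labelled $A^k$ (resp. $B^k$) are the corresponding $k$-fold connecting maps, i.e. left multiplication by $A^k$ (resp. $B^k$). Thus the whole diagram lives in the category of finitely generated free abelian groups, with every arrow given by left multiplication by a nonnegative integer matrix, and ``commutativity'' of a face is literally an equality of matrix products.

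First I would dispose of the five easy faces. The top face asks that $A^k\cdot I_N=I_N\cdot A^k$, the bottom face that $B^k\cdot I_M=I_M\cdot B^k$, and the two side faces containing the diagonal $id$-arrows ask that $R\cdot I_M=I_N\cdot R$; all of these hold trivially, since the identity matrix is a two-sided unit. The only face carrying content is the one asserting $A^kR=RB^k$ as maps $\Z^M\to\Z^N$; it occurs as the two remaining (``front'' and ``back'') faces of the cube, namely $\{H_m,H_{m+k},G_{k+l},G_l\}$ and $\{H_{m+1},H_{m+k+1},G_{k+l+1},G_{l+1}\}$.

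For that identity I would argue by induction on $k$, the case $k=1$ being exactly the hypothesis $AR=RB$. Assuming $A^{k-1}R=RB^{k-1}$, one computes
$$A^kR=A\,(A^{k-1}R)=A\,(RB^{k-1})=(AR)\,B^{k-1}=(RB)\,B^{k-1}=RB^k.$$
This completes the verification, so the cube commutes for all $l,k,m\ge 1$. I do not expect any obstacle: once the maps are correctly identified, the statement reduces to associativity of matrix multiplication together with the one-line induction above. (The companion Lemma \ref{Lemma:CommCubeS} is handled the same way, the content being $SA^k=B^kS$, which follows from $SA=BS$ by the symmetric induction.)
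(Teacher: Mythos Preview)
Your proof is correct and matches the paper's approach: the paper declares the lemma trivial and omits any argument, and your face-by-face verification (reducing everything to $A^kR=RB^k$, proved by induction from the hypothesis $AR=RB$) is exactly the routine check the authors had in mind.
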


Given $t\geq 0$, we will denote $\Omega_t:={id_{\Z^N}}: G_{t}\rightarrow G_{t+1}$ and $\Omega'_t:={id_{\Z^{M}}}: H_{t}\rightarrow H_{t+1}$. If we have
$$(K_0(\mathcal M (E)), K_0^+(\mathcal M (E)), [1_{\mathcal M (E)}])\cong_{\Z[x,x^{-1}]} (K_0(\mathcal M (F)), K_0^+(\mathcal M (F)), [1_{\mathcal M (F)}]),$$
then by Theorem \ref{Theorem: MainFirstSection}
$$(K_0(\mathcal M (E)), K_0^+(\mathcal M (E)), [1_{\mathcal M (E)}], \delta_A)\cong (K_0(\mathcal M (F)), K_0^+(\mathcal M (F)), [1_{\mathcal M(F)}], \delta_B)$$
through an ordered group isomorphism $\Phi : K_0(\mathcal M (E))\rightarrow
K_0(\mathcal M (F))$ such that $\Phi ([1_{\mathcal M (E)}])=[1_{\mathcal M (F)}]$.
Equivalently, by Theorem \ref{Theorem: MainFirstSection},
$A\sim_{SE} B$ with lag $l$ for some $l \geq 1$. Hence, there exists
a pair of nonnegative integral matrices $R\in M_{N\times M}(\Z^+)$
and $S\in M_{M\times N}(\Z^+)$ satisfying the four shift equivalence
equations:
\begin{enumerate}
\item (i) $AR=RB$; (ii) $SA=BS$.
\item (i) $A^{l}=RS$; (ii) $B^{l}=SR$.
\end{enumerate}

Notice that, when $l=1$, we have strong shift equivalence of $A_E$ and $A_F$, therefore $L(E)$ and $L(F)$ are strongly shift $K_0^{gr}$-isomorphic, and so by
Theorem \ref{thm:l=1}, we have that $L(E)\cong _{gr} L(F)$.

Consequently, we shall assume in the sequel that $l>1$.  Indeed, $(R, SA^{t} )$ implements
a shift equivalence between $A$ and $B$ of lag $l+t$ for any $t\ge 1$ (see \cite[[Proposition 7.3.2(2)]{L-M}).

\begin{rema}
\label{rem:R-Sexplicitisomorphisms}
{\rm By the proof of \cite[Theorems 7.5.7 \& 7.5.8]{L-M}, the maps $S$ and $R$ define explicitly the ordered group isomorphisms $\Phi$ and $\Phi^{-1}$ respectively.
In particular, $S$ and $R$ preserve the order unit (whenever $\Phi $ does).}
\end{rema}

Now, because of Theorem \ref{Theorem: MainFirstSection} we have
$$(K_0(\mathcal M (E)), K_0^+(\mathcal M(E)),  [1_{\mathcal M (E)}], \alpha_{\ast})\cong (K_0(\mathcal M (F)), K_0^+(\mathcal M (F)), [1_{\mathcal M (F)}], \beta_{\ast})$$
for the respective corner isomorphisms $\alpha$ and $\beta$, through the same isomorphism of ordered groups $\Phi$. Thus, by
Lemmas \ref{Lemma:CommCubeS} and \ref{Lemma:CommCubeR} we have, for
a suitable $m\geq 1$, the following commutative diagram (tagged
$(\dag)$)
$$\xymatrix @R=.75pc @C=.5pc{
& & & G_{1} \ar[rrrr]^{A^l}  \ar'[dd]'[dddd][ddddd]_(-.75)S& & & & G_{l+1}\ar[rrrr]^{A^l} \ar'[dd]'[dddd][ddddd]_(-.75)S& & & & G_{2l+1} \ar'[dd][ddddd]_(.35)S
\ar @{.>}[rrrr] & & & & G \ar[ddddd]_{\Phi}\\
& & & & & & & & & & & & & & &\\
G_{0} \ar[uurrr]^{\Omega_0}\ar[rrrr]^{A^l} \ar[ddddd]_(.4)S& & & &G_{l} \ar[uurrr]^{\Omega_l}\ar[rrrr]^{A^l}\ar[ddddd]_(.4)S& & & & G_{2l}
\ar[uurrr]^{\Omega_{2l}}\ar[ddddd]_(.4)S \ar @{.>}[rrrr] & & & & G \ar[uurrr]_{\alpha _*} \ar[ddddd]_{\Phi}\\
& & & & & & & & & & & & & & &\\
& & & & & & & & & & & & & & &\\
& & & H_{m+1} \ar'[uur]'[uuurr][uuuuurrrr]_(-.2)R \ar'[r][rrrr]_(.1){B^l}  & & & &H_{ m+l+1}\ar'[uur]'[uuurr][uuuuurrrr]_(-.2)R \ar'[r][rrrr]_(.1){B^l}
& & & &  H_{m+2l+1} \ar @{.>}'[r][rrrr] & & & & H\\
& & & & & & & & & & & & & & &\\
H_{m}\ar[uuuuurrrr]^R \ar[uurrr]_{\Omega'_m}\ar[rrrr]_{B^l}    & & & & H_{m+l} \ar[uuuuurrrr]^R \ar[uurrr]_{\Omega'_{m+l}}\ar[rrrr]_{B^l}
& & & & H_{m+2l}\ar[uurrr]_{\Omega'_{m+2l}} \ar @{.>}[rrrr] & & & & H \ar[uurrr]_{\beta _*}
}
$$
All the group morphisms appearing in this diagram are order preserving,
and, by Remark \ref{rem:R-Sexplicitisomorphisms}, those defined by matrices $R,S,A^l, B^l$ also preserve
order-unit, while the maps $\Omega_r, \Omega'_t$ are the restriction
of $\alpha_{\ast}$ (respectively $\beta_{\ast}$) to the
corresponding $G_r$ (respectively $H_t$) by the argument in the
proof of Lemma \ref{Lemma: ThirdEquiv}.

Our aim is to lift, in an inductive way, this diagram to a commutative diagram of algebras (tagged $(\diamondsuit)$)
$$\xymatrix @R=.3pc @C=.02pc{
& & & \mathcal M(E)_1 \ar[rrrr]^{j_{1,l+1}^E}  \ar'[dd]'[dddd][ddddd]_(-.75){\phi_{1}}& & & & \mathcal M (E)_{l+1}\ar[rrrr]^{j_{l+1, 2l+1}^E} \ar'[dd]'[dddd][ddddd]_(-.75){\phi_{l+1}}& & & &
\mathcal M (E)_{2l+1} \ar'[dd][ddddd]_(.35){\phi_{2l+1}} \ar @{.>}[rrrr] & & & & \mathcal M(E) \ar @/_/[ddddd]_{\varphi_1}\\
& & & & & & & & & & & & & & &\\
\mathcal M (E)_{0} \ar[uurrr]^{\alpha_{\vert \mathcal M (E)_{0}}}\ar[rrrr]^{j_{0,l}^E} \ar[ddddd]_(.4){\phi_{0}}& & & &\mathcal M (E)_{l} \ar[uurrr]^{\alpha_{\vert \mathcal M(E)_{l}}}\ar[rrrr]^{j_{l, 2l}^E}\ar[ddddd]_(.4){\phi_{l}}& & & &
\mathcal M (E)_{2l} \ar[uurrr]^{\alpha_{\vert \mathcal M (E)_{2l}}}\ar[ddddd]_(.4){\phi_{2l}} \ar @{.>}[rrrr] & & & & \mathcal M (E) \ar[uurrr]_{\alpha} \ar @/^/[ddddd]^{\varphi_0}\\
& & & & & & & & & & & & & & &\\
& & & & & & & & & & & & & & &\\
& & & \mathcal M (F)_{m+1} \ar'[uur]'[uuurr][uuuuurrrr]_(-.2){\psi_{m+1}} \ar'[r][rrrr]_(.1){j_{m+1, m+l+1}^F}  & & & &\mathcal M (F)_{m+l+1}
\ar'[uur]'[uuurr][uuuuurrrr]_(-.2){\psi_{m+l+1}} \ar'[r][rrrr]_(.1){j_{m+l+1, m+2l+1}^F}   & & & &  \mathcal M (F)_{m+2l+1} \ar @{.>}'[r][rrrr] & & & & \mathcal M (F) \ar @/_/[uuuuu]_{\psi_1}\\
& & & & & & & & & & & & & & &\\
\mathcal M (F)_{m}\ar[uuuuurrrr]^{\psi_{m}} \ar[uurrr]_(.8){\beta_{\vert \mathcal M (F)_{m}}}\ar[rrrr]_{j_{m, m+l}^F}    & & & &
\mathcal M (F)_{m+l} \ar[uuuuurrrr]^{\psi_{m+l}} \ar[uurrr]_(.8){\beta_{\vert \mathcal M (F)_{m+l}}}\ar[rrrr]_{j_{m+l, m+2l}^F}   & & & & \mathcal M (F)_{m+2l}
\ar[uurrr]_(.8){\beta_{\vert \mathcal M (F)_{m+2l}}} \ar @{.>}[rrrr] & & & & \mathcal M (F) \ar[uurrr]_{\beta}\ar @/^/[uuuuu]^{\psi_0}
}
$$

We will show how to do this in Section \ref{ProofMainTheorem}. For now, we assume that we have established that diagram $(\diamondsuit)$ commutes. Using this,
we are ready to finish the proof of Theorem \ref{theor:Kiffgr-iso unital}. Observe that, using the commutativity
of the front and the front-back diagrams,
we have well-defined
algebra isomorphisms $$\varphi _0, \varphi _1\colon \mathcal M (E) \to \mathcal M (F),$$ defined by
$$\varphi _{\varepsilon} (j^E_{kl+\varepsilon,\infty}(x)) = j^F_{kl+m+\varepsilon,\infty} (\phi _{kl+\varepsilon}(x))$$
for $x\in \mathcal M (E)_{kl+\varepsilon}$, $\varepsilon = 0,1$.

\smallskip

Similarly, we have well-defined algebra isomorphisms $\psi _{\varepsilon}\colon \mathcal M (F)\to \mathcal M (E)$, for $\varepsilon = 0,1$.
Observe that $\psi _{\varepsilon}$ is the inverse of $\varphi _{\varepsilon}$.

\smallskip

Notice that we have $\beta \cdot \varphi_0 = \varphi_1 \cdot \alpha$.

\begin{lem}
 \label{lem:locallyinner}
 Let $g=\psi _0 \varphi _1\in {\rm Aut}(\mathcal M (E))$. Then $g$ is a locally inner automorphism of $\mathcal M (E)$.
\end{lem}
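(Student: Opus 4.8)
The plan is to compute the induced map $K_0(g)$ on $K_0(\mathcal M(E))$ and show it is the identity; then the assertion that $g$ is locally inner follows immediately from \cite[Lemma 15.23(b)]{vnrr}, since $\mathcal M(E) = L(E)_0$ is an ultramatricial algebra. So the whole content reduces to the $K_0$-computation, and the key observation is that the algebra isomorphisms $\varphi_0, \varphi_1$ were constructed as limits of the lifts $\phi_k$ appearing in diagram $(\diamondsuit)$, whose $K_0$-effect is prescribed by diagram $(\dag)$: namely $K_0(\phi_k) = S$ (the shift-equivalence matrix) at each level, shifted by $m$ in the index. Thus, tracking through the direct limits, $K_0(\varphi_\varepsilon)\colon G = K_0(\mathcal M(E)) \to K_0(\mathcal M(F)) = H$ is, for both $\varepsilon = 0,1$, exactly the map $\Phi$ determined by $S$ (with the appropriate value of $m$); in particular $K_0(\varphi_0) = K_0(\varphi_1)$.

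First I would make this precise. Since $\varphi_\varepsilon(j^E_{kl+\varepsilon,\infty}(x)) = j^F_{kl+m+\varepsilon,\infty}(\phi_{kl+\varepsilon}(x))$ and, by the construction in Section \ref{ProofMainTheorem} together with the identification of $(\dag)$ and $(\diamondsuit)$, $K_0(\phi_{kl+\varepsilon}) = S \colon G_{kl+\varepsilon} = \Z^N \to \Z^M = H_{kl+m+\varepsilon}$, the induced map satisfies $K_0(\varphi_\varepsilon)(\iota^E_{n,\infty}(x)) = \iota^F_{n+m,\infty}(Sx)$ for $x \in G_n$, for every $n$ of the form $kl+\varepsilon$ — hence, by cofinality of such $n$ in $\N$, for all $n$. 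This expression does not depend on $\varepsilon$, so $K_0(\varphi_0) = K_0(\varphi_1) =: \Phi$. Likewise $\psi_0$ is the inverse of $\varphi_0$, so $K_0(\psi_0) = \Phi^{-1}$. Therefore
$$
K_0(g) = K_0(\psi_0 \varphi_1) = K_0(\psi_0)\circ K_0(\varphi_1) = \Phi^{-1}\circ \Phi = \mathrm{id}_{K_0(\mathcal M(E))}.
$$

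Finally, since $\mathcal M(E) = L(E)_0$ is ultramatricial and $K_0(g) = \mathrm{id}$, \cite[Lemma 15.23(b)]{vnrr} gives that $g$ is locally inner, which is the claim. The only genuinely delicate point is the bookkeeping asserting that $K_0(\phi_k) = S$ at every level — i.e., that the lifting carried out in Section \ref{ProofMainTheorem} really does realize diagram $(\dag)$ on $K_0$ on the nose, for both the $\varepsilon = 0$ and $\varepsilon = 1$ strands — and that the two resulting limit isomorphisms $\varphi_0,\varphi_1$ therefore induce literally the same map on $K_0$; everything else is formal manipulation of direct limits. I expect this index-chasing to be the main obstacle, but it is guaranteed by the way $(\diamondsuit)$ was built to lift $(\dag)$.
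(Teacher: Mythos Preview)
Your proof is correct and follows essentially the same approach as the paper: reduce to showing $K_0(g)=\mathrm{id}$ and then invoke \cite[Lemma 15.23(b)]{vnrr}. The only cosmetic difference is in how the $K_0$-computation is organized: the paper checks directly at finite levels that $K_0(\psi_{m+(k+1)l}\cdot j^F_{m+kl+1,m+(k+1)l}\cdot \phi_{kl+1})=RB^{l-1}S=A^{l-1}RS=A^{2l-1}=K_0(j^E_{kl+1,(k+2)l})$, whereas you argue (using $SA=BS$) that $K_0(\varphi_0)=K_0(\varphi_1)=\Phi$ and hence $K_0(g)=\Phi^{-1}\Phi=\mathrm{id}$---the same shift-equivalence relations, just packaged differently.
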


\begin{proof}
 By the previous comment, it suffices to observe that $K_0(g)= \text{id}$. For this it is enough to show that, for any $k\in \N$, we have
 $$K_0(j^E_{kl+1,(k+2)l}) =  K_0 ( \psi _{m+(k+1)l}\cdot j^F_{m+kl+1, m+(k+1)l} \cdot \phi _{kl+1} ).$$

Note that
$$K_0(\psi_{m+(k+1)l}\cdot j^F_{m+k1+1,m+(k+1)l}\cdot  \phi_{kl+1})=  RB^{l-1}S= A^{l-1}RS=A^{2l-1}= K_0(j^E_{kl+1, (k+2)l}) \, ,$$
where we have used that $RB=AR$ and that $RS=A^l$. This shows the result.
 \end{proof}

 {\it Proof of Theorem \ref{theor:Kiffgr-iso unital}:}
 Set  $g= \psi _0 \varphi _1$. By Lemma \ref{lem:locallyinner}, $g$ is a locally inner automorphism of $\mathcal M (E)_0$.
 Put $\alpha ' = g\alpha $. In order to show that
$$L^g(E) =\mathcal M (E)[t_+,t_-, \alpha '] \cong \mathcal M (F)[s_+,s_-,\beta ]= L(F) \, ,$$
we first observe that  $\beta \cdot \varphi _0 = \varphi _0 \cdot \alpha'$. Indeed, we have
$$\varphi _0 \cdot \alpha'= \varphi _0 \cdot \varphi _0 ^{-1} \cdot \varphi _1\cdot \alpha = \varphi _1\cdot \alpha = \beta  \cdot \varphi_0.$$

Since $$s_+\varphi _0 (a) s_- = \beta (\varphi _ 0(a)) = \varphi _ 0 (\alpha ' (a))$$
for all $a\in \mathcal M (E)$, the universal property of $\mathcal M (E)[t_+,t_{-}; \alpha ']$ gives that there is a unique algebra homomorphism
$$\varphi: \mathcal M (E)[t_+,t_{-}; \alpha '] \rightarrow \mathcal M (F)[s_+,s_{-};\beta] $$
such that  $\varphi|_{ \mathcal M (E)}=\varphi_0$,
$\varphi (t_+)=s_+$ and $\varphi(t_{-})=s_{-}$.
Clearly $\varphi $ is a graded homomorphism.

The inverse of $\varphi$ is the unique algebra homomorphism
$$\psi \colon \mathcal M (F)[s_+,s_{-}; \beta] \rightarrow \mathcal M (E)[t_+,t_{-}; \alpha] $$
such that $\psi|_{ \mathcal M (F)}=\psi_0$,
$\psi(s_+)=t_+$ and $\psi(s_{-})=t_{-}$.
\qed

\vspace{.2truecm}

\begin{rema}
 \label{rem:no-iso}
 {\rm   Clearly, we have $L(E)\cong _{gr} L^g(E)$ when $g$ is an inner automorphism of $\mathcal M (E)$.
 As we shall see now, one can easily show that there are locally inner automorphisms $g$ of $\mathcal M(E)$ such that $L(E)\ncong_{gr} L^g(E)$, and indeed such that
 $L^g(E)\ncong L(F)$ for any finite graph $F$.
 However the automorphism $g$ produced in the proof of Theorem \ref{theor:Kiffgr-iso unital} is of a special form, and we have not been able to determine
 whether $L(E)\cong L^g(E)$ in that case.

 Now let us give an example of a locally inner $g$ such that $L^g(E)$ is not finitely generated as a $K$-algebra. This of course implies that
 $L^g(E)\ncong L(F)$ for any finite graph $F$, because $L(F)$ is a finitely generated $K$-algebra.

 Let $K$ be an infinite field.

Let $A=\begin{pmatrix} 1 & 0 \\ 1 & 1 \end{pmatrix}$, and let $E$ be
 the graph with adjacency matrix $A^t$. So $E^0=\{v,w\}$, and $E^1=\{ e,f,g  \}$, with $s(e)=r(e)= s(f)= v$ and
 $r(f)= s(g)= r(g) = w$. We consider the elements $t_+= e+g$, $t_-= e^*+g^*$, and the corner isomorphism $\alpha $ given by
 $\alpha (x) = t_+xt_-$ for $x\in \mathcal M (E)$.

  Observe that $\mathcal M (E)_n=K\times M_{n+1}(K)$, with transition maps given by
 $$(\lambda , Z) \mapsto (\lambda , \begin{pmatrix} \lambda & 0  \\ 0 & Z \end{pmatrix}).$$
 With this picture, $\alpha $ is given by
  $$(\lambda , Z) \mapsto (\lambda , \begin{pmatrix} 0 & 0  \\ 0 & Z \end{pmatrix}).$$
 We can build a locally inner automorphism $g$ of $\mathcal M (E)$ as follows. Let $(\xi _n: n\ge 1)$ be
 a sequence of elements of $K^{\times}$. We require that $\xi_{n+1}$ does not belong to the multiplicative group generated by
 $\xi_1,\dots , \xi_n$ for all $n\ge 1$. (Note that such a sequence exists, since $K$ is an infinite field.)
  Then define $U_0= (1,1)$,  and inductively $U_n= (\xi_n, X_n)$, with $X_{n}= \text{diag}(\xi_n , X_{n-1})$
for $n\ge 1$, where $X_0=1$. Set
$$g(x) = U_nxU_n^{-1},\qquad x\in \mathcal M (E)_n .$$
Then $g$ is a locally inner automorphism of $\mathcal M (E)$. Observe that $\alpha \circ g = g\circ \alpha$.

Set $L^g(E) = \mathcal M (E) [s_+,s_-,g\circ \alpha ]$. We show that $L^g(E)$ is not finitely generated as a $K$-algebra.
Assume that $L^g(E)$ is finitely generated. Then there is $n_0\ge 1$ such that $L^g(E)$ is generated by
$\mathcal M (E)_{n_0}$ and $s_+,s_-$. Set  $f^{(n)}= (1,0)\in  \mathcal M (E)_n$, and denote by $e^{(n)}_{ij}$,  $1\le i,j\le n+1$
the matrix units of the factor $M_{n+1}(K)$ of $\mathcal M (E)_n$. Set
$$\mathcal B = \{f^{(n_0)}\}\cup \{ e^{(n_0)}_{ij}: 1\le i,j\le n+1 \}, $$
which is a linear basis for $\mathcal M (E)_{n_0}$. Then every element $x$ in $\mathcal M (E)$ can be written as a linear combination of the
family
$$ \mathcal B ' = \{ b_rs_+b_{r-1}s_+ \cdots s_+b_0s_-b_{-1}s_-\cdots s_-b_{-r}: \,\, b_{-r},\dots ,b_0,\dots , b_r\in \mathcal B  \}.$$
(This uses the fact that $s_-bs_+= (g\alpha )^{-1} (s_+s_-bs_+s_-) \in  \mathcal M (E)_{n_0}$ for all $b\in \mathcal B$.)
Hence we can write
$$e^{(n_0+1)}_{1, n_0+2} = \sum _{b'\in \mathcal F} \lambda _{b'} b',$$
where $\mathcal F$ is a finite linearly independent subset of $\mathcal B '$, and $\lambda_{b'}\in K^{\times}$.
Observe that for $$b'=  b_rs_+b_{r-1}s_+ \cdots s_+b_0s_-b_{-1}s_-\cdots s_-b_{-r}\in \mathcal B ',$$
we have
\begin{align*}
 g(b') & = g(b_rs_+b_{r-1}s_+ \cdots s_+b_0s_-b_{-1}s_-\cdots s_-b_{-r}) \\
 &= g\Big( b_r(s_+b_{r-1}s_-)(s_+^2b_{r-2}s_-^2)\cdots (s_+^rb_0s_-^r)\cdots (s_+b_{-r+1}s_-)b_{-r}\Big)\\
 & = g\Big( b_r (g\alpha(b_{r-1}))(g\alpha)^2(b_{r-2}) \cdots (g\alpha)^r(b_0)\cdots (g\alpha)(b_{-r+1}) b_{-r}\Big) \\
 & = g(b_r) (g\alpha(g(b_{r-1})))(g\alpha)^2(g(b_{r-2})) \cdots (g\alpha)^r(g(b_0))\cdots (g\alpha)(g(b_{-r+1})) g(b_{-r})\Big) \\
 & = \zeta _{b'} b'
 \end{align*}
where $\zeta_{b'}\in \langle \xi_1,\dots ,\xi_{n_0}\rangle $, the subgroup of $K^{\times}$ generated by $\xi_1,\dots , \xi_{n_0}$.
Therefore we obtain
$$ \xi _{n_0+1} e^{(n_0+1)}_{1,n_0+2}= g(e^{(n_0+1)}_{1,n_0+2}) = \sum _{b'\in \mathcal F} \lambda _{b'} \zeta_{b'} b'.$$
Since $\mathcal F$ is a linearly independent family, we get $\xi_{n_0+1} =\zeta_{b'}$
for all $b'\in \mathcal F$. Since $\zeta_{b'}\in \langle \xi_1,\dots ,\xi_{n_0}\rangle $, we obtain a contradiction with
the choice of the family $(\xi_n)$. }
\end{rema}

 \section{Lifting the diagram}\label{ProofMainTheorem}

In this section we will proceed to prove Theorem \ref{theor:Kiffgr-iso unital}. According to the arguments
in Section \ref{MainSection}, it is enough to perform the correct lifting of the diagram $(\dag)$ to a diagram
$(\diamondsuit)$.\vspace{.2truecm}

The goal is to define algebra isomorphisms $\varphi _0, \varphi _1\colon \mathcal M (E) \to \mathcal M (F)$ by
$$\varphi _{\varepsilon} (j^E_{kl+\varepsilon,\infty}(x)) = j^F_{kl+m+\varepsilon,\infty} (\phi _{kl+\varepsilon}(x))$$
for $x\in \mathcal M (E)_{kl+\varepsilon}$, $\varepsilon = 0,1$ lifting the group diagram, so that $\beta\cdot \varphi_0=\alpha\cdot \varphi_1$.
In order to do that in a clear way, we will divide the proof in several intermediate steps.\vspace{.2truecm}

\begin{nota}\label{Notation:BigThm}
{\rm We start by fixing some notation that will be useful in the sequel. We will denote $E^0=\{v_1, \dots,v_N\}$ and $F^0=\{w_1,
\dots,w_M\}$. Also, for $d\geq 1$, we will denote $E^dv_i=\{\gamma
\in E^d \mid r(\gamma)=v_i\}$ and $v_iE^d=\{\gamma \in
E^d\mid s(\gamma)=v_i\}$ (analogously for the $w_j$'s of $F^0$); also, we will denote $v_jE_dv_i:=v_jE^d\cap E^dv_i$. For
a path $\gamma $ in $E$ of length $t$ and $d\ge t$, we will denote
by $\gamma E^{d-t}$ the set of paths $\lambda \in E^d$ such that
the initial segment of $\lambda $ of length $t$ is equal to $\gamma
$. Of course, similar notations apply to $F$.

In order to define correctly the maps $\alpha, \beta$, we fix for
each $1\leq i\leq N$ an edge $e_i\in r^{-1}(v_i)$, and analogously we
fix for each $1\leq j\leq M$ an edge $f_j\in r^{-1}(w_j)$. (Note that we use our hypothesis that $E$ and $F$ do not have sources here.)
Then, we
define:\begin{enumerate}
\item $t_+:=\sum\limits_{i=1}^N e_i$ and $t_-:=\sum\limits_{i=1}^N {e_i}^*$.
\item $s_+:=\sum\limits_{j=1}^M f_j$ and $s_-:=\sum\limits_{j=1}^M {f_j}^*$.
\end{enumerate}
So, the map $\alpha$ is defined by the rule $\alpha (a)=t_+at_-$, while the map $\beta$ is defined by the rule $\beta (b)=s_+bs_-$.

For a path $\gamma $ in $E^d$ we denote by $\widehat{\gamma}$ the path $e_i\gamma$ in $E^{d+1}$, where $v_i=s(\gamma )$.
Similarly, for $\lambda \in w_jF^d$, we set $\widehat{\lambda}:=  f_j \lambda \in F^{d+1}$.
Note that, for $\gamma _1,\gamma _2\in E^d$, we have $\alpha (\gamma _1\gamma _2^*) = \widehat{\gamma_1}\widehat{\gamma_2}^*$, and similarly,
for $\lambda _1,\lambda _2\in F^d$, we have $\beta (\lambda _1\lambda _2^*)= \widehat{\lambda _1}\widehat{\lambda_2}^*$.
}
\end{nota}

The proof consists of constructing certain partitions of the sets of paths of $E$ and $F$,
and using them to define the liftings of $R$ and $S$ along all the diagram.
Once the first three commutative squares of the diagram have been lifted, the rest of the partitions and
the corresponding liftings are completely determined by the choices made so far, so in a sense there is only a
finite number of choices involved in the process. Accordingly, we will divide the section in four subsections, corresponding to the liftings
of the first three commutative diagrams and to the final argument showing how to continue the process inductively to obtain the lifting of all the
squares.

\subsection{Lifting the first (``left end'') commutative square}

We start with the commutative square

$$\xymatrix{\Z^N\ar[rr]^{\Omega_0} \ar[dd]_{S} & & \Z^N\ar[dd]^{S}\\
 & & \\
\Z^M\ar[rr]_{\Omega '_m} & & \Z^M}
$$

and we lift it to a commutative square

$$\xymatrix{\mathcal M (E)_0\ar[rr]^{\alpha}\ar[dd]_{\phi_0} & & \mathcal M (E)_1\ar[dd]^{\phi_1}\\
 & & \\
\mathcal M (F)_m\ar[rr]_{\beta} & & \mathcal M (F)_{m+1} }
$$
where $\phi_0, \phi_1$ are unital morphisms. Because of the previous data, $1\leq m\leq l$ are fixed across the whole process. Here $A=(a_{ik})$ and $B=(b_{jt})$ are the
transposes of the adjacency matrices for $E$ and $F$ respectively. The matrices $S$ and $R$ have components denoted by $s_{ji}$ and $r_{ij}$ respectively.
We will consistently use $i$ to denote an index in the range $\{ 1, \dots ,N \}$,  and $j$ to denote an index in the range
$\{ 1,\dots , M\}$.

Now, we will define $\phi _0$. Since $S$ is a group homomorphism preserving the order-unit, we have that $|F^mw_j|=\sum
_{i=1}^N s_{ji}$. So, we can consider a partition
$$F^mw_j=\bigsqcup _{i=1}^N \Lambda_{j}^{m, v_i}$$
where $|\Lambda _{j}^{m, v_i}|= s_{ji}$ for all $j,i$. We will write
$$\Lambda _{j}^{m, v_i}=\{\lambda_{j1}^0(v_i)\dots ,\lambda_{js_{ji}}^0(v_i)\}.$$
We define a map $\phi_0\colon \mathcal M (E)_0 \to \mathcal M (F)_m$ by the rule
$$\phi _0(v_i)= \sum _{j=1}^M \sum _{\lambda \in \Lambda _{j}^{m, v_i}} \lambda \lambda ^*.$$
Note that
$$[\phi _0(v_i)]= \sum_{j=1}^M s_{ji}[w_j]=S[v_i]\, ,$$
so that $\phi _0$ lifts $S$.

Also, since $1=\sum\limits_{i=1}^N v_i$ and
$F^m=\bigsqcup\limits_{j=1}^M F^mw_j$, we have
$$\phi_0(1)=\sum\limits_{i=1}^N \phi_0(v_i)=\sum\limits_{i=1}^N\sum _{j=1}^M \sum _{\lambda \in \Lambda_{j}^{m, v_i}} \lambda \lambda ^*=
\sum _{j=1}^M \sum _{\lambda \in F^mw_j} \lambda \lambda ^*=\sum
_{\lambda \in F^m} \lambda \lambda ^*=1$$ where the last equality
follows from (CK2) and the fact that $F$ does not have sinks.

\vspace{.2truecm}

Now, we proceed to define $\phi_1$.  For this, we consider $F^{m+1}w_j$. Observe that
$$|F^{m+1}w_j|= \sum _{k=1}^M b_{jk}|F^mw_k|=
\sum _{k=1}^M \sum _{i=1}^N b_{jk}s_{ki}= \sum _{i=1}^N
(\sum_{k=1}^N s_{jk}a_{ki}) ,$$ where we have used that $SA=BS$. Notice that $a_{ki}$ is the number of edges in $v_iE^1v_k$. It
follows that we can consider a partition
\begin{equation} \label{eq:sqdecom1}
F^{m+1}w_j=\bigsqcup _{i=1}^N \bigsqcup_{k=1}^N \bigsqcup
_{\{e\in v_iE^1v_k \}} \Lambda _{j}^{m+1, e}\, ,
\end{equation}
with $|\Lambda _{j}^{m+1, e}|= s_{jk}$ for all $e\in v_iE^1v_k$. We will write
$$\Lambda _{j}^{m+1, e}=\{\lambda_{j1}^1(e), \dots , \lambda_{js_{jk}}^1(e)\}.$$
Moreover, we can choose the partition
(\ref{eq:sqdecom1}) in such a way that
\begin{equation} \label{eq:Ident1}
\Lambda _{j}^{m+1, \widehat{v_i}}=\Lambda_j^{m+1, e_i}= \{  \widehat{\lambda} \mid \lambda \in \Lambda_j^{m,v_i}\}.
\end{equation}
This makes sense because the family on the right hand
side is a family of $s_{ji}$ paths of length $m+1$ ending in $w_j$. In fact, we can fix as a condition
$$
\textbf{(\mbox{Ex }1)}\hspace{1 truecm} \lambda_{jt}^1(e_k)=\widehat{\lambda_{jt}^0(v_k)}
$$
for all the possible values of $j,k,t$. This is coherent with the possible values of these variables, and clearly Condition (Ex 1) implies (\ref{eq:Ident1}).

Now we define $\phi_1\colon \mathcal M (E)_1\to \mathcal M (F)_{m+1}$ as follows.
Given $e,f\in E^1$ such that $r(e)=v_k=r(f)$, we put:
$$\phi_1 (ef^*)= \sum _{j=1}^M \sum _{p=1}^{s_{jk}} \lambda_{j,p}^1(e)
\lambda _{j,p}^1(f)^*\, .$$

It is easy to show that $\phi_1$ is a well-defined unital homomorphism,
such that $K_0(\phi_1)=S$. Let us show that $\phi_1\alpha=\beta
\phi_0 $:
$$\phi _1(\alpha (v_i))=\phi _1(e_i e_i^*)= \sum _{j=1}^M \sum _{ \lambda \in \Lambda_j^{m+1, e_i}  } \lambda\lambda^*
= \sum _{j=1}^M \sum _{ \lambda\in \Lambda_j^{m,v_i} } \widehat{\lambda}\widehat{\lambda}^*= \beta (\phi_0(v_i)).
$$
where we have used (\ref{eq:Ident1}) for the third equality.

\vspace{.2truecm}

\subsection{Lifting the second (``diagonal'') commutative square}

We will take the commutative diagram

$$\xymatrix{\Z^N\ar[rr]^{\Omega_l}  & & \Z^N\\
 & & \\
\Z^M\ar[rr]_{\Omega'_m} \ar[uu]_{R}& & \Z^M\ar[uu]^{R}}
$$

and we will lift it to a commutative diagram

$$\xymatrix{\mathcal M (E)_{l}\ar[rr]^{\alpha} & & \mathcal M (E)_{l+1}\\
 & & \\
\mathcal M (F)_{m}\ar[rr]_{\beta} \ar[uu]_{\psi_m}& & \mathcal M (F)_{m+1} \ar[uu]^{\psi_{m+1}}}
$$
where $\psi_m, \psi_{m+1}$ are unital morphisms satisfying that $\psi_m\cdot \phi_0=j_{0,l}^E$ and $\psi_{m+1}\cdot \phi_1=j_{1,l+1}^E$.\vspace{.2truecm}

First, we will define $\psi_m$. Since  $\sum\limits_{j=1}^M
r_{ij}s_{jk}=(A^l)_{ik}$ is the number of paths in $v_kE^lv_i$ and $|\Lambda_j^{m,v_k}|= s_{jk}$, we can consider a
partition
\begin{equation}\label{eq:Part2}
v_kE^lv_i =\bigsqcup\limits _{j=1}^M \bigsqcup\limits _{\{\lambda \in \Lambda _j^{m, v_k}\}} \Gamma_{i}^{l,\lambda}
\end{equation}
with $\vert \Gamma_{i}^{l,\lambda}\vert= r_{ij}$ for all $\lambda\in \Lambda_j^{m,v_k}$.
Notice that (\ref{eq:Part2}) implies
\begin{equation}\label{eq:Part2(i)}
E^lv_i=\bigsqcup\limits _{k=1}^N \bigsqcup\limits _{j=1}^M \bigsqcup\limits _{\{\lambda \in \Lambda_j^{m, v_k}\}} \Gamma_{i}^{l,\lambda}
\end{equation}
and
\begin{equation}\label{eq:Part2(ii)}
v_kE^l=\bigsqcup\limits _{i=1}^N \bigsqcup\limits _{j=1}^M \bigsqcup\limits _{\{\lambda\in \Lambda_j^{m, v_k}\}} \Gamma_{i}^{l,\lambda}
\end{equation}

If $\lambda =\lambda_{jt}^0(v_k)\in F^mw_j$, we will write
$$\Gamma_i^{l,\lambda}=\Gamma_i^{l, \lambda_{jt}^0(v_k)}=\{\gamma_{i1}^m(\lambda_{jt}^0(v_k)), \dots ,\gamma_{ir_{ij}}^m(\lambda_{jt}^0(v_k))\}.$$
In particular, $s(\gamma_{ip}^m(\lambda_{jt}^0(v_k)))=v_k$ by (\ref{eq:Part2}).\vspace{.2truecm}

We define a map $\psi_m:\mathcal M (F)_{m}\rightarrow \mathcal M (E)_{l}$ by the rule
$$\psi_m(\lambda _1\lambda_2^*)=\sum\limits_{i=1}^N \sum\limits_{p=1}^{r_{ij}}\gamma_{ip}^m(\lambda_1){\gamma_{ip}^m(\lambda_2)}^*,$$
where $\lambda_1, \lambda_2\in F^mw_j$.\vspace{.2truecm}

As in the previous cases, it is easy to show that $\psi_m$ is a well-defined unital morphism satisfying that $K_0(\psi_m)=R$. Now,
\begin{equation}
\label{eq:News2}
\psi_m\cdot \phi_0(v_k)=\sum\limits_{j=1}^M \sum\limits_{\lambda\in \Lambda_j^{m,v_k}}\psi_m(\lambda \lambda^*)=
\sum\limits_{j=1}^M \sum\limits_{i=1}^N \sum\limits_{\lambda\in \Lambda_j^{m,v_k}} \sum\limits_{\gamma\in \Gamma_i^{l,\lambda}} \gamma \gamma^*
=\sum\limits_{\gamma\in v_kE^l}\gamma \gamma^*=j_{0,l}^E(v_k),
\end{equation}
where we have used (\ref{eq:Part2(ii)}) for the third equality.

Now, we will define $\psi_{m+1}$. Since, for $e\in E^1v_k$, we
have
$$|eE^lv_i|= |e(v_kE^lv_i)|= \sum _{j=1}^M r_{ij}s_{jk}\, , $$ we may consider a
partition
\begin{equation}\label{eq:Part3}
eE^lv_i =\bigsqcup\limits_{j=1}^M \bigsqcup\limits_{\{ \lambda\in \Lambda_j^{m+1, e}\}} \Gamma_i^{l+1, \lambda},
\end{equation}
with $\vert \Gamma_i^{l+1, \lambda}\vert =r_{ij}$ for $\lambda \in
\Lambda_j^{m+1, e}$. Given any $\lambda\in F^{m+1}
w_j$, we will write
$$\Gamma_i^{l+1, \lambda}=\{ \gamma_{i1}^{m+1}(\lambda), \dots , \gamma_{ir_{ij}}^{m+1}(\lambda) \}.$$
For $e\in E^1v_k$, left multiply (\ref{eq:Part2}) by $e$ to get
\begin{equation}\label{eq:News1}
eE^lv_i =e(v_kE^lv_i)= \bigsqcup\limits_{j=1}^M \bigsqcup\limits_{\{ \lambda\in \Lambda_j^{m, v_k}\}} e \Gamma_i^{l, \lambda}.
\end{equation}
Since there is a bijection $\lambda_{jt}^0 (v_k) \longleftrightarrow  \lambda^1_{jt}(e)$ between $\Lambda_j^{m,v_k}$ and $\Lambda _j^{m+1, e}$, we may
take as definition of the $\gamma _{ip}^{m+1}(\lambda _{jt}^1 (e))$ the following:
\begin{equation}\label{eq: Cond2}
\gamma_{ip}^{m+1}(\lambda_{jt}^1(e)):=e\gamma_{ip}^{m}(\lambda_{jt}^0(v_k)),
\end{equation}
for $e\in E^1v_k$, which is coherent with (\ref{eq:Part3}).

Now, we define a map $\psi_{m+1}:\mathcal M (F)_{m+1}\rightarrow \mathcal M (E)_{l+1}$ by the rule
$$\psi_{m+1}(\lambda _1\lambda_2^*)= \sum\limits_{i=1}^N \sum\limits_{p=1}^{r_{ij}}\gamma_{ip}^{m+1}(\lambda_1){\gamma_{ip}^{m+1}(\lambda_2)}^*.$$
for any $\lambda_1, \lambda_2 \in F^{m+1} w_j$
As in the previous cases, it is easy to show that $\psi_{m+1}$ is a well-defined unital morphism satisfying that $K_0(\psi_{m+1})=R$.

Given $e,f\in E^1 v_k$, we compute that
$$\psi_{m+1}\cdot \phi_1(ef^*)=\sum\limits_{j=1}^M \sum\limits_{p=1}^{s_{jk}}\psi_{m+1}(\lambda_{jp}^1(e){\lambda_{jp}^1(f)}^*)= e j^E_{0,l}(v_k)f^*= j^E_{1,l+1}(ef^*),$$
where we have used (\ref{eq: Cond2}) and (\ref{eq:News2}).

Now, using (\ref{eq: Cond2}) and (Ex 1) we compute
$$e_k \gamma_{ip}^m(\lambda_{jt}^0(v_k))= \gamma_{ip}^{m+1}(\lambda_{jt}^1(e_k)) =\gamma_{ip}^{m+1}(\widehat{\lambda_{jt}^0(v_k)}).$$

Summarizing, for any $\lambda \in \Lambda_j^{m, v_k}$ we proved that
$$ \textbf{(\mbox{Ex }2)}\hspace{1 truecm} \widehat{\gamma_{ip}^m(\lambda)}=\gamma_{ip}^{m+1}(\widehat{\lambda}).$$

Finally, given $\lambda_1 \in \Lambda_j^{m, v_{k_1}}$, $\lambda_2 \in \Lambda_j^{m, v_{k_2}}$, we compute that
$$\alpha\cdot \psi_m (\lambda_1{\lambda_2}^*)= \sum\limits_{i=1}^N \sum\limits_{p=1}^{r_{ij}} \widehat{\gamma_{ip}^m(\lambda_1)}\widehat{\gamma_{ip}^m(\lambda_2)}^* =
\sum\limits_{i=1}^N \sum\limits_{p=1}^{r_{ij}} \gamma_{ip}^{m+1}(\widehat{\lambda_1})\gamma_{ip}^{m+1}(\widehat{\lambda_2})^*= \psi_{m+1}\cdot \beta (\lambda_1\lambda_2^*),$$
showing that $\alpha  \cdot \psi_m= \psi_{m+1}\cdot \beta $, as desired.\vspace{.2truecm}

\subsection{Lifting the third (``second from the left end'') commutative square}

Next, we will perform the lifting of the third commutative square of
diagram $(\dag)$ to that of diagram $(\diamondsuit)$. We will take the
commutative diagram

$$\xymatrix{\Z^N\ar[rr]^{\Omega _l} \ar[dd]_{S} & & \Z^N\ar[dd]^{S}\\
 & & \\
\Z^M\ar[rr]_{\Omega '_{m+l}} & & \Z^M}
$$

and we will lift it to a commutative diagram

$$\xymatrix{\mathcal M (E)_{l}\ar[rr]^{\alpha}\ar[dd]_{\phi_l} & & \mathcal M (E)_{l+1}\ar[dd]^{\phi_{l+1}}\\
 & & \\
\mathcal M (F)_{m+l}\ar[rr]_{\beta} & & \mathcal M (F)_{m+l+1} }
$$
where $\phi_l, \phi_{l+1}$ are unital morphisms satisfying that $\phi_l\cdot\psi_m = j_{m,m+l}^F$ and $\phi_{l+1}\cdot\psi_{m+1} = j_{m+1,m+l+1}^F$.\vspace{.2truecm}

As in the previous case, we start by defining a suitable
partition for $F^{m+l}w_j$. For $\lambda\in F^m w_k$,  we can define a partition
\begin{equation}\label{eq:Part4}
\lambda F^{l} w_j=\bigsqcup\limits_{i=1}^N \bigsqcup\limits_{\{ \gamma\in \Gamma_i^{l,\lambda}\}} \Lambda_j^{m+l, \gamma}
\end{equation}
where $\vert \Lambda_j^{m+l, \gamma} \vert= s_{ji}$ for $\gamma \in
\Gamma _i^{l, \lambda }$. Given any
$\gamma\in \Gamma_i^{l,\lambda}$, we will
write
$$\Lambda_j^{m+l, \gamma}=\{ \lambda_{jq}^l(\gamma)\mid 1\leq q\leq s_{ji}\}.$$

Now, we will fix a condition analog to (\ref{eq: Cond2}). We observe that
for any $1\leq j,k\leq M$ we have $\sum\limits_{i=1}^N s_{ji}r_{ik}=(B^l)_{jk}$. Thus, we can consider an enumeration
\begin{equation}\label{Eq:OtraMas}
w_kF^lw_j=\{ \lambda_{ipq}^{jk}\mid 1\leq i\leq N, 1\leq p\leq r_{ik}, 1\leq q\leq s_{ji}\}.
\end{equation}
Since $r(\lambda^0_{kt}(v_d))=w_k$ for all $t,d$, we can fix as definition
\begin{equation}\label{eq:Cond3}
\lambda_{jq}^l(\gamma_{ip}^m(\lambda_{kt}^0(v_d))):=\lambda_{kt}^0(v_d)\cdot \lambda_{ipq}^{jk}.
\end{equation}

Now, we define $\phi_l: \mathcal M (E)_{l}\rightarrow \mathcal M (F)_{m+l}$ as follows: for any $\gamma_1, \gamma_2 \in E^l v_i$,
$$\phi_l(\gamma _1\gamma_2 ^*)=\sum\limits_{j=1}^M\sum\limits_{q=1}^{s_{ji}}\lambda_{jq}^l(\gamma_1)\lambda_{jq}^l(\gamma_2)^*.$$
It is easy to show that $\phi_l$ is a unital morphism satisfying
$K_0(\phi_l)=S$.

Given $\lambda_1,\lambda_2 \in F^mw_k$, we compute
$$\phi_l\cdot \psi_m (\lambda_1 \lambda_2^*)=\sum\limits_{i=1}^N \sum\limits_{p=1}^{r_{ik}}\phi_l(\gamma_{ip}^m(\lambda_1)\gamma_{ip}^m(\lambda_2)^*)=
\sum\limits_{i=1}^N \sum\limits_{j=1}^M \sum\limits_{p=1}^{r_{ik}} \sum\limits_{q=1}^{s_{ji}}
\lambda_{jq}^l(\gamma_{ip}^m(\lambda_1))\lambda_{jq}^l(\gamma_{ip}^m(\lambda_2))^*$$
that by (\ref{eq:Cond3}) equals
$$ \lambda_1\left(\sum\limits_{i=1}^N \sum\limits_{j=1}^M \sum\limits_{p=1}^{r_{ik}} \sum\limits_{q=1}^{s_{ji}}
\lambda_{ipq}^{jk}(\lambda_{ipq}^{jk})^*\right)\lambda_2^*$$
where the sums
ranges through the set
$$\bigsqcup\limits_{j=1}^M(w_kF^lw_j)=w_kF^l,$$
so that the above sum equals $j_{m, m+l}^F(\lambda_1\lambda_2^*)$, as
desired.\vspace{.2truecm}

Now, we proceed to define a partition for $F^{m+l+1} w_j$. For $\lambda\in
F^{m+1}w_k$,  we can define a partition
\begin{equation}\label{eq:Part5}
\lambda F^{l} w_j=\bigsqcup\limits_{i=1}^N \bigsqcup\limits_{\{ \gamma\in \Gamma_i^{l+1,\lambda}\}} \Lambda_j^{m+l+1, \gamma}
\end{equation}
where $\vert \Lambda_j^{m+l+1, \gamma} \vert= s_{ji}$ for $\gamma \in
\Gamma_i^{l+1,\lambda}$. Given any $\gamma=\gamma_{ip}^{m+1}(\lambda)\in
\Gamma_i^{l+1,\lambda}$, we will write
$$\Lambda_j^{m+l+1, \gamma}=\{ \lambda_{js}^{l+1}(\gamma)\mid 1\leq s\leq s_{ji}\}.$$

In analogy with (\ref{eq:Cond3}), for any $\lambda=\lambda_{kq}^1(f)\in F^{m+1} w_k$ we fix as definition
\begin{equation}\label{eq:Cond4}
\lambda_{js}^{l+1}(\gamma_{ip}^{m+1}(\lambda)):=\lambda \cdot \lambda_{ips}^{jk}.
\end{equation}

Now, we define $\phi_{l+1}: \mathcal M (E)_{l+1}\rightarrow \mathcal M (F)_{m+l+1}$ as follows: for any $\gamma_1, \gamma_2 \in E^{l+1}v_i$,
$$\phi_{l+1}(\gamma_1\gamma_2^*)=\sum\limits_{j=1}^M\sum\limits_{s=1}^{s_{ji}}\lambda_{js}^{l+1}(\gamma_1)\lambda_{js}^{l+1}(\gamma_2)^*.$$
It is easy to show that $\phi_{l+1}$ is a unital morphisms satisfying $K_0(\phi_{l+1})=S$.
A similar computation to the one given above shows that $\phi_{l+1}\cdot \psi_{m+1}= j_{m+1,m+l+1}^F$.

Now, let $\lambda \in F^m w_k$. Using $\textbf{(\mbox{Ex }2)}$, (\ref{eq:Cond4}) and (\ref{eq:Cond3}), we get
$$\lambda_{js}^{l+1}(\widehat{\gamma_{ip}^m(\lambda)})= \lambda_{js}^{l+1}(\gamma_{ip}^{m+1}(\widehat{\lambda}))=
\widehat{\lambda}\cdot \lambda_{ips}^{jk}= \widehat{\lambda \cdot \lambda_{ips}^{jk}}= \widehat{\lambda_{js}^l(\gamma_{ip}^m(\lambda))}.$$
Therefore, for $\gamma \in \Gamma _i^{l,\lambda}$, we have
$$\textbf{(\mbox{Ex }3)}\hspace{1truecm} \lambda_{js}^{l+1}(\widehat{\gamma})= \widehat{\lambda_{js}^l(\gamma )}.$$
As before, this implies that $\phi_{l+1}\cdot \alpha =\beta\cdot \phi_{l}$.

\subsection{The inductive argument}
\label{subsec:inductivearg}

We are ready to state the induction hypothesis and show the
inductive step. Note that everything is determined by the choices
we have made in the previous subsections.

Let $n\in \N$, let $\varepsilon = 0,1$, and
suppose that for any $1\leq k\leq n$ we have constructed:
\begin{enumerate}
\item \underline{Partitions}:
\begin{enumerate}
\item Given $\gamma\in E^{(k-1)l+\varepsilon}$, $1\le i \le N$, we have
$$\gamma E^l v_i = \bigsqcup\limits_{j=1}^M \bigsqcup\limits_{\{ \lambda \in \Lambda_j^{m+(k-1)l+\varepsilon, \gamma }\}}\Gamma_i^{kl+\varepsilon, \lambda},$$
with $| \Gamma_i^{kl+\varepsilon, \lambda } |= r_{ij}$ for all $\lambda \in \Lambda_j^{m+(k-1)l+\varepsilon, \gamma}$.
The elements of the subsets are denoted
$$\Gamma _i^{kl+\varepsilon, \lambda }= \{ \gamma_{ir}^{(k-1)l+m+\varepsilon}(\lambda ) \mid 1\leq r\leq r_{ij}\},$$
where $\lambda =\lambda_{jq}^{(k-1)l+\varepsilon}(\gamma )$, and are defined by the rule
\begin{equation}\label{Eq:(5.9')}
\gamma_{ir}^{(k-1)l+m+\varepsilon}(\lambda_{jq}^{(k-1)l+\varepsilon}(\gamma)):=
\gamma \cdot \gamma_{ir}^{m}(\lambda_{jq}^{0}(r(\gamma ))).
\end{equation}
\item Given $\lambda\in F^{(k-1)l+m+\varepsilon} w_t$, $1\le j\le M$, we have
$$\lambda F^l w_j=\bigsqcup\limits_{i=1}^N \bigsqcup
\limits_{\{ \gamma \in \Gamma _i^{kl+\varepsilon, \lambda }\}} \Lambda_j^{kl+m+\varepsilon, \gamma},$$
with $| \Lambda_j^{kl+m+\varepsilon, \gamma } |= s_{ji}$ for all $\gamma \in \Gamma_i^{kl+\varepsilon, \lambda}$.
The elements of the subsets are denoted
$$\Lambda _j^{kl+m+\varepsilon, \gamma}= \{ \lambda_{jq}^{kl+\varepsilon}(\gamma) \mid 1\leq q\leq s_{ji}\},$$
where $\gamma=\gamma_{ip}^{(k-1)l+m+\varepsilon}(\lambda )$, and are defined by the rule
\begin{equation}\label{Eq:(5.12')}
\lambda_{jq}^{kl+\varepsilon}(\gamma_{ip}^{(k-1)l+m+\varepsilon}(\lambda)):=
\lambda \cdot \lambda_{ipq}^{jt}.
\end{equation}

\end{enumerate}

\item \underline{Exchange identities}:
\begin{enumerate}
\item Given $\lambda\in \Lambda_j^{m+(k-1)l,\gamma }$, we have
$$\textbf{(\mbox{Ex }2k+1)}\hspace{1truecm} \gamma_{ir}^{m+(k-1)l+1}(\widehat{\lambda })= \widehat{\gamma_{ir}^{m+(k-1)l}(\lambda)}.$$
\item Given $\gamma\in \Gamma_i^{kl,\lambda}$, we have
$$\textbf{(\mbox{Ex }2k)}\hspace{1truecm} \lambda_{js}^{kl+1}(\widehat{\gamma})= \widehat{\lambda_{js}^{kl}(\gamma)}.$$
\end{enumerate}

\item \underline{Unital homomorphisms}:
\begin{enumerate}
\item $\psi_{m+(k-1)l+\varepsilon}:\mathcal M (F)_{m+(k-1)l+\varepsilon}\rightarrow \mathcal M (E)_{kl+\varepsilon}$ defined by the rule
$$\psi_{m+(k-1)l+\varepsilon}(\lambda_1 \lambda_2^*)=\sum\limits_{i=1}^N \sum\limits_{r=1}^{r_{ij}}\gamma_{ir}^{m+(k-1)l+\varepsilon}(\lambda_1)
\gamma_{ir}^{m+(k-1)l+\varepsilon}(\lambda_2)^*,$$
where $r(\lambda_1)=r(\lambda_2)=w_j$.
\item $\phi_{kl+\varepsilon}:\mathcal M (E)_{kl+\varepsilon}\rightarrow \mathcal M (F)_{m+kl+\varepsilon}$ defined by the rule
$$\phi_{kl+\varepsilon}(\gamma _1\gamma_2^*)=\sum\limits_{j=1}^M \sum\limits_{q=1}^{s_{ji}}\lambda_{jq}^{kl+\varepsilon}(\gamma_1)
\lambda_{jq}^{kl+\varepsilon}(\gamma_2)^*,$$ where
$r(\gamma_1)=r(\gamma_2)=v_i$.
\end{enumerate}
\end{enumerate}

\underline{satisfying}:
\begin{enumerate}
\item $K_0(\psi_{m+(k-1)l+\varepsilon})=R$ and $K_0(\phi_{kl+\varepsilon})=S$.
\item $\psi_{m+(k-1)l+\varepsilon}\cdot \phi_{(k-1)l+\varepsilon} = j_{(k-1)l+\varepsilon, kl+\varepsilon}^E$ and $\phi_{kl+\varepsilon}\cdot \psi_{m+(k-1)l+\varepsilon}= j_{m+(k-1)l+\varepsilon, m+kl+\varepsilon}^F$.
\item $\alpha\cdot \psi_{m+(k-1)l}=\psi_{m+(k-1)l+1}\cdot \beta$ and $\phi_{kl+1}\cdot \alpha =\beta\cdot \phi_{kl}$.
\end{enumerate}

Now, we will show that the same holds for $k=n+1$. First, we will take the commutative diagram

$$\xymatrix{\Z^N\ar[rr]^{\Omega _{(n+1)l}}  & & \Z^N\\
 & & \\
\Z^M\ar[rr]_{\Omega '_{m+nl}} \ar[uu]_{R}& & \Z^M\ar[uu]^{R}}
$$

and we will lift it to a commutative diagram

$$\xymatrix{\mathcal M (E)_{(n+1)l}\ar[rr]^{\alpha} & & \mathcal M (E)_{(n+1)l+1}\\
 & & \\
\mathcal M (F)_{m+nl}\ar[rr]_{\beta} \ar[uu]_{\psi_{m+nl}}& & \mathcal M (F)_{m+nl+1} \ar[uu]^{\psi_{m+nl+1}}}.
$$

We now define a partition of $E^{(n+1)l+\varepsilon} v_i$. For
$\gamma \in E^{nl+\varepsilon}v_k$, we can take a partition
\begin{equation}\label{eq:Part6}
\gamma E^lv_i = \bigsqcup\limits_{j=1}^M
\bigsqcup\limits_{\{ \lambda\in \Lambda_j^{m+nl+\varepsilon, \gamma}\}} \Gamma_i^{(n+1)l+\varepsilon, \lambda}
\end{equation}
where $\vert \Gamma_i^{(n+1)l+\varepsilon, \lambda}\vert =r_{ij}$ for all $\lambda \in \Lambda_j^{m+nl+\varepsilon, \gamma}$.
Given any $\lambda =\lambda_{jq}^{nl+\varepsilon}(\gamma )\in \Lambda_j^{m+nl+\varepsilon, \gamma}$, we will denote
$$\Gamma_i^{(n+1)l+\varepsilon, \lambda }=\{ \gamma _{ir}^{m+nl+\varepsilon}(\lambda )\mid 1\leq r\leq r_{ij}\}.$$
In analogy with (\ref{eq: Cond2}), we can fix as definition
\begin{equation}\label{eq:(5.9'')}
\gamma_{ir}^{m+nl+\varepsilon}(\lambda_{jq}^{nl+\varepsilon}(\gamma )):=\gamma \cdot
\gamma_{ir}^{m}(\lambda_{jq}^0(r(\gamma ))).
\end{equation}

Now, we take $\gamma \in E^{nl}v_k$, and we compute, using (Ex 2n) and (\ref{eq:(5.9'')}) that
$$\gamma_{ir}^{m+nl+1}(\widehat{\lambda_{jq}^{nl}(\gamma )})= \gamma_{ir}^{m+nl+1}(\lambda _{jq}^{nl+1}(\widehat{\gamma}))
= \widehat{\gamma}\cdot \gamma _{ir}^m(\lambda_{jq}^0 (r(\gamma)))= \widehat{\gamma_{ir}^{m+nl}(\lambda_{jq}^{nl}(\gamma))} .$$
This gives that for all $\lambda\in \Lambda_j^{m+nl,\gamma }$,
$$\textbf{(\mbox{Ex }2n+1)}\hspace{1truecm} \gamma_{ir}^{m+nl+1}(\widehat{\lambda}) = \widehat{\gamma_{ir}^{m+nl}(\lambda)} . $$

In the same way as in the previous cases, we define, for $\lambda_1, \lambda_2 \in F^{m+nl+\varepsilon}w_j$
$$\psi_{m+nl+\varepsilon}(\lambda_1 \lambda_2^*)=\sum\limits_{i=1}^N \sum\limits_{r=1}^{r_{ij}}
\gamma_{ir}^{m+nl+\varepsilon}(\lambda_1)\gamma_{ir}^{m+nl+\varepsilon}(\lambda_2)^*.$$
It is easy to show that $\psi_{m+nl+\varepsilon}$ is a unital
morphism such that $K_0(\psi_{m+nl+\varepsilon})=R$.

Given $\gamma_1,\gamma _2\in E^{nl+\varepsilon} v_k$, one checks, using (\ref{eq:(5.9'')}) and (\ref{eq:Part2(ii)}) that
$$\psi_{m+nl+\varepsilon}\cdot \phi_{nl+\varepsilon}(\gamma_1\gamma_2^*)=
 \gamma_1\left( \sum\limits_{\gamma\in v_kE^l} \gamma \gamma^*\right) \gamma _2^*= j_{nl+\varepsilon,(n+1)l+\varepsilon}^E(\gamma _1\gamma_2^*).$$

Finally, (Ex 2n+1)  gives that $\psi_{m+nl+1}\cdot \beta = \alpha \cdot \psi_{m+nl}$.

The last step of the proof, consisting in defining the sets
$\Lambda_j^{m+(n+1)l+\epsilon, \gamma}$ and the maps $\phi
_{(n+1)l+\epsilon}$ satisfying the appropriate relations, is similar
to the case $n=0$ which we did before in detail. We leave the details
to the reader. \hspace{\fill} $\Box$\vspace{.3truecm}

\section{Uniqueness of liftings}
\label{sect:uniqliftings}

 Let us recall the strong classification conjecture, due to Hazrat:

\begin{conjecture}
\label{conj3:Hazrat}{\rm \cite[Conjecture 3]{Hazrat} The graded Grothendieck group $K_0^{{\rm gr}}$
is a fully faithful functor from the category of unital Leavitt path algebras with graded homomorphisms
modulo inner-automorphisms to the category of pre-oredered abelian groups with order-unit.}
\end{conjecture}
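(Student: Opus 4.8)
The plan is to reduce the statement to a classification problem for the ultramatricial degree-zero subalgebras, carry out an Elliott-type approximate intertwining \emph{compatibly with the grading}, and use stable rank one to make the resulting maps exact in the limit. Throughout I work with finite essential graphs, where $L(E)$ is strongly $\Z$-graded by \cite[Theorem 4]{Hazrat} and $L(E)=L(E)_0[t_+,t_-;\alpha]$; I read the target of $K_0^{\rm gr}$ as pre-ordered $\Z[x,x^{-1}]$-modules with order-unit, consistently with Conjecture \ref{conj:Hazrat}, and I flag the extension to graphs with sinks or sources (where the algebra is no longer strongly graded) as a separate matter. By Dade's Theorem the functor $(-)_0$ gives an equivalence $\mbox{Gr}-L(E)\simeq \mbox{Mod}-L(E)_0$ identifying $K_0^{\rm gr}(L(E))$ with $K_0(L(E)_0)$ as ordered $\Z[x,x^{-1}]$-modules, where $x$ acts by $\mathcal T_*=\alpha_*^{-1}=\delta_A$ (Lemma \ref{Lemma: ThirdEquiv} and Remark \ref{rem:T=deltaA}). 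Under this identification a graded homomorphism $\phi\colon L(E)\to L(F)$ amounts, up to the inner ambiguity, to an algebra homomorphism $\phi_0\colon L(E)_0\to L(F)_0$ intertwining the corner isomorphisms, $\beta\circ\phi_0=\phi_0\circ\alpha$, with $K_0^{\rm gr}(\phi)=K_0(\phi_0)$. Thus full faithfulness becomes the assertion that $\phi_0\mapsto K_0(\phi_0)$ is a bijection from $\{\text{intertwining homomorphisms}\}/\text{(inner)}$ onto the order-unit preserving $\Z[x,x^{-1}]$-module maps $K_0(L(E)_0)\to K_0(L(F)_0)$.

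For \emph{fullness}, given such a map $\Theta$ I would realize it by an intertwining $\phi_0$. Elliott's existence theorem for ultramatricial algebras produces $\phi_0$ with $K_0(\phi_0)=\Theta$, but only intertwining $\alpha,\beta$ up to $K_0$: since $\Theta\alpha_*=\beta_*\Theta$, the two homomorphisms $\beta\phi_0$ and $\phi_0\alpha$ induce the same $K_0$-map, hence by Elliott uniqueness (cf. \cite[Lemma 15.23(b)]{vnrr}) are conjugate by a locally inner $g$. This is exactly the deformation phenomenon of Theorem \ref{theor:Kiffgr-iso unital}, which only yields $L^g(E)\cong_{gr}L(F)$. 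To remove $g$ I would not treat it as a fixed defect but run a genuinely \emph{two-dimensional} intertwining: build finite-stage maps $\mathcal M(E)_n\to\mathcal M(F)_{m(n)}$ and back that approximately commute with \emph{both} the Bratteli connecting maps and the corner maps $\alpha,\beta$ simultaneously, and correct the two families of squares alternately. Because each of $L(E)_0,L(F)_0$ has stable rank one (already exploited in Lemma \ref{lem:cancellationinK0gr}), the correcting units can be chosen so that the errors in both directions are absorbed, and the limit map intertwines $\alpha$ and $\beta$ on the nose. This upgrades $L^g(E)\cong_{gr}L(F)$ to $L(E)\cong_{gr}L(F)$ precisely by exhibiting the defect as a coboundary $g=\eta\alpha\eta^{-1}\alpha^{-1}$ built from the telescoped correction $\eta$, and a one-sided version of the same construction realizes arbitrary $\Theta$, not just isomorphisms.

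For \emph{faithfulness}, suppose graded $\phi,\psi\colon L(E)\to L(F)$ satisfy $K_0^{\rm gr}(\phi)=K_0^{\rm gr}(\psi)$. Restricting, $\phi_0$ and $\psi_0$ induce the same map on $K_0(L(E)_0)$, so Elliott uniqueness makes them locally inner equivalent. The point is to upgrade this approximate degree-zero equivalence to a single \emph{homogeneous} unit $u$ of $L(F)$ with $\psi=\mathrm{ad}(u)\circ\phi$: the conjugating units at successive stages must be chosen to commute with $\beta$ up to the controlled error, which again succeeds by stable rank one, and their telescoped limit is a degree-zero unit conjugating $\phi$ to $\psi$. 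This is the realization and uniqueness content announced in Theorem \ref{theor:realizingiso}, and it shows that the inner ambiguity is exactly the right one.

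The main obstacle is the simultaneous control of the two intertwining directions in the fullness step: a unit chosen to repair a Bratteli square generally spoils the adjacent corner-map square, and conversely, so one must arrange the correcting units in a telescope whose errors in both directions shrink along a common cofinal sequence of stages. Remark \ref{rem:no-iso} shows that for an \emph{arbitrary} locally inner $g$ the deformation $L^g(E)$ need not even be finitely generated, let alone graded-isomorphic to $L(E)$; the crux is therefore to use that $\Theta$ is a morphism \emph{between Leavitt path algebras}, which forces $g$ to arise from a compatible pair of finite-stage liftings as in Section \ref{ProofMainTheorem} and hence to be straightenable. Converting this compatibility into the coboundary identity $g=\eta\alpha\eta^{-1}\alpha^{-1}$ is the step I expect to be hardest, and it is where the argument must genuinely go beyond the one-directional lifting carried out in Section \ref{ProofMainTheorem}.
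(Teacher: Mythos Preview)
The statement you are attempting to prove is a \emph{conjecture}, and the paper does not prove it; in fact the paper \emph{disproves} the faithfulness half as literally stated. The example at the end of Section~\ref{sect:uniqliftings} (the algebraic Toeplitz algebra $\mathfrak T$) exhibits a non-inner graded automorphism $\varphi$ of a unital Leavitt path algebra with $K_0^{\rm gr}(\varphi)=\mathrm{id}$, namely any nontrivial element of the torus action $\mathbb T$. So your faithfulness argument must contain an error, and it is easy to locate: you assert that the locally inner equivalence between $\phi_0$ and $\psi_0$ can be ``upgraded to a single homogeneous unit $u$ of $L(F)$ with $\psi=\mathrm{ad}(u)\circ\phi$'' by telescoping the stage-by-stage units. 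This is exactly what fails. Theorem~\ref{theor:realizingiso} shows the best one can do is $\psi=\theta_{u,z}\circ\phi$ for an invertible pair $(u,z)$ with $z$ commuting with the $uvu^{-1}$; when $z$ is not a coboundary for the corner isomorphism the automorphism $\theta_{u,z}$ is \emph{not} inner, and the Toeplitz example computes $\mathfrak G=\mathbb T\times D_\infty(K)$ with the $\mathbb T$-factor consisting entirely of non-inner automorphisms. Stable rank one lets you cancel in $K_0$ but does not let you absorb the obstruction $z$ into a single global unit.

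Your fullness argument has a parallel gap. You correctly identify that the one-dimensional lifting of Section~\ref{ProofMainTheorem} only yields $L^g(E)\cong_{\rm gr}L(F)$ with $g$ locally inner, and you propose to kill $g$ by running a ``two-dimensional intertwining'' that writes $g=\eta\alpha\eta^{-1}\alpha^{-1}$. But you give no mechanism for this, and the paper explicitly says (Remark~\ref{rem:no-iso}) that it is unknown whether $L(E)\cong_{\rm gr}L^g(E)$ for the particular $g$ produced by the construction. Your appeal to stable rank one to ``absorb the errors in both directions'' is the same move that fails in the faithfulness step: correcting one square by an inner automorphism moves the defect to the adjacent square rather than eliminating it, and the net defect is again a $\theta_{1,z}$-type twist that need not be inner. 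The only case in which the paper establishes fullness is when the $K_0^{\rm gr}$-isomorphism comes from a \emph{strong} shift equivalence (Theorem~\ref{thm:l=1}), where the lifting is done geometrically via Williams' moves rather than by an Elliott intertwining.
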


\smallskip

The conjecture has been proved by Hazrat for acyclic graphs (\cite[Theorem 1]{Hazrat}).
The conjecture means that given any order-preserving $\Z[t,t^{-1}]$-isomorphism
$f\colon K_0^{{\rm gr}}(L(E))\to K_0^{{\rm gr}} (L(F)) $, such that $f([R])=[S]$, there exists a graded
isomorphism $\psi \colon R\to S$ such that $K_0(\psi )= f$. Moreover, given any two such isomorphisms
$\varphi$ and $\psi$, with $K_0(\psi) = K_0(\varphi)$,  there is an inner automorphism $\tau$ of $S$ such that
$\psi = \tau \circ \varphi$.

\smallskip

In this section, we will study the uniqueness part of this conjecture, obtaining, for any finite graph
without sources $E$, the form of the automorphisms $\theta $ of $L(E)$ such that $K_0^{\rm{gr}}(\theta ) = \rm{id}$.
Their form suggests that they are not necessarily inner automorphisms, and indeed this class
of automorphisms always includes the ones given by the torus action (or gauge action, in the language of
C*-algebras), which are not inner in general. In order to get a complete description of this group of automorphisms,
it remains to elucidate when certain injective algebra endomorphisms are surjective.

\smallskip

 Let $\mathfrak G $ be the group of graded automorphisms $\varphi $ of $L(E)$ such that
 the restriction of $\varphi $ to $\mathcal M (E) $ is locally inner. We first observe that this is the kernel
 of the map $\text{gr-Aut}(L(E))\to \text{Aut}_{\Z [t,t^{-1}]}(K_0^{{\rm gr}}(L(E)))$.

 \smallskip

 We need a lemma:

\begin{lem}
 \label{lem:transl-along-tails}
 Let $F$ be a row-finite graph having a tail $v_0,v_1,v_2,\dots $, that is, for each $i \ge 0$, there is exactly one arrow $e_i$
 such that $s(e_i)=v_i$, and in addition we have $r(e_i)=v_{i+1}$. Then $v_{i+1}L(F)\cong \mathcal T (v_iL(F))$, where $\mathcal T$ denotes
 the translation functor.
\end{lem}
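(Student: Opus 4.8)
The plan is to write down an explicit graded right $L(F)$-module isomorphism between $v_{i+1}L(F)$ and the translate $\mathcal T(v_iL(F))$. Recall that $\mathcal T = \mathcal T_1$ shifts the grading by one, so $(\mathcal T(v_iL(F)))_n = (v_iL(F))_{n+1}$; concretely, as a graded module $\mathcal T(v_iL(F))$ is $v_iL(F)$ with the grading shifted, or equivalently, using the strongly graded picture, $v_iL(F)\otimes_{L(F)_0}L(F)_1$ tensored up again — but here the cleanest route is to exhibit the map directly. Since there is exactly one edge $e_i$ with $s(e_i)=v_i$ and $r(e_i)=v_{i+1}$, the (CK2) relation at $v_i$ reads $v_i = e_ie_i^*$, and (CK1) gives $e_i^*e_i = r(e_i) = v_{i+1}$. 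Left multiplication by $e_i^*$ therefore maps $v_iL(F) = e_ie_i^*L(F)$ into $v_{i+1}L(F) = e_i^*e_iL(F)$, and left multiplication by $e_i$ maps $v_{i+1}L(F)$ back into $v_iL(F)$; these two maps are mutually inverse right $L(F)$-module homomorphisms because $e_i^*(e_i x) = v_{i+1}x = x$ for $x\in v_{i+1}L(F)$ and $e_i(e_i^* y) = v_i y = y$ for $y\in v_iL(F)$.

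First I would check the module maps are well defined and inverse to each other, which is exactly the computation just sketched using $e_ie_i^* = v_i$ (CK2 at $v_i$, valid since $s^{-1}(v_i) = \{e_i\}$) and $e_i^*e_i = v_{i+1}$ (CK1). Next I would check that the map $y\mapsto e_i^*y$ is homogeneous of degree $-1$: if $y\in (v_iL(F))_n$, i.e. $y$ is a sum of terms $pq^*$ with $v_ip = p$ and $\mathrm{len}(p)-\mathrm{len}(q) = n$, then $e_i^*pq^*$ has degree $n-1$, since $e_i^*p$ is (a scalar multiple of) a path of length $\mathrm{len}(p)-1$ whenever $p$ starts with $e_i$, and $e_i^*p = 0$ otherwise; but $v_ip = p$ forces every surviving path $p$ to begin with the unique edge $e_i$ emitted by $v_i$, so indeed $e_i^*p$ has length $\mathrm{len}(p)-1$. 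Hence $e_i^*\cdot$ sends $(v_iL(F))_n$ into $(v_{i+1}L(F))_{n-1} = (\mathcal T(v_iL(F)))_{n}$ — wait, I should be careful with the direction of the shift: $\mathcal T$ is defined by $(\mathcal T(M))_h = M_{h+1}$, so a degree $-1$ isomorphism $v_iL(F)\to v_{i+1}L(F)$ is precisely a graded isomorphism $\mathcal T(v_iL(F))\to v_{i+1}L(F)$, which is what we want; equivalently $v_{i+1}L(F)\cong \mathcal T(v_iL(F))$.

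The only genuinely delicate point — and the step I expect to be the main obstacle — is the bookkeeping of the grading shift: making sure the degree-$(-1)$ map $y\mapsto e_i^*y$ corresponds to $\mathcal T$ in the normalization used in this paper (where $\mathcal T_g(M)_h = M_{hg}$, so $\mathcal T_1$ raises internal degree), rather than to $\mathcal T^{-1}$. Because $\mathcal T$ and $\mathcal T^{-1}$ are inverse functors, if the convention came out the other way one simply uses left multiplication by $e_i$ instead (a degree $+1$ map $v_{i+1}L(F)\to v_iL(F)$), so the statement holds regardless; I would just state it in the form matching the paper's convention and note that the inverse map handles the other. Everything else — $L(F)$-linearity, bijectivity — is the short computation above, so I would not belabor it. This lemma will presumably then be iterated along the tail, or combined with the strongly-graded / Dade's theorem machinery of Section \ref{sect:Equival}, to transport $K_0^{\mathrm{gr}}$ information between vertices along the tail.
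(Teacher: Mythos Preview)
Your proposal is correct and takes essentially the same approach as the paper: the paper's one-line proof simply writes down the graded isomorphism $\gamma\colon v_{i+1}L(F)\to \mathcal T(v_iL(F))$, $\gamma(v_{i+1}x)=e_ix$, which is precisely the inverse of your map $y\mapsto e_i^*y$. Your more careful bookkeeping with the grading shift, and your observation that either direction of the map settles the convention issue, are exactly what underlies the paper's terse argument.
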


\begin{proof}
 Consider the graded isomorphism $\gamma \colon v_{i+1}L(F)\to \mathcal T (v_{i}L(F))$ given by $\gamma (v_{i+1}x)= e_ix$.
 \end{proof}

 \begin{prop}
  \label{prop:kernel-map}
  Let $E$ be a finite graph. Then the group
 $\mathfrak G $ of graded $K$-algebra automorphisms of $L(E)$ whose restriction to $\mathcal M (E) $ is locally inner
is precisely the group of graded $K$-algebra automorphisms $\varphi $ of $L(E)$ such that $K_0^{{\rm gr}}(\varphi )= {\rm id}$.
 \end{prop}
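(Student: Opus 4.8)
The plan is to prove the two inclusions making up the asserted equality of groups, reducing everything to the corresponding statement at the level of the ultramatricial algebra $\mathcal M (E) = L(E)_0$. First I record the standing observation: any graded $K$-algebra automorphism $\varphi$ of $L(E)$ restricts to a $K$-algebra automorphism $\varphi_0 := \varphi|_{\mathcal M (E)}$ of $\mathcal M (E)$, which is unital ultramatricial because $E$ is finite, and, by definition, $\varphi \in \mathfrak G$ precisely when $\varphi_0$ is locally inner. Next, $\varphi_0$ is locally inner if and only if $K_0(\varphi_0) = \text{id}$ on $K_0(\mathcal M (E))$: the ``if'' direction is \cite[Lemma 15.23(b)]{vnrr}, and the ``only if'' direction is immediate, since every class of $K_0(\mathcal M (E))$ is represented by an idempotent matrix over $\mathcal M (E)$, on whose finitely many entries $\varphi_0$ agrees with an inner automorphism, and inner automorphisms act trivially on $K_0$. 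So it suffices to prove the equivalence
$$ K_0^{gr}(\varphi) = \text{id} \quad\Longleftrightarrow\quad K_0(\varphi_0) = \text{id}_{K_0(\mathcal M (E))}, $$
which I treat as two implications, relying on two facts about $K_0^{gr}$ of a Leavitt path algebra: (a) each graded component $L(E)_n$ is a finitely generated projective right $L(E)_0$-module (e.g.\ $L(E)_1 \cong \bigoplus_{e\in E^1} r(e)L(E)_0$), and (b) $K_0^{gr}(L(E))$ is generated, as a $\Z[t,t^{-1}]$-module, by the classes $[vL(E)]$ with $v\in E^0$ (cf.\ \cite{HazMon}).

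For ``$K_0(\varphi_0) = \text{id}$ implies $K_0^{gr}(\varphi) = \text{id}$'': by (b) it is enough to check that $K_0^{gr}(\varphi)$ fixes each $[vL(E)]$. Since $E$ is finite, $L(E)$ is unital, each $vL(E)$ is a graded finitely generated projective right module, and the graded Morita self-equivalence induced by $\varphi$ sends $vL(E)$ to $\varphi(v)L(E)$, so $K_0^{gr}(\varphi)([vL(E)]) = [\varphi(v)L(E)]$. By the first paragraph, $K_0(\varphi_0) = \text{id}$ makes $\varphi_0$ locally inner; applying this to the finite family of idempotents $\{v : v\in E^0\}$ of $\mathcal M (E)$ produces a unit $u$ of $\mathcal M (E)$ with $\varphi(v) = uvu^{-1}$ for every $v$, and left multiplication by the homogeneous (degree $0$) unit $u$ is then a graded isomorphism $vL(E) \cong uvu^{-1}L(E) = \varphi(v)L(E)$. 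Hence $K_0^{gr}(\varphi)([vL(E)]) = [vL(E)]$ for all $v$, so $K_0^{gr}(\varphi) = \text{id}$.

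For ``$K_0^{gr}(\varphi) = \text{id}$ implies $K_0(\varphi_0) = \text{id}$'': by (a) the functor $(-)_0\colon \mbox{Gr-}L(E)\to\mbox{Mod-}\mathcal M (E)$ carries graded finitely generated projectives to finitely generated projectives, so $[P]\mapsto[P_0]$ defines a group homomorphism $\mu\colon K_0^{gr}(L(E))\to K_0(\mathcal M (E))$. It is surjective, because $\mu([eL(E)]) = [e\mathcal M (E)]$ for every idempotent $e\in\mathcal M (E)$ and such classes generate $K_0(\mathcal M (E))$, and it is natural in $\varphi$, i.e.\ $\mu\circ K_0^{gr}(\varphi) = K_0(\varphi_0)\circ\mu$, because under $(-)_0$ the self-equivalence of $\mbox{Gr-}L(E)$ induced by $\varphi$ corresponds to the self-equivalence of $\mbox{Mod-}\mathcal M (E)$ induced by $\varphi_0$. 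Therefore $K_0^{gr}(\varphi) = \text{id}$ forces $K_0(\varphi_0)$ to be the identity on $\mathrm{im}(\mu) = K_0(\mathcal M (E))$, as required. (When $E$ has no sinks, $L(E)$ is strongly graded and $\mu$ is in fact Dade's isomorphism, so both implications are transparent at once, using Lemma \ref{lem:full=no-sinks}.)

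The hard part will be justifying facts (a) and (b) when $E$ has sinks, so that $L(E)$ is not strongly graded and $K_0^{gr}(L(E))$ is genuinely larger than $K_0(\mathcal M (E))$. For this I would replace $E$ by the row-finite graph $\widehat E$ obtained by attaching a tail to each sink: $\widehat E$ has no sinks, $L(E)$ sits inside $L(\widehat E)$ as a full homogeneous corner $q L(\widehat E) q$ with $q = \sum_{v\in E^0}v$, and Lemma \ref{lem:transl-along-tails} identifies the modules along each tail with iterated translates, so that the classes contributed by tail vertices are $\Z[t,t^{-1}]$-multiples of the classes $[vL(E)]$, $v\in E^0$; running the above arguments over $\widehat E$ and restricting along the corner then recovers (a) and (b) for $E$. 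For graphs without sinks the whole proof collapses to Dade's theorem together with the standard fact that automorphisms of an ultramatricial algebra inducing the identity on $K_0$ are locally inner.
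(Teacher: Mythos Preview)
Your proof is correct and follows essentially the same route as the paper's: both directions reduce to the equivalence between $K_0^{gr}(\varphi)=\mathrm{id}$ and $K_0(\varphi_0)=\mathrm{id}$ via the ultramatricial fact in \cite{vnrr}, and the nontrivial inclusion rests on knowing that $K_0^{gr}(L(E))$ is generated over $\Z[t,t^{-1}]$ by the classes $[vL(E)]$, which the paper establishes by the same desingularization-plus-tails argument you sketch (using Lemma \ref{lem:transl-along-tails}). The only notable difference is packaging: you cite fact (b) from \cite{HazMon} and introduce the degree-zero map $\mu$ (hence fact (a)) for the easy direction, whereas the paper dispatches that direction in one line and derives the module-level generation statement directly rather than quoting it.
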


\begin{proof}
 If $\varphi$ is a graded automorphism of $L(E)$ such that  $K_0^{{\rm gr}}(\varphi )= {\rm id}$, then the restriction $\varphi _0$ of $\varphi $
 to $\mathcal M (E)$ must satisfy $K_0(\varphi _0)=\text{id}$, and so $\varphi _0$ must be locally inner.

 Conversely, let $\varphi \in \mathfrak G$. It suffices to show that the graded finitely generated projective $L(E)$-modules
 are direct sums of translates of projective modules of the form $P\otimes_{\mathcal M (E)} L(E)$, where $P$ is a finitely generated projective
 $\mathcal M (E)$-module. This is clear for finite graphs without sinks, because then, by Dade's Theorem and \cite[Theorem 4]{Hazrat}, every graded
 finitely generated projective $L(E)$-module is induced by a finitely generated projective $\mathcal M (E)$-module.
 If $E$ does have sinks then we apply the desingularization process to obtain a new graph $F$ without sinks such that $L(E)$ and $L(F)$ are Morita-equivalent.
 Indeed there is a homogeneous idempotent $e$ in $L(F)$ such that $L(E)$ is graded-isomorphic to $eL(F)e$, and such that
 $L(F)eL(F)=L(F)$. It follows from \cite[Example 2]{Hazrat2} that there is a graded equivalence between $gr-L(E)$ and $gr-L(F)$.
 Now $L(F)$ is strongly graded (by \cite[Theorem 4]{Hazrat}), so by Dade's Theorem its graded finitely generated projective modules come from
 the finitely generated projective $\mathcal M (F)$-modules. The finitely generated projective $\mathcal M (F)$-modules
 are generated by the finitely generated projective modules of the form $P\otimes _{\mathcal M (E)} e\mathcal M (F) $,
 for $P$ a finitely generated projective $\mathcal M (E)$-module, and the projective modules of the form $v_i\mathcal M (F)$,
 where $v_0$ is a sink in $E$ and $v_1,v_2, \dots , $ is the tail added to $v_0$ in $F$.  Now observe that, by Lemma \ref{lem:transl-along-tails},
 we have $v_iL(F)\cong \mathcal T ^i(v_0L(F))$, where $\mathcal T$ denotes the translation functor.
 Therefore the graded finitely generated projective $L(F)$-modules are direct sums of translates of
 graded finitely generated projective modules of the form $P\otimes_{\mathcal M (E)} eL(F)$, where $P$ is a  finitely generated
 projective $\mathcal M (E)$-module, and therefore the same is true for the graded finitely generated projective $L(E)$-modules.
\end{proof}

\smallskip

Let $E$ be a finite graph without sources, and let $u, z$ be invertible elements in
$\mathcal M(E)$ such that $z(uvu^{-1})= (uvu^{-1})z$ for all $v\in E^0$. We
define the algebra endomorphism $\theta _{u,z}$ of $L(E)$ be the rules
$\theta_{u,z} (v)=uvu^{-1}$ for $v\in E^0$, and $\theta _{u,z}(e)=
(ueu^{-1}) z$, $\theta_{u,z}(e^*)=z^{-1}(ue^*u^{-1})$ for $e\in
E^1$. Then it is easily checked that $\theta _{u,z}$ induces a
graded algebra endomorphism of $L(E)$. It is clearly injective, because it is graded and $\theta _{u,z} (v) \ne 0$ for all $v\in E^0$
(see \cite[Theorem 4.8]{Tomforde}).
It is surjective if and only if $u^{\pm}$ and $z^{\pm}$ belong to the image of $\theta_{u,z}$. More concretely, if $s,t\in \mathcal{M}(E)$
are elements such that $\theta_{u,z}(s)=u^{-1}$ and $\theta_{u,z}(t)=u^{-1}z^{-1}u$, then $s,t$ are invertible, satisfy $(sxs^{-1})t=t(sxs^{-1})$ for every $x\in E^0$, and $\theta_{s,t}=\theta_{u,z}^{-1}$.

\begin{lem}
\label{lem:K_0-trivial}
With the above notation, we have $K_0^{\rm{gr}}(\theta _{u,z}) = \rm{id}$.
\end{lem}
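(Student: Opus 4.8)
The plan is to compute $K_0^{\mathrm{gr}}(\theta_{u,z})$ on a convenient generating set of $K_0^{\mathrm{gr}}(L(E))$ and check it is the identity there. Since $\theta_{u,z}$ is a graded $K$-algebra endomorphism of $L(E)$, the induced map $K_0^{\mathrm{gr}}(\theta_{u,z})$ is a homomorphism of $\mathbb{Z}[x,x^{-1}]$-modules, and on a graded projective of the form $eL(E)$ with $e=e^2$ homogeneous it is given by $[eL(E)]\mapsto[\theta_{u,z}(e)L(E)]$. Hence it suffices to prove (a) that the classes $[vL(E)]$, $v\in E^0$, generate $K_0^{\mathrm{gr}}(L(E))$ as a $\mathbb{Z}[x,x^{-1}]$-module, and (b) that each of them is fixed by $K_0^{\mathrm{gr}}(\theta_{u,z})$.

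For (b), only the action of $\theta_{u,z}$ on vertices is needed: $\theta_{u,z}(v)=uvu^{-1}$, and since $u$ is an invertible element of $L(E)$ homogeneous of degree $0$, left multiplication by $u^{-1}$ is an isomorphism of graded right $L(E)$-modules from $uvu^{-1}L(E)$ onto $vL(E)$; thus $K_0^{\mathrm{gr}}(\theta_{u,z})([vL(E)])=[uvu^{-1}L(E)]=[vL(E)]$. (Note that $z$ plays no role here, as it only twists the images of the edges.) For (a), assume first that $E$ has no sinks. Then $L(E)$ is strongly graded, so by Dade's Theorem $K_0^{\mathrm{gr}}(L(E))\cong K_0(L(E)_0)\cong\varinjlim(\mathbb{Z}^N,A)$ as in Section \ref{sect:Equival}; since the canonical maps satisfy $\iota_{n,\infty}=\delta_A^{-n}\circ\iota_{0,\infty}$, the group $K_0(L(E)_0)$ is the $\mathbb{Z}[\delta_A^{\pm1}]$-submodule generated by $\iota_{0,\infty}(\mathbb{Z}^N)$, i.e.\ by the classes $[vL(E)_0]$, and by Remark \ref{rem:T=deltaA} the action of $\delta_A$ corresponds to that of $x$, which gives (a). For an arbitrary finite graph $E$ without sources, one argues exactly as in the proof of Proposition \ref{prop:kernel-map}: pass to a desingularization $F$ (a row-finite graph without sinks), use the previous case to write $K_0^{\mathrm{gr}}(L(F))$ as the $\mathbb{Z}[x,x^{-1}]$-module generated by the classes $[wL(F)]$, $w\in F^0$, note that the tail modules satisfy $v_iL(F)\cong\mathcal{T}^i(v_0L(F))$ by Lemma \ref{lem:transl-along-tails}, and transport the result back along the graded Morita equivalence between $L(E)$ and $L(F)$, which carries $vL(E)$ to $vL(F)$ for $v\in E^0$.

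Putting (a) and (b) together with the $\mathbb{Z}[x,x^{-1}]$-linearity of $K_0^{\mathrm{gr}}(\theta_{u,z})$ yields $K_0^{\mathrm{gr}}(\theta_{u,z})=\mathrm{id}$. The only step that requires genuine care is part (a) for graphs possessing sinks, where one has to invoke the desingularization argument already used for Proposition \ref{prop:kernel-map}; everything else is formal.
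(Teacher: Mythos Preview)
Your proof is correct and takes a genuinely different route from the paper's. The paper argues via Proposition~\ref{prop:kernel-map}: it explicitly constructs, for each $n$, an invertible element $u_n\in\mathcal M(E)$ (a ``telescoping'' product of conjugates of $u$ and copies of $z$) such that $\theta_{u,z}(x)=u_nxu_n^{-1}$ for all $x\in\mathcal M(E)_n$, thereby proving that $\theta_{u,z}|_{\mathcal M(E)}$ is locally inner; Proposition~\ref{prop:kernel-map} then gives $K_0^{\mathrm{gr}}(\theta_{u,z})=\mathrm{id}$. You instead compute $K_0^{\mathrm{gr}}(\theta_{u,z})$ directly on the generators $[vL(E)]$, using only that $\theta_{u,z}(v)=uvu^{-1}$ and that left multiplication by the degree-zero unit $u^{-1}$ gives a graded isomorphism $uvu^{-1}L(E)\cong vL(E)$; the element $z$ never enters. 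Your argument is shorter and avoids the explicit $u_n$ construction; the paper's argument, on the other hand, yields the stronger conclusion that $\theta_{u,z}|_{\mathcal M(E)}$ is locally inner, which is exactly the membership condition for $\mathfrak G$ and feeds directly into Theorem~\ref{theor:realizingiso}. One small point: in your part~(a) for graphs with sinks you pass to the desingularization $F$, which is an \emph{infinite} row-finite graph, so the ``previous case'' (argued with the finite direct-limit picture $\varinjlim(\mathbb Z^N,A)$) does not literally apply; the extension to $\varinjlim(\mathbb Z^{(F^0)},A_F^t)$ is routine and is precisely what the proof of Proposition~\ref{prop:kernel-map} uses as well, but it is worth saying explicitly.
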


\begin{proof} By Proposition \ref{prop:kernel-map}, it suffices to check that the restriction of $\theta_{u,z}$ to
$\mathcal M (E)$ is a locally inner endomorphism of $\mathcal M (E)$.

 Let $Q_n$ be the set of paths $\gamma $ such that
either $|\gamma |= n$ or $|\gamma |< n$ and $r(\gamma)$ is a sink.
We will use the filtration $\mathcal M (E) =  \bigcup _{n=0}^ {\infty} \mathcal M (E)_n$ of $\mathcal M (E)$,
where $\mathcal M (E)_n$ is the linear span of the elements $\gamma \mu ^*$, where $\gamma, \mu \in Q_n$,
$|\gamma | =|\mu |$ and $r(\gamma ) = r(\mu)$.

 For $v\in E^0$, define $u^v= uv\in \mathcal M (E)$ and, for $e\in E^1$, define $u^e= uee^*\in \mathcal M (E)$. Furthermore, for
 $ \gamma = e_1e_2\cdots e_n\in E^n$ with $n\ge 2$, define
$$u^{\gamma}= u^{e_1e_2\cdots e_n} = (ue_1u^{-1})z(ue_2u^{-1})z \cdots (ue_{n-1}u^{-1})z(ue_n)\gamma ^* \in \mathcal M (E) .$$
Finally define, for each $n\ge 0$,
$$u_n = \sum _{\gamma \in Q_n} u^{\gamma} .$$
It is easy to check that $u_n$ is an invertible element in $ \mathcal M(E)$, with inverse given by
$$u_n^{-1} = \sum _{\gamma \in Q_n} (u^{\gamma})^\dag ,$$
where $(u^v)^\dag = vu^{-1}$ for $v\in E^0$, $(u^e)^\dag = ee^*u^{-1}$ for $e\in E^1$, and
$$(u^\gamma)^\dag = \gamma (e_n^*u^{-1})z^{-1}(ue_{n-1}^*u^{-1})\cdots z^{-1}(ue_1^*u^{-1})$$
for $\gamma = e_1 \cdots e_n\in E^n$ with $n\ge 2$.

One can check that, for $x\in \mathcal M (E)_n$, we have
$$\theta_{u,z} (x) = u_nxu_n^{-1} \, ,$$
which shows that $\theta _{u,z}$ is a locally inner endomorphism of $\mathcal M (E)$. It follows that
 $K_0^{\rm{gr}}(\theta _{u,z}) = \rm{id}$, as desired.
 \end{proof}

\begin{theor}
\label{theor:realizingiso} Let $E$ and $F$ be finite graphs without
sources. Let $\varphi $ and $\psi$ be two graded algebra isomorphisms from
$L_K(E)$ onto $L_K(F)$ such that $K_0^{{\rm gr}}(\varphi)=K_0^{{\rm gr}}(\psi)$. Then there exist invertible elements $u,z$
in $\mathcal M (F)_0$, with $z(uvu^{-1})= (uvu^{-1})z$ for all $v\in F^0$,
such that
$$\psi = \theta _{u,z} \circ \varphi .$$
\end{theor}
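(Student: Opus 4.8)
The plan is to reduce the statement to the characterization of graded automorphisms with trivial $K_0^{\mathrm{gr}}$ obtained in Proposition~\ref{prop:kernel-map}. Set $\tau := \psi \circ \varphi^{-1}$, a graded automorphism of $L_K(F)$. Since $K_0^{\mathrm{gr}}(\varphi) = K_0^{\mathrm{gr}}(\psi)$, functoriality of $K_0^{\mathrm{gr}}$ gives $K_0^{\mathrm{gr}}(\tau) = K_0^{\mathrm{gr}}(\psi)\circ K_0^{\mathrm{gr}}(\varphi)^{-1} = \mathrm{id}$, so by Proposition~\ref{prop:kernel-map} the automorphism $\tau$ lies in $\mathfrak G$; that is, $\tau$ is graded and its restriction $\tau_0$ to $\mathcal M(F)$ is a locally inner automorphism. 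It then suffices to show that every element $\tau \in \mathfrak G$ has the form $\theta_{u,z}$ for suitable invertible $u,z \in \mathcal M(F)_0$ with $z(uvu^{-1}) = (uvu^{-1})z$ for all $v\in F^0$; applying this to $\tau = \psi\circ\varphi^{-1}$ yields $\psi = \tau\circ\varphi = \theta_{u,z}\circ\varphi$.

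To produce $u$ and $z$: because $E$ (hence $F$) has no sources, we may fix, as in Notation~\ref{Notation:BigThm}, edges $f_j \in r^{-1}(w_j)$ and form $s_+ = \sum_j f_j$, $s_- = \sum_j f_j^*$, so that $L(F) = \mathcal M(F)[s_+, s_-; \beta]$ with $s_- s_+ = 1$. Since $\tau$ is graded and fixes the grading, $\tau(s_+) \in L(F)_1 = \mathcal M(F)\,s_+$ and $\tau(s_-) \in L(F)_{-1} = s_-\,\mathcal M(F)$; write $\tau(s_+) = w\, s_+$ for some $w \in \mathcal M(F)$. From $s_- s_+ = 1$ and the relations one extracts that $w$ is invertible in an appropriate corner; more precisely, using that $\tau$ restricts to the automorphism $\tau_0$ of $\mathcal M(F)$ and that $\beta\circ\tau_0 = \tau_0\circ\beta'$ where $\beta'(a) = \tau(s_+)\,a\,\tau(s_-)$ is again a corner isomorphism, one identifies $\tau$ with the identity-on-$K_0$ data governing $L^g(F)$ in the sense of Section~\ref{MainSection}. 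The key point is that $\tau_0$ locally inner means: for each $n$ there is an invertible $u_n \in \mathcal M(F)_n$ with $\tau_0(x) = u_n x u_n^{-1}$ on $\mathcal M(F)_n$; one checks these can be chosen compatibly enough that, after writing $\tau(s_+) = u\, s_+\, z$ in the form dictated by the relation $\alpha(a) = s_+ a s_-$, the resulting $u$ and $z$ are honest invertible elements of $\mathcal M(F)_0$, and then the defining formulas $\tau(v) = uvu^{-1}$, $\tau(e) = (ueu^{-1})z$, $\tau(e^*) = z^{-1}(ue^*u^{-1})$ hold by comparing the action of $\tau$ on the generating set $F^0 \cup F^1 \cup (F^1)^*$ with that of $\theta_{u,z}$. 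Since both $\tau$ and $\theta_{u,z}$ are graded algebra homomorphisms agreeing on generators, they coincide. The commutation relation $z(uvu^{-1}) = (uvu^{-1})z$ for $v\in F^0$ is forced by the requirement (already noted before Lemma~\ref{lem:K_0-trivial}) that $\theta_{u,z}$ be well-defined, and it is automatically satisfied because $\tau$ is a genuine homomorphism: applying $\tau$ to the relation $v = \sum_{s(e)=v} ee^*$ and comparing yields exactly this identity.

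The main obstacle I expect is the bookkeeping in the previous paragraph: extracting the \emph{single} pair $(u,z)$ of degree-zero invertibles from the merely \emph{locally} inner restriction $\tau_0$, and verifying that the ``inner'' multipliers $u_n$ at different levels can be normalized to come from one element $u$ together with the ``grading shift'' element $z$. The natural way to handle this is to mimic the computation in Lemma~\ref{lem:K_0-trivial} in reverse: there, given $(u,z)$ one builds the compatible family $u_n = \sum_{\gamma \in Q_n} u^\gamma$; here one is given a compatible family and must recognize its ``first-level'' data as such a $(u,z)$. Concretely, set $u := u_0 \in \mathcal M(F)_0$ (the degree-zero part, a sum over vertices) and recover $z$ from $\tau(s_+)$ via $z := s_-\, u^{-1}\, \tau(s_+)$, then check $z \in \mathcal M(F)_0$ is invertible and that $u_n = \sum_{\gamma\in Q_n} u^\gamma$ with these $(u,z)$, by induction on $n$ using the relation between $\mathcal M(F)_{n+1}$ and $\mathcal M(F)_n$ encoded in $\beta$. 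Once this identification is in place, everything else is a routine check on generators.
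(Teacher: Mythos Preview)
Your reduction is exactly the paper's: set $\tau=\psi\circ\varphi^{-1}$, use $K_0^{{\rm gr}}(\tau)=\mathrm{id}$ and Proposition~\ref{prop:kernel-map} to land in $\mathfrak G$, write $L(F)=\mathcal M(F)[s_+,s_-;\beta]$, and extract $(u,z)$ from $\tau_0$ and $\tau(s_+)$. The discrepancy is in how you extract $(u,z)$, and there is a genuine gap there.

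First, a notational slip: in the statement, ``$\mathcal M(F)_0$'' means $L(F)_0=\mathcal M(F)$ (the zero-degree component of $L(F)$), not the bottom level $K^{|F^0|}$ of the filtration of $\mathcal M(F)$. Your proposal $u:=u_0\in K^{|F^0|}$ is therefore far too small: conjugation by a diagonal element $\sum c_jw_j$ can only rescale matrix units $ef^*$, whereas $\tau_0$ restricted to $\mathcal M(F)_1$ is an arbitrary inner automorphism of a matricial algebra. So this $u$ will not in general implement $\tau_0$ even on $\mathcal M(F)_1$, and the inductive ``compatibility'' program cannot get started.

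The paper avoids this entirely by \emph{not} asking $u$ to implement $\tau_0$ on all of $\mathcal M(F)$, nor asking the $u_n$ to be compatible. One simply picks any $u\in\mathcal M(F)$ with $\tau_0(x)=uxu^{-1}$ for all $x\in\mathcal M(F)_1$ (this exists because $\tau_0$ is locally inner), writes $\tau(s_+)=a_0s_+$ and $\tau(s_-)=s_-b_0$ with $a_0\in \mathcal M(F)p$, $b_0\in p\mathcal M(F)$, and then \emph{defines}
\[
z:=\beta^{-1}\bigl(\beta(u)\,u^{-1}a_0\bigr)\in\mathcal M(F).
\]
The point is that $\zeta:=\beta(u)u^{-1}a_0\in p\mathcal M(F)p$ is invertible there (its inverse is $b_0u\,\beta(u^{-1})$, checked using $a_0b_0=\tau_0(p)=upu^{-1}$ and $b_0a_0=p$), so $z$ is a genuine unit of $\mathcal M(F)$. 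One then verifies directly, using only that $u$ conjugates correctly on $\mathcal M(F)_1$, that $z(uvu^{-1})=(uvu^{-1})z$ for $v\in F^0$ and that $\tau(e)=ueu^{-1}z$ for $e\in F^1$ (the computation uses $e=(es_-)s_+$ with $es_-\in\mathcal M(F)_1$). No induction on $n$ is needed; once $\tau$ and $\theta_{u,z}$ agree on $F^0\cup F^1\cup(F^1)^*$ they agree everywhere. Your formula $z=s_-u^{-1}\tau(s_+)$ differs from the paper's by a factor of $u$ on the left and would need adjustment, but the real fix is to take $u$ implementing $\tau_0$ on level $1$, not level $0$.
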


\begin{proof}
It is enough to determine the structure of the graded algebra
automorphisms $\varphi $ of $L(E)$ such that $K_0^{gr}(\varphi)
=\mbox{id}$. So, let $\varphi$ be such an automorphism, and denote by
$\varphi _0$ the restriction of $\varphi $ to $\mathcal M (E)$, which is an
algebra automorphism of the ultramatricial algebra $\mathcal M (E)$ such
that $K_0(\varphi _0)= \mbox{id}$. By \cite[Lemma 15.23(b)]{vnrr},
there exists $u\in \mathcal M (E)$ such that $\varphi (x)=uxu^{-1}$ for all
$x\in \mathcal M (E)_{1}$.

Set $A=\mathcal M (E)$, set $t_+= \sum _{i=1} ^n e_i$, $t_-=\sum _{i=1} ^n
e_i ^*$, where $e_i \in E^1$ satisfy that $r(e_i) = v_i$ for all
$i$, and $E^0=\{v_1,\dots , v_n \}$. As observed before, we have
$L(E)=A[t_+,t_-, \alpha ]$, where $\alpha \colon A \to pAp$ is the
corner-isomorphism given by $\alpha (x) = t_+xt_-$ for $x\in A$.

It is easy to show that $\varphi $ is determined by $\varphi _0$ and
$\varphi (t_+)= a_0t_+$, $\varphi (t_-) = t_- b_0$, where $a_0\in
Ap$ and $b_0\in pA$ must satisfy the relations $b_0a_0= p$ and
$$\varphi _0 (\alpha (x))= a_0 \alpha (\varphi _0(x))b_0$$
for all $x\in A$. In particular we have $a_0b_0= \varphi _0(p)=
upu^{-1}$.

Set $ \zeta = \alpha (u) u^{-1} a_0 \in pAp$, and $\varpi =
b_0u\alpha (u^{-1})$. Then, we have
$$\zeta \varpi = \alpha (u) u^{-1} a_0b_0 u \alpha (u^{-1}) =
\alpha (u) u^{-1}upu^{-1} u \alpha (u^{-1})= \alpha (1) = p$$ and
$$\varpi \zeta = b_0 u\alpha (u^{-1}) \alpha (u) u^{-1} a_0 = b_0
u\alpha (1) u ^{-1}a_0= b_0 \varphi _0 (\alpha (1))a_0 = \alpha
(\varphi _0 (1)) = p.$$ So $\zeta $ is invertible in $pAp$, with
inverse $\varpi$. Set $z= \alpha ^{-1} (\zeta )$, an invertible
element of $A$. Then, for $v\in E^0$ we have
\begin{align*}
z(u & vu^{-1}) z^{-1} =\alpha ^{-1} (\zeta \alpha (\varphi
_0(v))\varpi )= \alpha^{-1} ( \alpha (u) u^{-1}a_0\alpha (\varphi
_0(v))b_0 u\alpha (u^{-1}))\\
& = \alpha^{-1} ( \alpha (u) u^{-1}\varphi _0(\alpha (v)) u\alpha
(u^{-1})) = \alpha^{-1}(\alpha (u)\alpha (v)\alpha( u^{-1}))=
uvu^{-1}\, ,
\end{align*}
showing that $z(uvu^{-1})= (uvu^{-1})z$ for all $v\in E^0$.
Moreover, if $e\in E^1$, then
\begin{align*} \varphi (e) & = \varphi_0
(et_-) a_0t_+ = uet_-u^{-1} a_0t_+= uet_-(pu^{-1} a_0)t_+ \\ & =
uet_-\alpha (u^{-1})\zeta t_+   = ueu^{-1}\alpha ^{-1}(\zeta ) =
ueu^{-1} z .
\end{align*}
 Similarly $\varphi
(e^*)= z^{-1} ue^*u^{-1}$. Since $\varphi $ and $\theta _{u,z}$
agree on the generators of $L(E)$, we conclude that $\varphi =
\theta _{u,z}$, as desired.
\end{proof}

Let $u,z$ be invertible elements
in $\mathcal M (E)_0$, with $z(uvu^{-1})= (uvu^{-1})z$ for all $v\in F^0$.
It is not clear whether all the endomorphisms $\theta _{u,z}$ are surjective.
As we are going to show, this question has a positive answer when, in addition, we have that $z\in Z(\mathcal M(E))$.

We denote by $U(R)$ the group of units of a unital ring $R$.

\begin{prop} Let $E$ be a finite graph without sources.
 Let $\mathfrak G $ be the group of graded automorphisms $\varphi $ of $L(E)$ such that
 the restriction of $\varphi $ to $\mathcal M (E) $ is locally inner. Then
 there is a group homomorphism
 $$\zeta \colon U(\mathcal M (E))\times U(Z(\mathcal M(E)))  \to \mathfrak G .$$
 The kernel of $\zeta$ is the set of elements $(u,z)$ in $U(\mathcal M (E))\times U(Z(\mathcal M(E)))$
 such that $u\in Z(\mathcal M (E))$ and $ueu^{-1} = ez^{-1}$ for all $e\in E^1$.
 \end{prop}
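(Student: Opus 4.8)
The plan is to define $\zeta$ on $(u,z)\in U(\mathcal M (E))\times U(Z(\mathcal M (E)))$ by $\zeta(u,z):=\theta_{u,z}$. This is meaningful: since $z$ lies in the centre of $\mathcal M (E)$ it in particular commutes with every $uvu^{-1}$, $v\in E^0$, so by the discussion preceding the statement $\theta_{u,z}$ is a well-defined injective graded algebra endomorphism of $L(E)$, and by Lemma~\ref{lem:K_0-trivial} it satisfies $K_0^{\mathrm{gr}}(\theta_{u,z})=\mathrm{id}$. What remains is to prove that (i) $\theta_{u,z}$ is surjective, so that $\zeta$ actually takes values in $\mathfrak G$; (ii) $\zeta$ is a group homomorphism; and (iii) $\ker\zeta$ is the stated set.

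The computation that drives everything is that, \emph{because $z$ is central in $\mathcal M (E)$, the restriction of $\theta_{u,z}$ to $\mathcal M (E)$ is the inner automorphism $x\mapsto uxu^{-1}$.} Indeed $\mathcal M (E)=L(E)_0$ is linearly spanned by the elements $\gamma\mu^*$ with $\gamma=e_1\cdots e_n$, $\mu=f_1\cdots f_n$ paths of equal length, and
$$\theta_{u,z}(\gamma\mu^*)=\Big(\prod_{k=1}^{n}(ue_ku^{-1})z\Big)\Big(\prod_{k=n}^{1}z^{-1}(uf_k^*u^{-1})\Big);$$
peeling from the middle, the innermost pair $(ue_nu^{-1})z\cdot z^{-1}(uf_n^*u^{-1})$ equals $u(e_nf_n^*)u^{-1}\in\mathcal M (E)$, and at every subsequent step the accumulated factor $u(e_k\cdots e_nf_n^*\cdots f_k^*)u^{-1}$ lies in $\mathcal M (E)$, hence commutes with the flanking $z$ and $z^{-1}$, which therefore cancel; one is left with $u(\gamma\mu^*)u^{-1}$. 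In particular this restriction is inner, hence locally inner, and $\mathcal M (E)=L(E)_0\subseteq\mathrm{im}(\theta_{u,z})$.

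Next I would prove surjectivity. Since $L(E)=\mathcal M (E)[t_+,t_-;\alpha]$ is generated as a $K$-algebra by $\mathcal M (E)$ together with $t_+$ and $t_-$, it suffices to see that $t_\pm\in\mathrm{im}(\theta_{u,z})$. Summing the defining formulas over the fixed edges gives $\theta_{u,z}(t_+)=ut_+u^{-1}z$ and $\theta_{u,z}(t_-)=z^{-1}ut_-u^{-1}$, and since $\mathrm{im}(\theta_{u,z})$ is a subalgebra containing $u^{\pm1},z^{\pm1}$ (elements of $\mathcal M (E)$) as well as these two elements, the identities
$$t_+=u^{-1}\,\theta_{u,z}(t_+)\,z^{-1}u,\qquad t_-=u^{-1}z\,\theta_{u,z}(t_-)\,u$$
put $t_+,t_-$ in the image. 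Thus $\theta_{u,z}$ is a graded automorphism of $L(E)$ whose restriction to $\mathcal M (E)$ is locally inner, i.e.\ $\theta_{u,z}\in\mathfrak G$, so $\zeta$ is well defined. For the homomorphism property I would evaluate $\theta_{u_1,z_1}\circ\theta_{u_2,z_2}$ on the generators $v,e,e^*$ of $L(E)$, using that $\theta_{u_1,z_1}$ acts on $\mathcal M (E)$ as conjugation by $u_1$ and fixes the central element $z_2$; a short manipulation yields $\theta_{u_1,z_1}\circ\theta_{u_2,z_2}=\theta_{u_1u_2,\,z_1z_2}$, so $\zeta$ intertwines the componentwise product on $U(\mathcal M (E))\times U(Z(\mathcal M (E)))$ with composition in $\mathfrak G$.

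Finally, $\zeta(u,z)=\mathrm{id}$ holds exactly when $\theta_{u,z}$ fixes $v,e,e^*$, that is, when $uv=vu$ for all $v\in E^0$, $ueu^{-1}=ez^{-1}$ and $ue^*u^{-1}=ze^*$ for all $e\in E^1$. If $u\in Z(\mathcal M (E))$ and $ueu^{-1}=ez^{-1}$, the first relation is automatic, and the third follows by expanding $u(ee^*)=(ee^*)u$ using $ue=ez^{-1}u$ and then left-multiplying by $e^*$ (invoking $r(e)e^*=e^*$ and the centrality of $u$ and $z$); hence $(u,z)\in\ker\zeta$. Conversely, if $\theta_{u,z}=\mathrm{id}$, iterating $ue=ez^{-1}u$ and $ue^*=ze^*u$ along a path $\gamma$ of length $n$ gives $u\gamma=z^{-n}\gamma u$ and $u\gamma^*=z^{n}\gamma^*u$, so $u(\gamma\mu^*)=z^{|\mu|-|\gamma|}(\gamma\mu^*)u=(\gamma\mu^*)u$ whenever $|\gamma|=|\mu|$; since such elements span $\mathcal M (E)$ we get $u\in Z(\mathcal M (E))$, and $ueu^{-1}=ez^{-1}$ holds by assumption. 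This yields the asserted description of $\ker\zeta$. The main obstacle I foresee is step (i): showing $\theta_{u,z}$ is genuinely an automorphism and not merely an injective endomorphism (in contrast with the general $\theta_{u,z}$ of the preceding discussion), and the key to it is precisely the observation that centrality of $z$ upgrades $\theta_{u,z}|_{\mathcal M (E)}$ from locally inner to inner, which simultaneously places $L(E)_0$ in the image and lets one invert the formulas for $\theta_{u,z}(t_\pm)$.
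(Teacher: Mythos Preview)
Your argument is correct and follows essentially the same route as the paper: define $\zeta(u,z)=\theta_{u,z}$, observe that centrality of $z$ makes $\theta_{u,z}|_{\mathcal M(E)}$ equal to conjugation by $u$, deduce the multiplicativity $\theta_{u_1,z_1}\circ\theta_{u_2,z_2}=\theta_{u_1u_2,z_1z_2}$, and read off the kernel from the action on generators. The one difference is in how surjectivity is obtained: you prove it directly by exhibiting $t_\pm$ in the image, while the paper simply notes that the multiplicativity relation together with $\theta_{1,1}=\mathrm{id}$ already gives each $\theta_{u,z}$ a two-sided inverse $\theta_{u^{-1},z^{-1}}$, so bijectivity is automatic. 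This makes what you flag as ``the main obstacle'' disappear once step~(ii) is done; conversely, your explicit argument has the virtue of identifying the inverse concretely. In the converse kernel direction you recompute $u\in Z(\mathcal M(E))$ from scratch via paths, but this is already immediate from your earlier observation that $\theta_{u,z}|_{\mathcal M(E)}$ is conjugation by $u$.
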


\begin{proof}
Define $\zeta  \colon U(\mathcal M (E))\times U(Z(\mathcal M(E))) \to \mathfrak G $ by $\zeta (u,z)= \theta _{u,z}$.
It is easy to check that for $(u,z)\in U(\mathcal M (E))\times U(Z(\mathcal M(E)))$ we have
$\theta _{u,z }(x) = uxu^{-1}$ for all $x\in \mathcal M (E)$.
Using this it is straightforward to check that $\theta _{u_1,z_1 }\circ \theta _{u_2,z_2}= \theta _{u_1u_2, z_1z_2}$
for $(u_1,z_1), (u_2,z_2) \in U(\mathcal M (E))\times U(Z(\mathcal M(E)))$.
This shows that $\zeta $ is a group homomorphism. If $(u,z)$ belongs to the kernel of $\zeta$, then
$u$ must belong to the center of $\mathcal M (E)$ by the previous observation, and $ueu^{-1}= ez^{-1}$.
Conversely if these conditions are satisfied then for $e\in E^1$ we have
$$z e^*= (ze^*) (ueu^{-1}(ue^*u^{-1}) = z(e^*e)z^{-1} (ue^*u^{-1}) = e^*eue^*u^{-1}= ue^*u^{-1}\, ,$$
so that $ue^*u^{-1} = ze^*$ and we conclude that $\theta _{u,z}=  \text{id}$.
\end{proof}

We now compute the group $\mathfrak G$ in a particular example. This gives a counterexample to the
uniqueness part of \cite[Conjecture 3]{Hazrat}, in the sense that we obtain a non-inner graded automorphism $\varphi $ of
a Leavitt path algebra such that $K_0^{gr}(\varphi) = \text{Id}$, due to the presence of the torus action. Indeed, the same
obstruction to innerness appears already for the Leavitt path
algebra of the single loop $K[x,x^{-1}]$.

\begin{exem}{\rm
 Let
 $$\mathfrak T := K\langle x,y \mid yx=1 \rangle $$
 be the (algebraic) Toeplitz algebra, which can be realized
 as the Leavitt path algebra associated to the graph $E$ with $E^0=\{ v,w \}$
 and $E^1=\{\alpha, \beta \}$, with $s(\alpha ) = r(\alpha ) = v= s(\beta )$, and $r(\beta ) = w$.
 The algebra $L(E)$ is isomorphic to $\mathfrak T$ via an isomorphism that sends $\alpha +\beta $ to $x$ and $\alpha^*+\beta^*$
 to $y$.

 Define $e_{ij}= x^i(1-xy)y^j$ for $i,j\ge 0$. This is a set of matrix units in $\mathfrak T $, the ideal $I=\bigoplus _{i,j} e_{ij}K\cong M_{\infty} (K)$
 is the only non-trivial ideal of $\mathfrak T$ and $\mathfrak T/I\cong K[x,x^{-1}]$.
The $0$-component of $\mathfrak T$ is precisely
 $$\mathfrak T _0 =\{ \sum _{i=0}^n a_ie_{ii}+a_{n+1}x^{n+1}y^{n+1} \mid a_0,\dots ,a_{n+1}\in K, n\ge 0 \}\cong K\cdot 1 \oplus (\bigoplus _{\Z^+} K ).$$
 In particular, $\mathfrak T _0$ is a commutative algebra, and so it follows from Theorem \ref{theor:realizingiso} that
 $\mathfrak G \cong \zeta (U(\mathfrak T _0)\times U (\mathfrak T _0))$.
 Now, it is shown in \cite[Theorem 4.3]{Bavula} that
 $$\text{Aut}_{K-\text{alg}}(\mathfrak T) \cong \mathbb T \rtimes \text{Inn}(\mathfrak T) \cong \mathbb T \rtimes GL_{\infty} (K).$$
 Here $\mathbb T$ denotes the ``torus action'' of $K^{\times}$ on $\mathfrak T$, which, for $a\in K^{\times}$, sends $x$ to $ax$ and
 $y$ to $a^{-1}y$. Note that this is precisely $\zeta ((1,a\cdot 1 ))\in \mathfrak G$. The group $GL_{\infty}(K)$ is the group of units
 of the multiplicative monoid $1+M_{\infty}(K)$, and it acts by conjugation on $\mathfrak T$. Denote by $D_{\infty} (K)$ the group of units of
 the monoid $1+(\bigoplus _{i=0}^{\infty} e_ {ii}K)$. For $u = \sum _{i=0}^n a_ie_{ii}+x^{n+1}y^{n+1}\in D_{\infty}(K)$, we have the formula
 $$uxu^{-1} = x(\sum _{i=0}^n a_i^{-1} a_{i+1}e_{ii} + x^{n+1}y^{n+1} ) ,$$
 (where $a_{n+1}=1$). It follows from this that
 $$\zeta (1,D_{\infty}(K)) = \zeta (U(\mathfrak T _0), 1) = D_{\infty}(K)\subset GL_{\infty}(K) \subseteq \text{Inn} (\mathfrak T ) ,$$
 which shows that
 $$\mathfrak G = \mathbb T \times D_{\infty}(K).$$
An element in $\mathfrak G $ is an inner automorphism of $\mathfrak T$ if and only its $\mathbb T$-component is $1$.
}
\end{exem}

\section*{Acknowledgments}

Part of this work was done during a visit of the second author to the Centre de Recerca Matem\`atica
(U.A.B., Spain) in the context of the Research Program ``The Cuntz semigroup and the
classification of $C^*$-algebras''. The second author wants to thank the host center for its
warm hospitality, and also to George Elliott for some interesting discussions about the topic of the paper.

 We also thank the anonymous referee for carefully reading the paper and for providing many helpful suggestions.


\begin{thebibliography}{99}
\bibitem{AA1} \textsc{G. Abrams, G. Aranda Pino}, The Leavitt path algebra of a graph,
\emph{J. Algebra} \textbf{293} (2005), 319--334.

\bibitem{AA2} \textsc{G. Abrams, G. Aranda Pino}, Purely infinite simple Leavitt path
algebras, \emph{J. Pure Appl. Algebra}, \textbf{207} (2006),
553--563.

\bibitem{AbAnhLouP2} \textsc{G. Abrams, P. N. \'{A}nh, A. Louly, E. Pardo}, Flow invariants in the classification of Leavitt algebras, \emph{J. Algebra} \textbf{333} (2011), 202--231.

\bibitem{ABC} \textsc{P. Ara, M. Brustenga, G. Corti\~{n}as}, K-theory of Leavitt path algebras, \emph{M\"unster J. Math.} \textbf{2} (2009), 5--33.

\bibitem{AC} \textsc{P. Ara, G. Corti\~{n}as}, Tensor products of Leavitt path algebras, \emph{Proc. Amer. Math. Soc.} \textbf{141} (2013), 2629--2639.

\bibitem{AGGP} \textsc{P. Ara, M.A. Gonz\'alez-Barroso, K.R. Goodearl, E. Pardo}, Fractional skew monoid rings.  J. Algebra  \textbf{278}  (2004),  no. 1, 104--126.

\bibitem{AMFP} \textsc{P. Ara, M.A. Moreno, E. Pardo}, Nonstable K-Theory for graph algebras,
\emph{Algebra Rep. Th.} \textbf{10} (2007), 157-178.

\bibitem{AP} \textsc{P. Ara,  E. Pardo}, Stable rank of Leavitt path algebras,
\emph{Proc. Amer. Math. Soc.} \textbf{136} (2008), 2375--2386.

\bibitem{APS} \textsc{G. Aranda Pino, E. Pardo, M. Siles Molina}, Exchange Leavitt path algebras
and stable rank, \emph{J. Algebra}  \textbf{305}  (2006), 912--936.

\bibitem{Work} \textsc{G. Aranda Pino, F. Perera, M. Siles Molina, eds.}, \emph{Graph algebras: bridging the gap between analysis and algebra},
ISBN: 978-84-9747-177-0, University of M\'{a}laga Press, M\'alaga, Spain (2007).

\bibitem{Bates} \textsc{T. Bates}, Applications of the gauge-invariant uniqueness theorem for the Cuntz-Krieger algebras of directed graphs, \emph{Bull. Austral. Math. Soc.} \textbf{65} (2002),  57--67.

\bibitem{Bavula} \textsc{V. V. Bavula}, The group of automorphisms of the algebra of polynomial integro-differential operators,
\emph{J. Algebra} \textbf{348} (2011), 233--263.

\bibitem{Franks} \textsc{J. Franks}, Flow equivalence of subshifts of finite type, \emph{Ergod. Th. \& Dynam. Sys.} \textbf{4} (1984), 53--66.

\bibitem{vnrr}\textsc{K. R. Goodearl} ``Von Neumann Regular Rings",
Second Edition, Krieger, Malabar 1991.

\bibitem{GreenGordon}  \textsc{R. Gordon, E. L. Green}, Graded Artin algebras. \emph{J. Algebra} \textbf{76} (1982), 111--137.

\bibitem{Hazrat} \textsc{R. Hazrat}, The graded Grothendieck group and the classification of Leavitt path algebras, \emph{Math. Annalen}
\textbf{355} (2013), 273--325.

\bibitem{Hazrat2} \textsc{R. Hazrat}, The dynamics of Leavitt path algebras, \emph{J. Algebra} \textbf{384} (2013), 242--266.

\bibitem{HazMon}\textsc{R. Hazrat}, Graded Rings and graded Grothendieck groups. Monograph, rhazrat.wordpress.com/preprint.

\bibitem{Kirch} \textsc{E. Kirchberg}, The classification of purely infinite C*-algebras using Kasparov theory, preprint.

\bibitem{Le}  \textsc{W.G. Leavitt}, The module type of a ring, \emph{Trans. Amer. Math. Soc.} \textbf{103} (1962), 113--130.

\bibitem{L-M} \textsc{D. Lind, B. Marcus}, ``An Introduction to Symbolic Dynamics and
Coding'', Cambridge Univ. Press, Cambridge, 1995, Reprinted 1999
(with corrections). ISBN 0-521-55900-6.

\bibitem{matsumoto} \textsc{K. Matsumoto}, Classification of Cuntz-Krieger algebras by orbit equivalence of topological Markov shifts, \emph{Proc. Amer. Math. Soc.} \textbf{141} (2013), 2329--2342.

\bibitem{matsumoto2} \textsc{K. Matsumoto}, K-groups of the full group actions on one-sided topological Markov shifts, \emph{Discrete Contin. Dyn. Syst.}  \textbf{33} (2013), 3753--3765.

\bibitem{Nasta_VanOs} \textsc{C. N\u{a}st\u{a}sescu, F. Van Oystaeyen}, ``Methods of graded rings'', Lecture Notes in Mathematics, 1836, Springer Verlag, Berlin, 2004.

\bibitem{Paschke} \textsc{W.L. Paschke}, The crossed product of a
C*-algebra by an endomorphism, \emph{Proc. Amer. Math. Soc.}
\textbf{80} (1980), 113--118.

\bibitem{Phil} \textsc{N.C. Phillips}, A classification theorem for nuclear purely infinite simple C*-algebras, \emph{Doc. Math.} \textbf{5} (2000), 49-114.

\bibitem{Raeburn} \textsc{I. Raeburn},
"Graph Algebras", CBMS Reg. Conf. Ser. Math., vol. 103, Amer. Math.
Soc., Providence, RI, 2005.

\bibitem{Rordam} \textsc{M. R{\o}rdam}, Classification of
Cuntz-Krieger algebras, \emph{K-Theory} \textbf{9} (1995), 31--58.


 \bibitem{SPaulSmith}  \textsc{S. Paul Smith}, Category equivalences involving graded modules over path algebras of quivers, \emph{Adv. Math.} \textbf{230} (2012), 1780--1810.

\bibitem{Tomforde} \textsc{M. Tomforde}, Uniqueness theorems and ideal structure for Leavitt path algebras, \emph{J. Algebra} \textbf{318} (2007), no. 1, 270--299.

\bibitem{W} \textsc{R.F. Williams}, Classification of subshifts of finite type, \emph{Ann. Math.}  \textbf{98(2)} (1973), 120--153.

\end{thebibliography}
\end{document}